\newtheorem{theorem}{Theorem}[section]
\newtheorem{lemma}[theorem]{Lemma}
\newtheorem{proposition}[theorem]{Proposition}
\newcommand{\N}{\mathbb N}
\newcommand{\Z}{\mathbb Z}
\newcommand{\R}{\mathbb R}
\newcommand{\Q}{\mathbb Q}
\DeclareMathOperator{\ord}{ord}
\DeclareMathOperator{\supp}{Supp}
\newcommand{\la}{\langle}
\newcommand{\ra}{\rangle}
\newcommand{\be}{\begin{equation}}
\newcommand{\ee}{\end{equation}}
\newcommand{\und}{\;\mbox{ and }\;}
\newcommand{\nn}{\nonumber}
\newcommand{\ber}{\begin{eqnarray}}
\newcommand{\eer}{\end{eqnarray}}
\newcommand{\Sum}[2]{\underset{#1}{\overset{#2}{\sum}}}
\newcommand{\Summ}[1]{\underset{#1}{\sum}}
\newcommand{\vp}{\mathsf v}
\DeclareSymbolFont{goo}{OMS}{cmsy}{b}{n}
\DeclareMathSymbol{\gooT}{\mathalpha}{goo}{"1}
\newcommand{\bdot}{\mathbin{\gooT}}
\begin{document}

\title[A Generalization of the  Chevalley-Warning and Ax-Katz Theorems]{
A Generalization of the  Chevalley-Warning and Ax-Katz Theorems with a View Towards Combinatorial Number Theory}

\author{David J. Grynkiewicz}
\address{Department of Mathematical Sciences\\ University of Memphis\\ Memphis, TN 38152\\
USA}
\email{diambri@hotmail.com}

\subjclass[2020]{11T06, 11B75, 20D60, 11G25}
\keywords{Chevalley-Warning Theorem, Ax-Katz Theorem, Zero-Sum, Erd\H{o}s-Ginzburg-Ziv, Davenport Constant}

\begin{abstract}Let $\mathbb F_q$ be a finite field of characteristic $p$ and order $q$. The Chevalley-Warning Theorem is a classical result which asserts that the set $V$ of common zeros of a collection of polynomials  must satisfy $|V|\equiv 0\mod p$, provided the number of variables is sufficiently large with respect to the degrees of the polynomials. The Ax-Katz Theorem generalizes this by giving tight bounds for higher order $p$-divisibility for $|V|$. Besides the intrinsic algebraic interest of these results, they are also important tools in the Polynomial Method, particularly in the prime field  case $\mathbb F_p$, where they have been used to prove many results in Combinatorial Number Theory.
In this paper, we begin by explaining how arguments used by R. Wilson to give an elementary proof of the $\mathbb F_p$ case for the Ax-Katz Theorem can also be used to prove the following generalization of the Chevalley-Warning and Ax-Katz Theorems for $\mathbb F_p$, where we allow varying prime power moduli.  Given any box $\mathcal B=\mathcal I_1\times\ldots\times\mathcal I_n$, with each $\mathcal I_j\subseteq\Z$ a complete system of residues modulo $p$, and a collection of nonzero polynomials $f_1,\ldots,f_s\in \mathbb Z[X_1,\ldots,X_n]$, then the set of common zeros inside the box,
$$V=\{\mathbf a\in \mathcal B:\; f_1(\textbf a)\equiv 0\mod p^{m_1},\ldots,f_s(\textbf a)\equiv 0\mod p^{m_s}\},$$ satisfies $|V|\equiv 0\mod p^m$, provided $n>(m-1)\max_{i\in [1,s]}\Big\{p^{m_i-1}\deg f_i\Big\}+ \Sum{i=1}{s}\frac{p^{m_i}-1}{p-1}\deg f_i.$
The introduction of the box $\mathcal B$ adds a degree of flexibility, in comparison to prior work of Zhi-Wei Sun. Indeed, incorporating the ideas of Sun, a weighted version of the above result is given.
We continue by explaining how the added flexibility, combined with an appropriate use of Hensel's Lemma to choose the complete system of residues $\mathcal I_j$, effectively allows many  combinatorial applications of the Chevalley-Warning and Ax-Katz Theorems, previously only valid for $\mathbb F_p^n$, to extend with bare minimal modification to validity for an arbitrary finite abelian $p$-group $G$.  We illustrate this be giving several examples, including a new proof of the exact value of the Davenport Constant $\mathsf D(G)$ for finite abelian $p$-groups, and a streamlined proof of the Kemnitz Conjecture. We also derive some new results, for a finite abelian $p$-group $G$ with exponent $q$, regarding the constant $\mathsf s_{kq}(G)$, defined as the minimal integer $\ell$ such that any sequence of $\ell$ terms from $G$ must contain a zero-sum subsequence of length $kq$. Among other results for this constant, we show that $\mathsf s_{kq}(G)\leq kq+\mathsf D(G)-1$ provided $k>\frac{d(d-1)}{2}$ and $p> d(d-1)$, where $d=\left \lceil \frac{\mathsf D(G)}{q}\right\rceil$, answering  a problem of   Xiaoyu He in the affirmative by removing all dependance on $p$ from the bound for $k$.
\end{abstract}

\maketitle

\section{Introduction and Notation}

\subsection{Basic Notation}
Let $\mathbb N_0=\{0,1,2,\ldots\}$ and $\mathbb N=\{1,2,\ldots,\}$, and  let $\mathbb F_q$ denote the finite field of order $q$, whose characteristic must then be a prime $p\geq 2$ with $q$ a power of $p$. For a commutative ring $R$, we let $R[X_1,\ldots,X_n]$ denote the polynomial ring in the variables $X_1,\ldots,X_n$ with coefficients from $R$, and we often use $\textbf x=(X_1,\ldots,X_n)$ to denote the tuple of variable inputs.  Each $f\in R[X_1,\ldots,X_n]$ is then a finite sum of \emph{monomials} $f(\textbf x)=\Summ{(k_1,\ldots,k_n)\in \mathbb N_0^n} c_{k_1,\ldots,k_n}X_1^{k_1}\cdots X_n^{k_n}$ with coefficients $c_{k_1,\ldots,k_n}\in R$. The monomials that occur in $f$ are then the summands with $c_{k_1,\ldots,k_n}\neq 0$.  The \emph{degree} of $f$ is denoted $\deg f$ and is the maximal value of $k_1+\ldots+k_n$ as we range over all tuples $(k_1,\ldots,k_n)\in \mathbb N_0^n$ with $c_{k_1,\ldots,k_n}\neq 0$. The zero-polynomial $f=0$ has $\deg f=-1$ by convention. For $j\in[1,n]$, we use $\deg_j f$ to denote the degree of $f$ in the $j$-th variable $X_j$. Throughout the paper, the expression $0^0:=1$, being interpreted as the constant polynomial $X^0=1$ evaluated at $0$. A polynomial $f\in \mathbb Q[X_1,\ldots,X_n]$ is called an \emph{integer valued polynomial} if $f(\textbf a)\in \Z$ for all $\textbf a\in \Z^n$. We use $\mathsf{Int}(\Z)$ to denote the set of all integer valued polynomials $f\in \Q[X]$.
An integer valued map $f:\Z\rightarrow \Z$ is \emph{periodic }with period $m$ if $f(x+m)=f(x)$ for all $x\in \Z$. All intervals are discrete, so $[a,b]=\{x\in\Z:a\leq x\leq b\}$ for $a,\,b\in\R$, and variables introduced with an inequality are assumed to be integers. Given an integer $m\geq 1$, a \emph{complete system of residues modulo $m$} is a set $\mathcal I\subseteq \Z$ with $|\mathcal I|=m$ whose elements are distinct modulo $m$, i.e., $\mathcal I$  contains exactly one representative for every residue class modulo $m$. We use $\varphi$ to denote the Euler totient function, so $\varphi(n)$ is the number of elements $x\in [1,n]$ that are relatively prime to the integer $n\geq 1$. In particular, $$\varphi(1)=1\quad\und\quad \varphi(q)=\frac{(p-1)q}{p}$$ for a prime power $q=p^s>1$. Given a prime $p\geq 2$ and $x\in \Z$, we let $\vp_p(x)$ denote the $p$-adic valuation of $x$, which is simply the multiplicity of the prime $p$ in the prime factorization of $x$, and we extend this for $x=\frac{a}{b}\in \Q$ with $a,\,b\in \Z$ by the standard definition $\vp_p(x)=\vp_p(a)-\vp_p(b)$. For an element $X$ in a commutative ring containing $\Q$, the binomial coefficient is defined as $$\binom{X}{n}=\frac{X(X-1)\cdots(X-n+1)}{n!},$$ with $\binom{X}{0}:=1$. If $x\in \mathbb N_0$ is an integer, then $\binom{x}{n}$ counts the number of ways to choose $n$ elements from a set of size $x$, and is thus an integer. Moreover, $\binom{x}{n}=0$ for $x\in\mathbb N_0$ and  $n>x$.

\subsection{Introduction}

The study of the  common roots of a collection of polynomials $f_1,\ldots,f_s\in R[X_1,\ldots,X_n]$ is a classical object of study in Commutative Algebra. When $R=\mathbb F_q$ is a finite field of characteristic $p$, one of the most well-known such results is the Chevalley-Warning Theorem \cite{G-book} \cite{nat-book} \cite{Tao-Vu-book}.

\begin{theorem}[Chevalley-Warning Theorem (1936)]\label{thm-chev-warning} Let $\mathbb F_q$ be a finite field of characteristic $p$, let $f_1,\ldots,f_s\in \mathbb F_q[X_1,\ldots,X_n]$ be nonzero polynomials, where $s\geq 1$,  and let $$V=\{\textbf a\in \mathbb F_q^n:\; f_1(\textbf a)=0,\ldots,f_s(\textbf a)=0\}.$$ If $n>\Sum{i=1}{s}\deg f_i$, then $|V|\equiv 0\mod p$.
\end{theorem}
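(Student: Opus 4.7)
The plan is to use the classical indicator-polynomial argument. Define
$$F(\mathbf x)=\prod_{i=1}^{s}\bigl(1-f_i(\mathbf x)^{q-1}\bigr)\in \mathbb F_q[X_1,\ldots,X_n].$$
By Fermat's little theorem in $\mathbb F_q$, every $c\in \mathbb F_q$ satisfies $c^{q-1}=1$ if $c\neq 0$ and $c^{q-1}=0$ if $c=0$. Consequently, for every $\mathbf a\in\mathbb F_q^n$ the factor corresponding to $f_i$ equals $1$ when $f_i(\mathbf a)=0$ and equals $0$ otherwise, so $F$ is the characteristic function of $V$. Therefore, viewed in $\mathbb F_q$,
$$|V|\cdot 1_{\mathbb F_q}=\Summ{\mathbf a\in\mathbb F_q^n} F(\mathbf a),$$
and since the characteristic of $\mathbb F_q$ is $p$, it suffices to prove that the right-hand side vanishes in $\mathbb F_q$.

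Next I bound the degree: $\deg F\leq (q-1)\sum_{i=1}^{s}\deg f_i<(q-1)n$ by the hypothesis $n>\sum_i \deg f_i$. The problem is thereby reduced to the following sub-lemma: for every $g\in \mathbb F_q[X_1,\ldots,X_n]$ with $\deg g<n(q-1)$, one has $\sum_{\mathbf a\in \mathbb F_q^n}g(\mathbf a)=0$ in $\mathbb F_q$. By linearity of the sum over $\mathbf a$, it suffices to verify this for a single monomial $X_1^{k_1}\cdots X_n^{k_n}$ with $k_1+\ldots+k_n<n(q-1)$. For such a monomial,
$$\Summ{\mathbf a\in\mathbb F_q^n} a_1^{k_1}\cdots a_n^{k_n}=\prod_{j=1}^{n}\Summ{a\in\mathbb F_q}a^{k_j},$$
and by the pigeonhole principle at least one exponent $k_{j_0}$ satisfies $k_{j_0}\leq q-2$.

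It then remains to show $\sum_{a\in\mathbb F_q}a^{k}=0$ in $\mathbb F_q$ whenever $0\leq k\leq q-2$, under the convention $0^0=1$. For $k=0$ the sum is $q\cdot 1_{\mathbb F_q}=0$ since $\mathrm{char}(\mathbb F_q)=p\mid q$. For $1\leq k\leq q-2$, fix a generator $\gamma$ of the cyclic group $\mathbb F_q^{*}$; then $\gamma^{k}\neq 1$, and
$$\Summ{a\in\mathbb F_q}a^{k}=\Summ{a\in\mathbb F_q^{*}}a^{k}=\Sum{i=0}{q-2}\gamma^{ik}=\frac{\gamma^{(q-1)k}-1}{\gamma^{k}-1}=0.$$
Combining these observations gives the sub-lemma, hence $\sum_{\mathbf a}F(\mathbf a)=0$ in $\mathbb F_q$, which forces $|V|\equiv 0\pmod p$.

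The main conceptual ingredient is the power-sum identity for $\mathbb F_q$, which is where the characteristic $p$ enters in an essential way; the rest is degree bookkeeping. I do not foresee a serious obstacle, but the one subtle point is to make sure the degree bound on $F$ is \emph{strictly} less than $n(q-1)$, so that the pigeonhole step produces an exponent $k_{j_0}<q-1$ for every monomial appearing in $F$ --- this is exactly what the hypothesis $n>\sum_i\deg f_i$ guarantees.
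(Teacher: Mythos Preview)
Your proof is correct; it is the classical indicator-polynomial argument due to Chevalley and Warning, and every step is sound.

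Note, however, that the paper does not actually give its own proof of this theorem: Theorem~\ref{thm-chev-warning} is stated as background and attributed to the literature. What the paper does prove is the generalization in Theorem~\ref{thm-axkatz-gen} (for the prime-field case $\mathbb F_p$ only), via Wilson's method: the Weighted Weisman--Fleck congruence and Newton expansion are used to build approximating polynomials, and then Lemma~\ref{lemm-wilson-ax-katz-summation-gen} and Lemma~\ref{lemm-wilson-ax-katz-summation} control the sums. Your sub-lemma --- that $\sum_{\mathbf a}g(\mathbf a)=0$ whenever each variable has degree at most $q-2$ --- is exactly what the paper isolates (for $\mathbb F_p$, with the extra twist of an arbitrary box $\mathcal B$ and a $p$-adic conclusion) as Lemma~\ref{lemm-wilson-ax-katz-summation-gen}, and the paper explicitly calls this ``a variation on Chevalley's key observation.'' So your argument and the paper's machinery share the same core power-sum identity (Lemma~\ref{lem-sumofpowers}); the difference is that your route gives the full $\mathbb F_q$ statement directly and cheaply, whereas the paper's route trades the general $\mathbb F_q$ case for much stronger $p$-adic divisibility information over $\mathbb F_p$ with varying moduli and weights.
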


As a particular case, if there is one common zero for the polynomials $f_1,\ldots,f_s$, then there must be at least one nontrivial zero, which was the original result of Chevalley \cite{chevalley}, afterwards generalized by Warning \cite{warning}.  Later, the higher order $p$-divisibility of $|V|$ was considered by Ax \cite{Ax} (for $s=1$) and then  for general $s$ by Katz \cite{Katz}, resulting in what is known as the Ax-Katz Theorem.

\begin{theorem}[Ax-Katz Theorem (1971)]\label{thm-ax-katz} Let $\mathbb F_q$ be a finite field of characteristic $p$, let $f_1,\ldots,f_s\in \mathbb F_q[X_1,\ldots,X_n]$ be nonzero polynomials, where $s\geq 1$,  and let $$V=\{\textbf a\in \mathbb F_q^n:\; f_1(\textbf a)=0,\ldots,f_s(\textbf a)=0\}.$$ If $n>(m-1)\max_{i\in [1,s]}\{\deg f_i\}+\Sum{i=1}{s}\deg f_i$, where $m\geq 1$, then $|V|\equiv 0\mod p^m$.
\end{theorem}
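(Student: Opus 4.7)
The plan is to prove Theorem~\ref{thm-ax-katz} via the classical additive character sum approach of Ax, combined with Stickelberger's $p$-adic congruence for Gauss sums, which is the ingredient Katz refined to handle arbitrary $s$. First, I would fix a nontrivial additive character $\psi:\mathbb F_q\to \C^\times$, and use orthogonality of additive characters to write
\[
q^s|V|=\sum_{\textbf{t}\in\mathbb F_q^s}S(\textbf{t}),\qquad S(\textbf{t}):=\sum_{\textbf{a}\in\mathbb F_q^n}\psi\bigl(t_1f_1(\textbf{a})+\cdots+t_sf_s(\textbf{a})\bigr).
\]
The trivial term $\textbf{t}=\textbf{0}$ contributes $q^n$, so modulo $q^sp^m$ the conclusion $p^m\mid|V|$ reduces to showing $\vp_p(S(\textbf{t}))\geq s\,\vp_p(q)+m$ for every $\textbf{t}\neq\textbf{0}$, the divisibility of $q^{n-s}$ by $p^m$ being automatic since the degree hypothesis readily forces $n\geq s+m$.

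For the $p$-adic bound on $S(\textbf{t})$, I would Fourier-expand each coordinate of $\textbf{a}$ through the pairing of $\mathbb F_q^\times$ with its multiplicative character group, writing $S(\textbf{t})$ as a $\Z$-linear combination, indexed by character tuples $(\chi_1,\ldots,\chi_n)$, of products of Gauss sums $g(\chi_1)\cdots g(\chi_n)$. Stickelberger's theorem then supplies the exact formula $\vp_p(g(\chi))=\sigma_p(a_\chi)/(p-1)$, where $a_\chi\in[1,q-1]$ is the integer encoding $\chi$ and $\sigma_p$ denotes its base-$p$ digit sum. Only those character tuples compatible with a monomial occurring in some power of $\sum_it_if_i$ contribute, imposing constraints of the form $\sum_j a_{\chi_j}\equiv 0\pmod{q-1}$ together with degree bounds governed by the individual $\deg f_i$. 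Using these constraints and the subadditivity $\sigma_p(a+b)\leq\sigma_p(a)+\sigma_p(b)$ of digit sums, one would show $\sum_j\sigma_p(a_{\chi_j})$ exceeds $(p-1)(s+m)\vp_p(q)$ precisely when $n>(m-1)\max_i\deg f_i+\sum_i\deg f_i$, yielding the required valuation bound on $S(\textbf{t})$.

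The main obstacle is the combinatorial optimization of this digit-sum bound: one must minimize $\sum_j\sigma_p(a_{\chi_j})$ over all admissible character tuples and match the resulting threshold sharply to the Ax-Katz hypothesis. Ax handled the case $s=1$ through a Newton polygon estimate on a single Artin--Schreier cover, and Katz's refinement to general $s$ is precisely the delicate combinatorial piece that produces $n>(m-1)\max_i\deg f_i+\sum_i\deg f_i$ rather than the weaker bound $n>m\sum_i\deg f_i$ one would obtain by naive summation. In the $\mathbb F_p$ case $(q=p)$ that is the focus of the remainder of the paper, an elementary alternative due to R.~Wilson is available: work with the integer lift of the polynomial indicator $\prod_{i=1}^s(1-f_i(\textbf{x})^{p-1})$, sum over a complete residue system in $\Z^n$, and replace Stickelberger by sharp $p$-adic estimates for the one-variable power sums $\sum_{a\in\mathcal I}a^k$; this is the approach the paper subsequently generalizes.
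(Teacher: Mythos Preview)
The paper does not actually prove Theorem~\ref{thm-ax-katz}: it is quoted in the introduction as the classical result of Ax and Katz, with references, and serves purely as background. What the paper does prove is Theorem~\ref{thm-axkatz-gen}, a weighted mixed-modulus generalization valid only over the prime field $\mathbb F_p$, and it remarks that specializing $w_i=1$ and $m_i=1$ recovers the $\mathbb F_p$ case of Theorem~\ref{thm-ax-katz}. So there is no ``paper's own proof'' of the full $\mathbb F_q$ statement for you to match.

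That said, your outline is a reasonable sketch of the classical Ax--Katz route via additive characters and Stickelberger's congruence for Gauss sums, and you correctly identify the hard step as the digit-sum optimization that distinguishes Katz's bound from the naive $n>m\sum_i\deg f_i$. You honestly flag this as an obstacle rather than carrying it out, so the proposal is more a plan than a proof. The paper's approach, by contrast, is entirely different: it follows Wilson's elementary method (Weighted Weisman--Fleck congruences, Newton expansion, Lemmas~\ref{lemm-wilson-ax-katz-summation-gen} and~\ref{lemm-wilson-ax-katz-summation}), never touching characters or Gauss sums, and is restricted to $\mathbb F_p$. Your final paragraph already points at exactly this alternative, so you are aware of the dichotomy. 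If your goal were to align with the paper, you would drop the character-sum machinery entirely and work through Wilson's argument; if your goal were to prove the full $\mathbb F_q$ statement, your sketch is the right direction but would need the Katz combinatorics made explicit.
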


Both the Chevalley-Warning and Ax-Katz Theorems have attracted considerable attention in Commutative Algebra, including many extensions, refinements, variants and alternative proofs. See \cite{Aichinger-chewarning} \cite{brink-chevwarning-restricted}  \cite{clark-gen-che-warning} \cite{cao-ax-katz-gen} \cite{cao-sun-ax-katz-variant} \cite{castro-ax-katz-var} \cite{clark-boundary-chev-warning} \cite{clark-warning-restr} \cite{Hou-reduction-arg-ax-katz} \cite{moreno-moreno} \cite{moreno-shum-chev-warinng-var} \cite{Wan-altproof-Ax-Katz} \cite{wan-altproof-Ax-katzII} \cite{Wilson-ax-katz} for a handful of such instances among many more. However, the interest in these results extends much further, also   to areas such as Discrete Mathematics, where they form a standard tool in the ``Polynomial Method.'' Here, the interest lies not directly in the results themselves but rather in  what  other results can be proved by their usage in combination with appropriately chosen polynomials.
For such reasons, the Chevalley-Warning Theorem is often found in many texts on Additive Combinatorics, e.g. \cite{G-book} \cite{nat-book} \cite{Tao-Vu-book}, and is an indispensable tool in many parts of Combinatorics. As this will be a prime focus in this paper, we will shortly see concrete examples of how this works. Worth noting, regarding the use of the Ax-Katz and Chevalley-Warning Theorems in Discrete Mathematics, the case $\mathbb F_p$ is the main
focus of interest, and thus for this paper as well.

Despite the rather elementary formulation of the Ax-Katz Theorem, most proofs are rather non-elementary, to varying extents. Perhaps the most elementary proof, though only valid for  $\mathbb F_p$, was given by Wilson \cite{Wilson-ax-katz}. His interest was primarily in using the method he developed to give striking applications in Coding Theory, and while his work received some attention in Coding Theory, it's importance outside Coding Theory seems not  fully realized. The first part of this paper is devoted to detailing how the method of Wilson readily adapts to prove the following generalization of the Ax-Katz and Chevalley-Warning Theorems, where we are allowed to consider polynomial equations modulo varying prime powers $p^{m_i}$.

\begin{theorem}\label{thm-axkatz-gen} Let $p\geq 2$ be prime, let $n\geq 1$ and $\mathcal B=\mathcal I_1\times \ldots\times \mathcal I_n$ with each $\mathcal I_j\subseteq \Z$ for $j\in[1,n]$  a complete system of residues  modulo  $p$, let $s\geq 1$ and  $m_1,\ldots,m_s\geq 0$ be integers, let $f_1,\ldots, f_s\in \mathbb \Z[X_1,\ldots,X_n]$ be nonzero polynomials, let $w_1(X),\ldots,w_s(X)\in\Q[X]$ be integer valued polynomials with respective degrees $t_1,\ldots,t_s\geq 0$, and let \begin{align*}V=\{\textbf a\in \mathcal B:\; f_i(\textbf a)\equiv 0\mod p^{m_i}\mbox{ for all $i\in [1,s]$}\}\quad\und \quad N=\Summ{\textbf a\in V}\prod_{i=1}^{s}w_i\Big(
\frac{f_i(\textbf a)}{p^{m_i}}\Big).\end{align*}
 If $n>(m-1)\max_{i\in [1,s]}\Big\{1,\;\frac{\varphi(p^{m_i})}{p-1}\deg f_i\Big\}+ \Sum{i=1}{s}\frac{(t_i+1)p^{m_i}-1}{p-1}\deg f_i,$ where $m\geq 1$ and $\varphi$ denotes the Euler totient function, then
$$N\equiv 0\mod p^m.$$
\end{theorem}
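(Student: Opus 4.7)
\emph{Proof plan.} The plan is to follow Wilson's elementary approach to the Ax-Katz theorem, combined with Sun's weighted framework. The target is to rewrite $N$ modulo $p^m$ as a sum over the full box $\mathcal B$ of an integer-valued polynomial $P(\mathbf x)\in\Q[\mathbf x]$, and then to control the $p$-adic valuation of such a box sum by elementary means. The key preliminary step is, for each $i\in[1,s]$, to construct an integer-valued polynomial $W_i(y)\in\Q[y]$ such that
$$W_i(y)\equiv w_i(y/p^{m_i})\cdot\mathbf 1[p^{m_i}\mid y]\mod p^m\qquad\text{for every }y\in\Z.$$
The model case $m_i=1,\;t_i=0,\;w_i\equiv 1$ reduces to the classical identity $(1-y^{p-1})^{p^{m-1}}\equiv\mathbf 1[p\mid y]\mod p^m$, a polynomial of degree $\varphi(p^m)$. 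The general construction combines iterations of this indicator (to handle higher prime powers $p^{m_i}$) with an integer-valued lift of the scaled weight $w_i(y/p^{m_i})$, producing a $W_i$ which further admits a $p$-adic decomposition $W_i\equiv \sum_{k=0}^{m-1}p^{k}g_{i,k}\mod p^m$ in which each $g_{i,k}\in\Q[y]$ is integer-valued with $\deg g_{i,0}\le (t_i+1)p^{m_i}-1$ and $\deg g_{i,k}\le \deg g_{i,0}+k\cdot\varphi(p^{m_i})$ for $k\ge1$.

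With the $W_i$'s in hand, $N\equiv \sum_{\mathbf a\in\mathcal B}\prod_i W_i(f_i(\mathbf a))\mod p^m$, and expanding the product yields
$$N\equiv \sum_{\substack{\mathbf k\in\N_0^s\\k_1+\cdots+k_s\le m-1}}p^{k_1+\cdots+k_s}\sum_{\mathbf a\in\mathcal B}\prod_{i=1}^{s}g_{i,k_i}(f_i(\mathbf a))\mod p^m.$$
To control each inner sum we apply the elementary \emph{sum lemma}: for any monomial $X_1^{j_1}\cdots X_n^{j_n}$ of total degree $d$,
$$\vp_p\!\left(\sum_{\mathbf a\in\mathcal B}\prod_{r=1}^{n}a_r^{j_r}\right)=\vp_p\!\left(\prod_{r=1}^{n}\sum_{a\in\mathcal I_r}a^{j_r}\right)\ge n-\lfloor d/(p-1)\rfloor,$$
since $\sum_{a\in\mathcal I_r}a^{j_r}\equiv 0\mod p$ whenever $j_r=0$ or $(p-1)\nmid j_r$, and among the $n$ indices at most $\lfloor d/(p-1)\rfloor$ can have $j_r$ a positive multiple of $p-1$. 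Applied monomial-by-monomial to $\prod_i g_{i,k_i}(f_i(\mathbf x))$, the $\mathbf k$-th summand contributes $p$-adic valuation at least $(k_1+\cdots+k_s)+n-\tfrac{1}{p-1}\sum_i\deg g_{i,k_i}\cdot \deg f_i$. The worst admissible $\mathbf k$ concentrates the entire slack $k_1+\cdots+k_s=m-1$ on the single index $i$ maximizing $\varphi(p^{m_i})/(p-1)\cdot\deg f_i=p^{m_i-1}\deg f_i$, producing exactly the $(m-1)\max_i\{1,p^{m_i-1}\deg f_i\}$ term in the hypothesis; the case $\mathbf k=\mathbf 0$ produces the $\sum_i\tfrac{(t_i+1)p^{m_i}-1}{p-1}\deg f_i$ term.

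The principal obstacle is the first step: producing a $W_i$ and $p$-adic decomposition that achieve the sharp degree bounds above, which are what cause the final accounting to match the stated hypothesis exactly. This hinges on the Kummer congruence $\vp_p\bigl(\binom{p^{m-1}}{k}\bigr)=m-1-\vp_p(k)$ and the Euler-Fermat identity $y^{\varphi(p^r)}\equiv 1\mod p^r$ for $\gcd(y,p)=1$, used to build integer-valued polynomial indicators for $p^{m_i}$-divisibility modulo $p^m$ at optimal degree. Once these building blocks exist, incorporating Sun's weights by multiplying by an integer-valued polynomial lift of $w_i(y/p^{m_i})$ (scaling each relevant degree by a factor $t_i+1$) propagates cleanly through the sum-lemma computation above.
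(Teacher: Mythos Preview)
Your overall architecture matches the paper's: build, for each $i$, a polynomial $W_i(y)$ congruent modulo $p^m$ to $w_i(y/p^{m_i})\cdot\mathbf 1[p^{m_i}\mid y]$, expand the product over the full box $\mathcal B$, and control each piece with an elementary box-sum lemma. The paper does exactly this. The difference---and the genuine gap---is in how the $W_i$ are produced.

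You correctly flag the construction as ``the principal obstacle,'' but the tools you name (the Kummer valuation $\vp_p\binom{p^{m-1}}{k}=m-1-\vp_p(k)$ and Euler--Fermat) do not by themselves yield the $p$-adic degree stratification you claim. Already in the model case $m_i=1$, $t_i=0$, $w_i\equiv 1$, $m=2$: the polynomial $(1-y^{p-1})^{p}$ contains the monomial $y^{3(p-1)}$ with coefficient $-\binom{p}{3}$ of $p$-valuation exactly $1$ (for $p\ge 5$), so it would have to sit in $p\cdot g_1$; but your bound demands $\deg g_1\le 2(p-1)<3(p-1)$. More generally $\vp_p\binom{p^{m-1}}{j}=m-1-\vp_p(j)$ does not grow with $j$, so the explicit indicator $(1-y^{p-1})^{p^{m-1}}$ fails the stated stratification. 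And for $m_i\ge 2$ there is no obvious ``iteration'' of Euler--Fermat that produces an indicator of $p^{m_i}$-divisibility at all: for instance $1-y^{\varphi(p^2)}$ already misdetects $y$ with $p\mid y$ but $p^2\nmid y$.

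What the paper uses instead is the \emph{Weighted Weisman--Fleck congruence} (Theorem~\ref{thm-weisman-Fleck-weighted}). One Newton-expands the periodic function $h(y)=w_i(\lfloor y/p^{m_i}\rfloor)\cdot\mathbf 1[p^{m_i}\mid y]$ as $\sum_n(\Delta^n h)(0)\binom{y}{n}$, and Weisman--Fleck gives $\vp_p\big((\Delta^n h)(0)\big)\ge\lceil(n-(t_i+1)p^{m_i}+1)/\varphi(p^{m_i})\rceil$. That is precisely the growth you asserted for your $g_{i,k}$'s, but it is a nontrivial arithmetic input about alternating sums $\sum_{j\equiv r\,(p^s)}(-1)^j\binom{n}{j}w\big((j-r)/p^s\big)$, not a consequence of Kummer applied to individual binomials.

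A secondary point: once the $g_{i,k}$ exist, they are integer-valued polynomials with possibly non-integral coefficients, so applying your sum lemma ``monomial-by-monomial'' to $\prod_i g_{i,k_i}(f_i(\mathbf x))$ requires every monomial coefficient to have $\vp_p\ge 0$. The paper handles this by staying in the binomial basis throughout, reducing to box sums of $\prod_i\binom{f_i(\mathbf x)}{k_i}$, and arguing (its Lemma~\ref{lemm-wilson-ax-katz-summation}) that factors with $k_i\ge p$ collapse to integer constants while factors with $k_i<p$ contribute denominators prime to $p$. Your write-up should either adopt this binomial framework or supply an equivalent $p$-integrality argument.
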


In the special case in Theorem \ref{thm-axkatz-gen} when all $w_i=1$ are constant polynomials, we find that $N=|V|$ is simply the cardinality of $V$, with $t_i=0$ for all $i$. Additionally assuming $m_i=1$ for all $i$, we then recover the Ax-Katz Theorem for $\mathbb F_p$.
In general, the quantity $N$ counts the elements $\textbf a\in V$ each with  multiplicity $w_i\left(\frac{f_i(\textbf a)}{p^{m_i}}\right)$, meaning $N$ may be view as the weighted size of $V$ using the integer valued polynomials $w_1,\ldots,w_s\in \Q[X]$ as weight functions.
The idea to consider such weight functions is due to Zhi-Wei Sun \cite{Sun-Ax-Katz-Wilson}, who indeed noticed (in his unpublished preprint from 2006) that Wilson's argument could be used to prove a result of the form stated in Theorem \ref{thm-axkatz-gen}, specifically, in the case $\mathcal I_j=[0,p-1]$ for all $j$.  However, as already alluded to, we are primarily interested in the \emph{application} of Theorem \ref{thm-axkatz-gen}, particularly to Combinatorial Number Theory, and for this, the added flexibility gained by considering common zeros inside the box $\mathcal B=\mathcal I_1\times \ldots\times \mathcal I_n$, with the $\mathcal I_j$ allowed to be \emph{any} complete system of residues modulo $p$,  will be quite crucial.
This will become clearer once we have some examples, but the crux of the matter is that, by choosing the $\mathcal I_j$ carefully, we can simulate behavior modulo $p^{m}$ that could normally only be expected modulo $p$, at least so long as we restrict to elements $x\in \mathcal I_j$.

For instance, Fermat's Little Theorem \cite{niven-zuck-mont-nr-text} tells us that
$$x^{p-1}\equiv
\left\{
  \begin{array}{ll}
    1 \mod p& \hbox{if $x\not\equiv 0\mod p$} \\
    0\mod p & \hbox{if $x\equiv 0\mod p$.}
  \end{array}
\right.$$ From a combinatorial point of view, this is quite nice, as it tells us that the polynomial $X^{p-1}$ can be used as an indicator function modulo $p$. Indeed, in many applications of the Chevalley-Warning or Ax-Katz Theorem in Combinatorial Number Theory, this is the key means of translating between combinatorial information and the  algebraic information gleamed from the Chevalley-Warning or Ax-Katz Theorem. Fermat's Little Theorem, of course, fails modulo higher powers of $p$. Nonetheless, Hensel's Lemma can be used  to find an appropriate $\mathcal I_j$ for which Fermat's Little Theorem   holds modulo $p^m$,  when restricted to $x\in \mathcal I_j$. We include the short derivation of Proposition \ref{prop-Ax-Katz-ChooseI} at the end of Section \ref{sec-alg}.

\begin{proposition}\label{prop-Ax-Katz-ChooseI}
Let $p\geq 2$ be  prime and let $m\geq 1$. There exists a complete system of residues $\mathcal I\subseteq [0,p^m-1]$  modulo $p$ such that $$x^{p-1}\equiv
\left\{
  \begin{array}{ll}
    1 \mod p^m& \hbox{if $x\not\equiv 0\mod p$} \\
    0\mod p^m & \hbox{if $x\equiv 0\mod p$,}
  \end{array}
\right.\quad\mbox{ for every $x\in \mathcal I$}.$$
\end{proposition}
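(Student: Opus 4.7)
The plan is to build $\mathcal{I}$ one residue class at a time by Hensel lifting the $(p-1)$-st roots of unity from $\mathbb{F}_p$ up to $\Z/p^m\Z$. Put the representative $0$ into $\mathcal{I}$ to cover the zero residue class; note $0^{p-1}=0\equiv 0\bmod p^m$ since $p-1\geq 1$. For each nonzero class $r\in[1,p-1]$, apply Hensel's Lemma to the polynomial $f(X)=X^{p-1}-1\in\Z[X]$ with approximate root $r$.

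The hypotheses of Hensel's Lemma are immediate here: Fermat's Little Theorem gives $f(r)=r^{p-1}-1\equiv 0\bmod p$, while $f'(r)=(p-1)r^{p-2}$ is nonzero modulo $p$ because $\gcd(r,p)=1$ and $p\nmid p-1$. Hensel's Lemma therefore produces a unique $x_r\in\Z/p^m\Z$ with $x_r\equiv r\bmod p$ and $x_r^{p-1}\equiv 1\bmod p^m$. Choose the representative of $x_r$ in $[0,p^m-1]$ and set
\[
\mathcal{I}=\{0\}\cup\{x_r:r\in[1,p-1]\}\subseteq[0,p^m-1].
\]
Since each $x_r$ lies in a distinct nonzero residue class modulo $p$ and $0$ supplies the remaining class, $\mathcal{I}$ is a complete system of residues modulo $p$, and by construction it satisfies the two displayed congruences.

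There is essentially no obstacle; the only thing worth being explicit about is that the lifts are genuinely distinct modulo $p$ (they are, since $x_r\equiv r\bmod p$ with the $r$ pairwise distinct), and that one can always take the canonical representative in $[0,p^m-1]$. The argument is therefore a direct one-paragraph invocation of Hensel's Lemma.
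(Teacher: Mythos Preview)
Your proof is correct and takes essentially the same approach as the paper: include $0$ and Hensel-lift the nonzero residues to $(p-1)$-st roots of unity modulo $p^m$. The only cosmetic difference is that the paper first writes the nonzero residues as powers $z^1,\ldots,z^{p-1}$ of a primitive root before lifting, whereas you lift each $r\in[1,p-1]$ directly; your version is if anything slightly more economical.
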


The main point is that, using  Proposition \ref{prop-Ax-Katz-ChooseI} (or a more general application of Hensel's Lemma) to choose the $\mathcal I_j$ appropriately, it is then often possible to use
Theorem \ref{thm-axkatz-gen}, in place of either the Chevalley-Warning or Ax-Katz Theorem, and de-facto obtain a result via the Polynomial Method for a general finite abelian $p$-group $G$ that could previously only be achieved by the same means for the special case $G=\mathbb F_p^r$.
It is this point that we wish to particularly highlight, and for which we provide several examples  illustrating the idea.

The first example   regards the Davenport Constant $\mathsf D(G)$ of a finite abelian group $G$, defined as the minimal integer $\ell$ such that every sequence of $\ell$ terms from $G$ must contain a nontrivial subsequence whose terms sum to zero (called a zero-sum subsequence). It is an invariant that has received considerable attention, in part due to its connection with Algebraic Number Theory. It is perhaps best simply to refer to the texts \cite{alfred-book} \cite{G-book}, and the many references therein, for broader context. In general, if $G=(\Z/n_1\Z)\times\ldots\times (\Z/n_r\Z)$ with $n_1\mid\ldots \mid n_r$, then a rather simple construction shows that $$\mathsf D(G)\geq \mathsf D^*(G):=1+\Sum{i=1}{r}(n_i-1).$$ While even the (near) exact determination of $\mathsf D(G)$ remains an important and challenging question for a general finite abelian group $G$, the following classical result of Olson \cite{olson-dav-pgroups} and also van Emde Boas and Kruyswijk \cite{deboas-dav-pgroup} showed that the trivial lower bound is tight for $p$-groups. Both these original proofs relied upon ideals and group algebras. Our first application  will be to use Theorem \ref{thm-axkatz-gen} to give a fairly direct proof of Theorem \ref{thm-dav-pgroup}.

\begin{theorem}
\label{thm-dav-pgroup} Let $G$ be a finite abelian $p$-group. Then $$\mathsf D(G)=\mathsf D^*(G).$$
\end{theorem}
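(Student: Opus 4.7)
My plan is to handle both directions in turn. Writing $G \cong \mathbb{Z}/p^{e_1}\mathbb{Z} \oplus \cdots \oplus \mathbb{Z}/p^{e_r}\mathbb{Z}$ with $1 \leq e_1 \leq \cdots \leq e_r$, the lower bound $\mathsf{D}(G) \geq \mathsf{D}^*(G)$ is standard: the sequence consisting of $p^{e_i}-1$ copies of the $i$-th basis element for each $i$ is zero-sum free of length $\mathsf{D}^*(G)-1$. The substantive direction is the upper bound. Given any sequence $g_1, \ldots, g_\ell$ in $G$ with $\ell = \mathsf{D}^*(G) = 1 + \sum_{i=1}^r (p^{e_i} - 1)$, my plan is to produce a nonempty $J \subseteq [1,\ell]$ with $\sum_{k \in J} g_k = 0$ by encoding subsequences via a suitable polynomial indicator and applying Theorem \ref{thm-axkatz-gen}.

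For the setup, I would set $m = e_r$ and invoke Proposition \ref{prop-Ax-Katz-ChooseI} to pick a complete residue system $\mathcal{I} \subseteq [0, p^m - 1]$ modulo $p$ on which $x^{p-1}$ acts as the $\{0,1\}$-valued indicator of ``$x \not\equiv 0 \pmod p$'' \emph{modulo $p^m$}, with $0 \in \mathcal{I}$ representing the zero class (valid since $0^{p-1} = 0$). Writing $g_k = (g_k^{(1)}, \ldots, g_k^{(r)})$ and choosing integer lifts $\tilde g_k^{(i)} \in [0, p^{e_i} - 1]$, I would define
\[
f_i(\textbf{X}) = \sum_{k=1}^{\ell} \tilde g_k^{(i)} X_k^{p-1} \in \mathbb{Z}[X_1,\ldots,X_\ell], \qquad i \in [1, r],
\]
each of degree $p-1$, simply dropping any $f_i$ that vanishes identically (in which case the $i$-th coordinate is zero on every subsequence). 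Setting $\mathcal{B} = \mathcal{I}^\ell$ and $V = \{\textbf{a} \in \mathcal{B} : f_i(\textbf{a}) \equiv 0 \pmod{p^{e_i}} \text{ for all } i\}$, Proposition \ref{prop-Ax-Katz-ChooseI} converts $f_i(\textbf{a})$ into $\sum_{k \in J(\textbf{a})} \tilde g_k^{(i)} \pmod{p^{e_i}}$, where $J(\textbf{a}) := \{k : a_k \neq 0\}$, so $\textbf{a} \in V$ is equivalent to the subsequence indexed by $J(\textbf{a})$ summing to zero in $G$.

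I would then apply Theorem \ref{thm-axkatz-gen} with $s = r$, $n = \ell$, $m_i = e_i$, $w_i \equiv 1$ (hence $t_i = 0$), and $\deg f_i = p-1$. Using $\frac{\varphi(p^{e_i})}{p-1}\deg f_i = (p-1)p^{e_i-1}$ and $\frac{p^{e_i}-1}{p-1}\deg f_i = p^{e_i}-1$, the hypothesis for $p^1$-divisibility of $|V|$ reduces to $\ell > \sum_{i=1}^{r}(p^{e_i}-1)$, which holds by the very definition of $\ell$. Hence $|V| \equiv 0 \pmod p$. Since $\textbf{0} \in V$ (corresponding to the empty subsequence), I get $|V| \geq p \geq 2$, so a nonzero $\textbf{a} \in V$ exists; because $0$ is the unique element of $\mathcal{I}$ in the zero residue class, $J(\textbf{a}) \neq \emptyset$, producing the sought nontrivial zero-sum subsequence. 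The only subtle point, and the one on which the whole argument pivots, is recognizing that Proposition \ref{prop-Ax-Katz-ChooseI} lets $x^{p-1}$ serve as a genuine $\{0,1\}$-indicator modulo $p^{e_i}$ simultaneously for every $i$, which is precisely what upgrades Olson's classical $\mathbb{F}_p$-style polynomial argument to all finite abelian $p$-groups at once.
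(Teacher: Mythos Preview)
Your proof is correct and follows essentially the same approach as the paper: the paper packages your argument as Proposition \ref{prop-altsum} (with $m=0$), constructing the same polynomials $f_j(\textbf x)=\sum_i a_i^{(j)}X_i^{p-1}$ over the box $\mathcal I^\ell$ supplied by Proposition \ref{prop-Ax-Katz-ChooseI}, and then applying Theorem \ref{thm-axkatz-gen} with trivial weights to obtain $|V|\equiv 0\bmod p$. The only cosmetic difference is that the paper records $|V|=\sum_{j\geq 0}(p-1)^j N_j(S)$ and derives the contradiction $1=N_0\equiv 0\bmod p$ under the zero-sum-free assumption, whereas you argue directly that $\mathbf 0\in V$ forces $|V|\geq p$ and hence a second element of $V$ exists; these are equivalent endings to the same argument.
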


The next example   regards the Erd\H{o}s-Ginzburg-Ziv Constant $\mathsf s(G)$ of the finite abelian group $G$, defined as the minimal integer $\ell$ such that every sequence of $\ell$ terms from $G$ must contain a zero-sum subsequence of length $\exp(G)$ (the exponent of $G$). The Erd\H{o}s-Ginzburg-Ziv Theorem implies that $\mathsf s(\Z/n\Z)=2n-1$ \cite{G-book} \cite{nat-book} \cite{alfred-book}. It was an open conjecture of Kemnitz \cite{kemnitz-originalpaper} that $\mathsf s((\Z/n\Z)^2)=4n-3$, for which a simple argument shows that it suffices to consider the case $n=p$ prime. Partial progress towards this conjecture was achieved by Alon and Dubiner \cite{alon-dubiner-org} and by R\'onyai \cite{ronyai-kemnitz} before finally being resolved by Reiher \cite{reiher-kemnitz-conj} (and also di Fiore \cite{sav-chev-kemnitz-fiore}).  Regarding higher rank groups $(\Z/n\Z)^r$, Alon and Dubiner  gave a linear bound via Algebraic Graph Theory \cite{alon-dub-linbound}. Reiher's proof involved combining the Chevalley-Warning Theorem with several combinatorial double counting arguments. Ronyai's proof was also algebraic, but instead made use of linear algebra surrounding multi-linear monomials.
Our second application  will be to use Theorem \ref{thm-axkatz-gen} to give a streamlined proof of Theorem \ref{thm-kemnitzconj}. As we will see, the flexibility of being able to use more general weights  allows us to directly derive some of the congruences used in Reiher's proof, reducing the number of   ad-hoc combinatorial doubling counting arguments needed.
This is not surprising since the proof of the weighted Weisman-Fleck congruence \cite{Sun-Wan-elem-weight-weis-fleck}, which is one of the key components used in the proof of Theorem \ref{thm-axkatz-gen}, already incorporates such double counting arguments into its proof, meaning they are in some sense built into  Theorem \ref{thm-axkatz-gen} itself.  While the proof of Theorem \ref{thm-kemnitzconj} is only a minor variation on Reiher's, it does highlight how the weight functions can be used to generate additional linearly independent congruences in a routine manner. For more complicated arguments, this can simplify the technical calculations and help focus attention on the more involved parts of the argument.

\begin{theorem}[Kemnitz Conjecture]\label{thm-kemnitzconj}
Let $C_p$ be a cyclic group of order $p\geq 2$ prime. Then $$\mathsf s(C_p^2)=4p-3.$$
\end{theorem}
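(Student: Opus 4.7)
The plan is to prove both inequalities $\mathsf s(C_p^2)\geq 4p-3$ and $\mathsf s(C_p^2)\leq 4p-3$; the case $p=2$ is immediate from pigeonhole, so assume $p$ odd. For the lower bound I would exhibit the standard length-$4(p-1)$ sequence $(0,0)^{p-1}(1,0)^{p-1}(0,1)^{p-1}(1,1)^{p-1}$, which contains no zero-sum subsequence of length $p$ since any such subsequence would force an impossible balance modulo $p$ among the four distinct types.

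For the upper bound I would argue by contradiction: assume $S=\textbf a_1\cdots \textbf a_{4p-3}$ is a sequence in $\mathbb F_p^2$ with no zero-sum subsequence of length $p$, write $\textbf a_j=(a_j^{(1)},a_j^{(2)})$ with $a_j^{(i)}\in[0,p-1]\subset\Z$, and let $M_k$ count the zero-sum subsequences of $S$ of length $k$, so $M_0=1$ and $M_p=0$. Invoking Proposition \ref{prop-Ax-Katz-ChooseI} with $m=2$, fix a complete residue system $\mathcal I\subseteq[0,p^2-1]$ modulo $p$ on which $x^{p-1}$ is $\{0,1\}$-valued modulo $p^2$. Set $n=4p-3$, $\mathcal B=\mathcal I^n$, and define
$$f_i(\textbf X)=\Sum{j=1}{n}a_j^{(i)}X_j^{p-1}\quad(i=1,2),\qquad f_3(\textbf X)=\Sum{j=1}{n}X_j^{p-1},$$
all of degree $p-1$. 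Applying Theorem \ref{thm-axkatz-gen} with $s=3$, $m_1=m_2=m_3=1$ and constant weights, the numerical hypothesis reduces to $n>(m+2)(p-1)$, which holds for $m=2$; hence $|V|\equiv 0\pmod{p^2}$. Since each $\textbf a\in V$ has support $I$ forming a zero-sum subsequence of length divisible by $p$, each such $I$ contributing $(p-1)^{|I|}$ lifts to $V$, and since $(p-1)^{kp}\equiv(-1)^{kp}\pmod{p^2}$, the assumption $M_p=0$ yields a first congruence of the form
$$1+M_{2p}-M_{3p}\equiv 0\pmod{p^2}.$$

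To obtain further independent congruences I would reapply Theorem \ref{thm-axkatz-gen} after modifying the setup in two complementary ways: (i) replacing the constant weights by polynomials such as $w_i(X)=\binom{X}{k}$, thereby attaching to each $\textbf a\in V$ a multiplicity depending on $f_i(\textbf a)/p$ and producing linear congruences involving refined weighted counts of zero-sum subsequences; and (ii) dropping $f_3$, which loosens the numerical budget to $n>(m+1)(p-1)$ and admits $m=3$, giving divisibility by $p^3$ for a sum that also ranges over zero-sum subsequences of lengths not divisible by $p$. These weighted congruences play the role of the combinatorial double-counting identities Reiher had to derive by hand in his original proof, but here they emerge uniformly from a single machinery.

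The main obstacle will be engineering a sufficient supply of linearly independent congruences to rule out every joint value $(M_{2p},M_{3p})$ consistent with $M_p=0$ under the very tight numerical ceiling $n=4p-3$: increasing $m$, the number $s$ of polynomials, or the weight degrees $t_i$ in Theorem \ref{thm-axkatz-gen} all compete for the same budget, forcing careful economy in the choice of weights and residue systems $\mathcal I_j$. Once enough congruences are assembled, the system becomes inconsistent unless $M_p\neq 0$, completing the contradiction and the proof.
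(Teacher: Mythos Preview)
Your opening steps are sound: the lower-bound construction and the $p=2$ case match the paper, and your first congruence $1+M_{2p}-M_{3p}\equiv 0\pmod{p^2}$ is correctly derived from Theorem~\ref{thm-axkatz-gen} with $m=2$. But the difficulty you flag in your final paragraph is not merely an obstacle to be overcome later---it is where the sketch stops being a proof. Neither route you propose yields the needed extra relations. For~(i), attaching any weight of degree $t\geq 1$ to one of the $f_i$ while keeping $s=3$ and all $m_i=1$ pushes the numerical hypothesis of Theorem~\ref{thm-axkatz-gen} to $n>(t+3)p-3$ already at $m=1$, and this fails for $n=4p-3$; so you cannot extract a second independent congruence from $S$ itself this way. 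For~(ii), dropping $f_3$ does buy $m=3$, but the resulting congruence $\sum_{j\geq 0}(p-1)^jN_j(S)\equiv 0\pmod{p^3}$ ranges over \emph{all} lengths $j$, introducing far more unknowns than it constrains. With only the two unknowns $M_{2p},M_{3p}$ and a single mod-$p^2$ relation among them, there is no inconsistency to exploit.

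The paper's argument is organized quite differently. It works only modulo $p$ throughout, but applies Proposition~\ref{prop-altsum-q} not just to $S$ but to carefully chosen \emph{subsequences} (to force $N_{3p}(S)=0$ via the Alon--Dubiner complement trick and to establish $N_{2p}(T)\equiv -1$ for every $T\mid S$ with $|T|\geq 3p-2$), to the residue class $\alpha=p-1$ (bringing in the new variables $N_{p-1},N_{2p-1},N_{3p-1}$), and to the augmented sequence $S\bdot 0$ of length $4p-2$, which is exactly the length needed to accommodate a degree-one weight. Even then, one of the three final congruences, $N_{p-1}\equiv N_{3p-1}\pmod p$, still requires a hand-made combinatorial double count and does not fall out of Theorem~\ref{thm-axkatz-gen} directly. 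Your proposal contains none of these devices---the passage to subsequences, the shift to $\alpha=p-1$, the adjunction of a zero term, or the residual Reiher-style identity---and without at least some of them the system of congruences you are able to generate remains consistent.
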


The  final examples   regard a generalized Erd\H{o}s-Ginzburg-Ziv constant $\mathsf s_{k\exp(G)}(G)$ of the finite abelian group $G$, defined as the minimal integer $\ell$ such that every sequence of $\ell$ terms from $G$ must contain a zero-sum subsequence of length $k\exp(G)$.
See  \cite{bitz-skq-lowerbounds} \cite{Gao-skq-basicell} \cite{gao-han-skq-specialcase-var} \cite{Gao-inverse-skq} \cite{Gao-thanga-skq-lowk} \cite{han-skq-rank4} \cite{he-skq} \cite{kubertin-skq} for some relevant examples of results regarding $\mathsf s_{k\exp(G)}(G)$.
More generally, given a subset $X\subseteq \N_0$, we let
$\mathsf s_{X}(G)$ be the minimal integer $\ell$ such that every sequence of $\ell$ terms from $G$ must contain a zero-sum subsequence $T$ with length $|T|\in X$.
Here, we will particularly focus on a question initially raised by Kubertin \cite{kubertin-skq} and later extended  in  \cite{gao-han-skq-specialcase-var}. The problem, for a finite abelian  group $G$, is to find an optimal bound $\ell(G)$ such that  $\mathsf s_{k\exp(G)}(G)=k\exp(G)+\mathsf D(G)-1$ for all $k\geq \ell(G)$.
The corresponding lower bound for $s_{k\exp(G)}(G)$ follows from a rather basic construction, so the issue is how large must $k$ be to ensure  $\mathsf s_{k\exp(G)}(G)\leq k\exp(G)+\mathsf D(G)-1$.
An older result of Gao implies this is true for $k\geq \frac{|G|}{\exp(G)}$ \cite{Gao-skq-basicell}, and it was conjectured in  \cite{kubertin-skq}   \cite{gao-han-skq-specialcase-var} that the optimal bound for $k$ should be $k\geq d:=\left \lceil \frac{\mathsf D(G)}{\exp(G)}\right\rceil$. For $p$-groups, this was proven  for $d\leq 4$ when $p\geq 2d-1$ by Dongchun Han \cite{han-skq-rank4}.
For more general $p$-groups, Xiaoyu He could show $\mathsf s_{k\exp(G)}(G)\leq k\exp(G)+\mathsf D(G)-1$ holds for $k\geq p+d$ when $p\geq \frac72 d-\frac32$, and they posed the problem of obtaining a significant improvement of their result by removing the dependance on $p$ from the lower bound for $k$ \cite[pp. 405]{he-skq}.

Our concluding applications are to use Theorem \ref{thm-axkatz-gen} to give a much shorter  proof of Dongchun Han's \cite{han-skq-rank4}  result (Theorem \ref{thm-sk-spec}), and to also answer the problem of Xiaoyu He \cite{he-skq} in the affirmative by showing $k>\frac{d(d-1)}{2}$, which is independent of $p$,  suffices when $p>d(d-1)$ (Theorem \ref{thm-sk-gen}). Both these results make use of Theorem \ref{thm-alg-expconsequence}, which is derived from Theorem \ref{thm-axkatz-gen} and generalizes \cite[Theorem 3]{he-skq} by relaxing the hypothesis $X\subseteq [1,p]$ to that given in \eqref{det-hyp}. Xiaoyu He proved \cite[Theorem 3]{he-skq} by an extension of the method used by Kubertin \cite{kubertin-skq}, which was based on the  methods developed by R\'onyai for his result regarding the Kemnitz Conjecture \cite{ronyai-kemnitz}. In this way,  Theorem \ref{thm-axkatz-gen} simultaneously generalizes both the Chevalley-Warning Theorem and the main applications of the  algebraic method of R\'onyai into a single algebraic tool.

\begin{theorem}\label{thm-alg-expconsequence}
Let $G$ be a finite abelian $p$-group with exponent $q>1$, let $d=\left \lceil \frac{\mathsf D^*(G)}{q}\right\rceil$, let $m\geq 0$,  let $X\subseteq \mathbb N$ be a subset of positive integers with $|X|\geq d+m$, and let $\{x_1,\ldots,x_s\}= [1,\max X]\setminus X$ with the $x_i$ distinct. Suppose \be\label{det-hyp}\prod_{i=1}^{s}x_i\prod_{1\leq i<j\leq s}(x_j-x_i)\not\equiv 0\mod p^{m+1}.\ee Then
\begin{align*}\mathsf s_{ X\cdot q}(G)&\leq \big(\max X-|X|+\frac{m(p-1)}{p}+1\big)q+\mathsf D^*(G)-1\\
&\leq \big(\max X+1-\frac{m}{p}\big)q-r,\end{align*} where $r\in [1,q]$ is the integer such that $d=\frac{\mathsf D^*(G)+r-1}{q}$.
\end{theorem}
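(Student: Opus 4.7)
The plan is to derive the bound by a single application of Theorem~\ref{thm-axkatz-gen} with one nontrivial weight function. Suppose for contradiction that a sequence $g_1,\ldots,g_n$ of length $n := \big(\max X - |X| + \tfrac{m(p-1)}{p} + 1\big)q + \mathsf{D}^*(G) - 1$ from $G \cong \Z/p^{e_1}\Z \times \cdots \times \Z/p^{e_r}\Z$ (with $1\le e_1 \le \cdots \le e_r$ and $p^{e_r}=q$) contains no zero-sum subsequence whose length lies in $X\cdot q$; write $g_{i,j}$ for the $j$-th coordinate of $g_i$. By Proposition~\ref{prop-Ax-Katz-ChooseI}, applied with its parameter taken to be any integer $M\ge m+1+e_r$, I choose a complete system of residues $\mathcal{I}\subseteq[0,p^M-1]$ modulo $p$ on which $x^{p-1}\equiv 1\pmod{p^M}$ when $x\not\equiv 0\pmod p$ and $x^{p-1}\equiv 0\pmod{p^M}$ when $x\equiv 0\pmod p$; let $x_0\in\mathcal{I}$ be the unique element with $x_0\equiv 0\pmod p$. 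On the box $\mathcal{B}=\mathcal{I}^n$ I use the polynomials $f_j(\textbf x)=\sum_{i=1}^n g_{i,j}X_i^{p-1}$ for $j\in[1,r]$ with modulus $p^{e_j}$ and $f_0(\textbf x)=\sum_{i=1}^n X_i^{p-1}$ with modulus $q=p^{e_r}$, each of degree $p-1$; the weight functions are $w_j=1$ for $j\in[1,r]$ and $w_0(Y)=\prod_{i=1}^s(Y-x_i)$ of degree $s$.

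Plugging into Theorem~\ref{thm-axkatz-gen} with its divisibility exponent equal to $m+1$, the maximum in the required inequality equals $\varphi(q)$ and the sum equals $(\mathsf D^*(G)-1)+((s+1)q-1)$, so the right-hand side equals $n-1$; the strict inequality thus holds with slack $1$, and the theorem yields $N\equiv 0\pmod{p^{m+1}}$, where $N=\sum_{\textbf a\in V} w_0(f_0(\textbf a)/q)$. For $\textbf a\in V$, the set $S(\textbf a):=\{i:a_i\neq x_0\}$ is a zero-sum subsequence whose length $\ell_S$ is a multiple of $q$, and each zero-sum subsequence of length $kq$ arises from exactly $(p-1)^{kq}$ choices of $\textbf a\in V$. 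Because $a_i^{p-1}$ is congruent to $1$ or $0$ modulo $p^M$ according as $a_i\neq x_0$ or $a_i=x_0$, a routine expansion gives $f_0(\textbf a)/q\equiv \ell_S/q\pmod{p^{M-e_r}}$, and since $w_0$ has integer coefficients and $M-e_r\ge m+1$,
\[
N \;\equiv\; \sum_{k\ge 0}(p-1)^{kq}\,Z_k\,w_0(k) \pmod{p^{m+1}},
\]
where $Z_k$ counts zero-sum subsequences of length $kq$ (so $Z_0=1$).

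The decisive arithmetic observation is that $|X|\ge d+m$ together with $\mathsf D^*(G)=dq-r+1$ gives $n/q\le \max X + 1 - m/p - r/q < \max X+1$, so every $\textbf a\in V$ has $k=|S(\textbf a)|/q\in[0,\max X]$. Under the contradiction hypothesis $Z_k=0$ for $k\in X$, while $w_0(k)=0$ for $k\in\{x_1,\ldots,x_s\}=[1,\max X]\setminus X$, so only the term $k=0$ survives and $N\equiv w_0(0) = (-1)^s\prod_{i=1}^s x_i \pmod{p^{m+1}}$. The determinant condition~\eqref{det-hyp} forces $\vp_p\!\big(\prod_{i=1}^s x_i\big)\le m$, hence $w_0(0)\not\equiv 0\pmod{p^{m+1}}$, contradicting $N\equiv 0\pmod{p^{m+1}}$. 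This gives the first bound on $\mathsf s_{X\cdot q}(G)$; the second bound follows from the first, since a short computation using $|X|\ge d+m$ shows the first bound is at most the second.

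The main technical obstacle is the tight balance demanded by Theorem~\ref{thm-axkatz-gen}: the weight $w_0$ must have degree at least $s$ in order to annihilate every forbidden length in $[1,\max X]\setminus X$, yet its degree cannot exceed $s$ without violating the Ax--Katz inequality at the desired divisibility $p^{m+1}$. That degree exactly $s$ is admissible relies on the strict inequality $n/q<\max X+1$, which is precisely what the hypothesis $|X|\ge d+m$ guarantees; without it, `rogue' terms with $k>\max X$ would contribute to $N$ and obstruct the collapse to the single term $w_0(0)$ whose non-vanishing modulo $p^{m+1}$ is secured by~\eqref{det-hyp}.
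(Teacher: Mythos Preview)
Your proof is correct and takes a genuinely different route from the paper's. The paper first reduces to $|X|=d+m$, then applies Theorem~\ref{thm-axkatz-gen} with $s+1$ separate weight functions $w_i(X)=X^i$ for $i\in[0,s]$, obtaining a system of $s+1$ linear congruences in the $s$ unknowns $N_{x_1},\ldots,N_{x_s}$ over $\Z/p^{m+1}\Z$; inconsistency is then established via Cramer's rule and the Vandermonde determinant, which is where the full strength of~\eqref{det-hyp} is invoked. You instead make a single application of Theorem~\ref{thm-axkatz-gen} with the tailored weight $w_0(Y)=\prod_{i=1}^{s}(Y-x_i)$, whose roots annihilate every forbidden residue class in one stroke, so that the weighted count collapses directly to $(-1)^s\prod_i x_i$.

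Your approach is cleaner and in fact slightly stronger: it never uses the Vandermonde factor $\prod_{i<j}(x_j-x_i)$ in~\eqref{det-hyp}, only the condition $\prod_i x_i\not\equiv 0\pmod{p^{m+1}}$, and it requires no preliminary reduction to $|X|=d+m$. What the paper's approach buys is a more transparent connection to the linear-algebraic viewpoint (and to R\'onyai's method), making explicit that the monomial weights $1,X,\ldots,X^s$ generate enough independent congruences; your single polynomial weight is simply the precomputed linear combination that does the job directly. Either way, the degree budget in Theorem~\ref{thm-axkatz-gen} is saturated at exactly $s$, and the hypothesis $|X|\ge d+m$ is what caps the subsequence lengths at $\max X\cdot q$ and prevents extraneous terms from entering the sum.
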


\begin{theorem}
\label{thm-sk-spec}
Let $G$ be a finite abelian $p$-group with exponent $q$,  let $d=\left \lceil \frac{\mathsf D^*(G)}{q}\right\rceil$, and suppose $p\geq 2d-1$ and $d\leq 4$. Then  $$\mathsf s_{kq}(G)\leq kq+\mathsf D^*(G)-1\quad \mbox{ for every $k\geq d$}.$$
\end{theorem}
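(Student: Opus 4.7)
The plan is to proceed by strong induction on $k \geq d$, treating the inductive range $k \geq 2d$ and the base range $k \in [d, 2d - 1]$ separately. In both ranges the central tool is Theorem~\ref{thm-alg-expconsequence}, and the hypotheses $d \leq 4$ and $p \geq 2d - 1$ are calibrated precisely so that the Vandermonde-type nondivisibility condition \eqref{det-hyp} is attainable (with $m = 0$) for each subset $X \subseteq \mathbb{N}$ we shall need.

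For the inductive step $k \geq 2d$, let $S$ be a sequence of terms from $G$ of length $kq + \mathsf{D}^*(G) - 1$. I apply Theorem~\ref{thm-alg-expconsequence} with $X = [1, k - d]$ and $m = 0$: the complement $[1, \max X] \setminus X$ is empty, so the product in \eqref{det-hyp} is the empty product (equal to $1$, trivially a unit modulo $p$), and $|X| = k - d \geq d$ since $k \geq 2d$. The resulting bound is $\mathsf{s}_{X \cdot q}(G) \leq q + \mathsf{D}^*(G) - 1 \leq |S|$, so $S$ contains a zero-sum subsequence $T$ of length $jq$ for some $j \in [1, k - d]$. The complementary subsequence has length $(k - j)q + \mathsf{D}^*(G) - 1$ with $k - j \geq d$, so the inductive hypothesis yields a zero-sum subsequence of length $(k - j)q$ inside it; concatenating it with $T$ produces a zero-sum subsequence of length $kq$ in $S$, as desired.

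For the base cases $k \in [d, 2d - 1]$, I apply Theorem~\ref{thm-alg-expconsequence} with $X = [k - d + 1, k]$, which has $|X| = d$ and complement $[1, k - d]$ in $[1, \max X]$ of size $k - d \leq d - 1 \leq 3$. The product in \eqref{det-hyp} then works out to $\prod_{l = 1}^{k - d} l!$, all of whose prime factors are at most $k - d \leq d - 1 < 2d - 1 \leq p$, so the product is a unit modulo $p$ and \eqref{det-hyp} holds with $m = 0$. Theorem~\ref{thm-alg-expconsequence} then yields a zero-sum subsequence of length $jq$ for some $j \in [k - d + 1, k]$; if $j = k$ we are done, and otherwise we iterate further applications of Theorem~\ref{thm-alg-expconsequence} (with alternative size-$d$ choices of $X$ for which the analogous Vandermonde estimate still holds) on the complementary sequence, collecting zero-sum subsequences of lengths in $[1, d] \cdot q$ and selecting a subfamily whose total length is exactly $kq$. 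The main obstacle is this final assembly step: the returned lengths $j_i$ must admit a subfamily summing to $k$, which requires a careful finite case analysis (possible since $d \leq 4$) and uses the flexibility in the choice of $X$ afforded by the hypothesis $p \geq 2d - 1$ to rule out pathological subset-sum configurations.
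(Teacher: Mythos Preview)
Your reduction for $k \geq 2d$ is valid; it is a strong-induction variant of the paper's Lemma~\ref{lem-transfer}, which instead repeatedly extracts blocks of size $k_0 q = dq$ using only the base cases themselves (no further appeal to Theorem~\ref{thm-alg-expconsequence}). Either argument works.

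The genuine gap is in the base range $k\in[d,2d-1]$. You correctly identify the ``assembly step'' as the main obstacle, but this is where essentially all of the content lies, and the strategy you sketch---extracting disjoint zero-sums $T_1,T_2,\ldots$ and selecting a subfamily of total length $kq$---can stall. Concretely, take $d=3$, $k=3$: your first application with $X=[1,3]$ may return $j_1=2$. On the complement (length $q+\mathsf D^*(G)-1$) the bound in Theorem~\ref{thm-alg-expconsequence} forces $\max X=|X|$, so $X=[1,n]$ is the only option, and you may again get $j_2=2$. You now hold two disjoint zero-sums of length $2q$ whose subfamilies have lengths $0,\,2q,\,4q$ only, and the remaining sequence (length $\mathsf D^*(G)-q-1<\mathsf D^*(G)$) is too short for any further application. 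No choice of $X$ within your framework resolves this.

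The paper's Proposition~\ref{prop-small-d} handles the base range via a different, proof-by-contradiction strategy: assuming $0\notin\Sigma_{kq}(S)$, one accumulates constraints $0\notin\Sigma_{jq}(S)$ for an expanding list of $j$'s (by applying Theorem~\ref{thm-alg-expconsequence} with non-interval sets $X$, and by exploiting that the complement of a zero-sum inside a longer zero-sum subsequence is again zero-sum), until these constraints collide with a direct application of the theorem. For instance, in the $d=3$ case one shows successively that $0\notin\Sigma_{\{1,k,k+2\}\cdot q}(S)$ and then applies $X=\{1,k,k+2\}$, which explicitly avoids the problematic length $2q$. For $d=4$ the analysis runs to five subcases and uses sets such as $\{1,t-k,k,t-1\}$ with $t\in\{k+2,k+3\}$; the hypotheses $p\geq 2d-1$ and $d\leq 4$ are invoked sharply at several points. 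Your proposal does not indicate how to carry out any of this, so the base cases remain open.
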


\begin{theorem}
\label{thm-sk-gen}
Let $G$ be a finite abelian $p$-group with exponent $q$,  let $d=\left \lceil \frac{\mathsf D^*(G)}{q}\right\rceil$, and suppose $p>d(d-1)$. Then $$\mathsf s_{kq}(G)\leq kq+\mathsf D^*(G)-1 \quad\mbox{ for every $k>\frac{d(d-1)}{2}$}.$$
\end{theorem}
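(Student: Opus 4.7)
The plan is strong induction on $k > d(d-1)/2$. For the inductive step with $k \geq d(d+1)/2$, I would apply Theorem \ref{thm-alg-expconsequence} to the given sequence $S$ of length $kq + \mathsf{D}^*(G) - 1$ with $X = \{1, 2, \ldots, d-1\} \cup \{k\}$, so that $|X| = d$, $\max X = k$, and $m = 0$. The theorem's bound becomes $\mathsf{s}_{X \cdot q}(G) \leq (k - d + 1)q + \mathsf{D}^*(G) - 1 \leq |S|$, so it yields a zero-sum $T \subseteq S$ of length $\ell q$ with $\ell \in X$. If $\ell = k$ we are done; otherwise $\ell \in [1, d-1]$, so $k - \ell \geq k - d + 1 \geq d(d-1)/2 + 1 > d(d-1)/2$, and the inductive hypothesis applied to $S \setminus T$ of length $(k - \ell)q + \mathsf{D}^*(G) - 1$ produces a zero-sum of length $(k - \ell)q$; its concatenation with $T$ is the sought zero-sum of length $kq$.

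The first technical hurdle is the determinant condition $\prod_{i=d}^{k-1} i \cdot \prod_{d \leq i < j \leq k-1}(j - i) \not\equiv 0 \pmod{p^{m+1}}$ required by Theorem \ref{thm-alg-expconsequence}. For $k \leq p$, the complement $[d, k-1]$ consists of distinct nonzero residues modulo $p$, so the determinant is a unit mod $p$ and $m = 0$ suffices. When $k > p$, multiples of $p$ appearing in the complement inflate the $p$-adic valuation of the product, and pairs congruent mod $p$ inflate that of the Vandermonde; I would compensate by augmenting $X$ to absorb these multiples of $p$, taking $|X| = d + m$ for an appropriately chosen $m$ so that the bound $(k - |X| + \frac{m(p-1)}{p} + 1)q + \mathsf{D}^*(G) - 1 \leq kq + \mathsf{D}^*(G) - 1$ remains valid. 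The hypothesis $p > d(d-1)$ is precisely what ensures that pairwise differences among any $d$ integers in a window of length $d(d-1)/2$ are coprime to $p$, keeping the Vandermonde's $p$-adic cost controllable.

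The principal obstacle is the inductive base, namely the $d - 1$ values with $d(d-1)/2 < k < d(d+1)/2$, where the safe zone $[1, k - d(d-1)/2 - 1]$ contains fewer than $d - 1$ integers and cannot supply the auxiliary elements of $X \setminus \{k\}$. For these $k$, I would instead apply Theorem \ref{thm-alg-expconsequence} iteratively with $X = [1, d]$, whose empty complement in $[1, d]$ trivializes the determinant condition and yields $\mathsf{s}_{X \cdot q}(G) \leq q + \mathsf{D}^*(G) - 1$; repeated application extracts zero-sums $T_1, \ldots, T_t$ of lengths $\ell_i q$ with $\ell_i \in [1, d]$ until the accumulated $L = \sum \ell_i$ first lands in $[k, k + d - 1]$. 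A subset-sum argument—augmented if necessary by a further targeted application of Theorem \ref{thm-alg-expconsequence} to the current remainder in order to swap out an inconvenient $\ell_i$—then produces $I \subseteq [1, t]$ with $\sum_{i \in I} \ell_i = k$, so that $\bigcup_{i \in I} T_i$ is a zero-sum of length $kq$. The delicate point is handling degenerate configurations (for instance, where all extracted $\ell_i$ equal a fixed value dividing no integer in the target range), and it is here that the hypothesis $p > d(d-1)$ supplies the flexibility needed to perform the corrective substitution.
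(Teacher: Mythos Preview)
Your scaffold matches the paper's, but both your ``technical hurdle'' and ``principal obstacle'' are where the entire content lies, and your proposed resolutions do not close them. For large $k$, the paper never augments $X$ or tracks $p$-adic valuations: it proves the bound only for $k \in [k_0, 2k_0-1]$ with $k_0 = \frac{d(d-1)}{2}+1$, where the hypothesis $p > d(d-1)$ forces $2k_0-1 \leq p$ so that every such $k$ has $k \leq p$ and the determinant condition with $m=0$ is automatic. A one-line concatenation lemma (write $k = mk_0 + r$ with $r \in [k_0, 2k_0-1]$, extract $m$ disjoint zero-sums of length $k_0q$ and then one of length $rq$) extends this to all $k \geq k_0$. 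So your recursion should bottom out on the full window $[k_0, 2k_0-1]$ rather than on $[k_0, k_0+d-2]$, and the reduction step for larger $k$ should re-use an already-established case rather than call Theorem~\ref{thm-alg-expconsequence} afresh.

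The genuine gap is the base. Extracting repeatedly with $X = [1,d]$ yields zero-sums of \emph{uncontrolled} lengths $\ell_i \in [1,d]$, and subset-sums of such a multiset need not hit $k$; your own example (all $\ell_i$ equal to a fixed value not dividing $k$) is a real obstruction, and the ``targeted swap'' is not an argument. The paper's Claim~A instead extracts a \emph{structured} collection: disjoint zero-sums $T_1,\ldots,T_{d-1}$ of lengths exactly $q, 2q, \ldots, (d-1)q$, one of each. Since the subset-sums of $\{1, 2, \ldots, d-1\}$ cover all of $\big[0, \frac{d(d-1)}{2}\big]$, a final application of Theorem~\ref{thm-alg-expconsequence} to the residual sequence with $X$ a length-$d$ interval finishes at once. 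Building that structured collection is the real work: having already obtained lengths $\{y_1,\ldots,y_s\} \subsetneq [1,d-1]$ with $y = \sum y_i$, one applies Theorem~\ref{thm-alg-expconsequence} to the residual with $X = \big([1,d-1]\setminus\{y_1,\ldots,y_s\}\big) \cup \{k-y,\, k-y+y_1,\,\ldots,\, k-y+y_s\}$. A zero-sum whose length falls in the second block would combine with the complementary sub-collection of the $T_i$ to give total length $kq$, contradicting $0\notin\Sigma_{kq}(S)$; hence the new length lies in $[1,d-1]\setminus\{y_1,\ldots,y_s\}$, and the collection grows. The hypothesis $k > \frac{d(d-1)}{2}$ is exactly what keeps the two blocks of $X$ disjoint with $|X|=d$, and $k \leq p$ keeps the complement inside $[1,p-1]$.
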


\subsection{Additional Notation} For our applications in Combinatorial Number Theory, we will have need to deal with (combinatorial) sequences $S$ of terms from a finite abelian group $G$. Here, per tradition in Combinatorial Number Theory, a \emph{sequence} is considered to be a finite and unordered string of elements from $G$, which we write as  $$S=g_1\bdot\ldots\bdot g_\ell$$ with the $g_i\in G$ the terms in the sequence $S$ and each term separated by the concatenation operation $\bdot$. From a combinatorial perspective, a sequence is simply a multi-set, where we use the natural language of sequences to describe its properties, and use the formal algebraic notation from free abelian monoids to easily describe and manipulate its terms \cite{alfred-book} \cite{G-book}. The former avoids confusion with ordinary sets, and the latter is very helpful in more complicated combinatorial arguments.
Then $|S|=\ell$  denotes the length of the sequence $S$. Analogous to the definition of the $p$-adic valuation, for $g\in G$,  $\vp_g(S)$ denotes the multiplicity of the term $g$ in $S$, in which case $S=\prod^\bullet_{g\in G}g^{[\vp_g(S)]}$, where $g^{[n]}={\underbrace{g\bdot\ldots\bdot g}}_n$ denotes the sequence consisting of the element $g$ repeated $n$ times. The notation  $T\mid S$ indicates that  $T$ is a subsequence of $S$, meaning $\vp_g(T)\leq \vp_g(S)$ for all $g\in G$, and then $T^{[-1]}\bdot S$ or $S\bdot T^{[-1]}$ denotes the sequence obtained from $S$ by removing the terms in $T$, so $\vp_g(T^{[-1]}\bdot S)=\vp_g(S)-\vp_g(T)$ for all $g\in G$.
The sum of terms in $S$ is denoted $$\sigma(S)=g_1+\ldots+g_\ell\in G,$$ and the sequence $S$ is \emph{zero-sum} if $\sigma(S)=0$. Given a subset $X\subseteq \N_0$, we use the notation
$$\Sigma_{X}(S)=\{\sigma(T):\; T\mid S,\; |T|\in X\}$$ to denote all elements $g\in G$ that can be represented of a sum of terms from a subsequence of $G$ whose length lies in $X$. In the case $X=\{1,2,\ldots,\}$, we use the abbreviation $$\Sigma(S)=\Sigma_{\{1,2,\ldots\}}(S)=\{\sigma(T):\; T\mid S,\; |T|\geq 1\}$$ to denote all elements that are a sum of terms from a nontrivial subsequence of $S$. The sequence $S$ is called \emph{zero-sum free} if it has no nontrivial zero-sum subsequences, i.e., if $0\notin \Sigma(S)$. For $j\geq 0$, we let $$N_j(S)=|\{I\subseteq [1,\ell]:\; |I|=j,\;\sigma\big({\prod}^\bullet_{i\in I}g_i\big)=0\}|$$ count the number of (indexed) zero-sum subsequences of $S=g_1\bdot\ldots\bdot g_\ell$ with length $j$.

Regarding finite abelian groups $G$, we let $C_n$ denote a cyclic group of order $n\geq 1$. Then $G=C_{n_1}\oplus\ldots\oplus C_{n_r}$ with $1\leq n_1\mid \ldots\mid n_r$ and $n_r=\exp(G)$ the \emph{exponent} of $G$, and we set $$\mathsf D^*(G)=1+\Sum{i=1}{r}(n_i-1).$$ The order of an element $g\in G$ is denoted $\ord(g)$. A \emph{basis} for $G$ is a tuple $(e_1,\ldots,e_r)$ of elements $e_1,\ldots,e_r\in G$ with $G=\la e_1\ra\oplus\ldots\oplus \la e_r\ra$.  Finally, given a subset $X=\{x_1,\ldots,x_s\}\subseteq \Z$ and $q\in \Z$, we let $$X\cdot q=\{x_1q,\ldots,x_sq\}.$$

\section{Proof of the Weighted Ax-Katz-Wilson Theorem}\label{sec-alg}

In this section, we give the details of the proof of Theorem \ref{thm-axkatz-gen}. The following congruence is the first the key component in the proof. The case when $w(X)=1$ is  a constant polynomial  is a result of Weisman \cite{weisman}, generalizing an older congruence of Fleck \cite{Fleck} \cite{Dickson-fleckref} who treated the case $s=1$. The more general version involving the polynomial weight $w(X)$ was originally proved by Daqing Wan \cite{Wan-weighted-weisman-fleck}, with an elementary proof via complex roots of unitary later found by Zhi-Wei Sun and Daqing Wan \cite{Sun-Wan-elem-weight-weis-fleck}.

\begin{theorem}[Weighted Weisman-Fleck Congruence] \label{thm-weisman-Fleck-weighted} Let $n,\,r,\,s\geq 0$  be integers, let $p\geq 2$ be prime, and let $w(X)\in \Q[X]$ be an integer valued polynomial of degree $t\geq 0$. Then
 $$\underset{i\geq 0}{\Summ{i\equiv r\mod p^s}}(-1)^i\binom{n}{i}w\Big(\frac{i-r}{p^s}\Big)\equiv 0\mod p^m,\quad
\mbox{ where
$m=\max\{0,\;\left\lceil \frac{n-(t+1)p^{s}+1}{\varphi(p^s)}\right\rceil$}\}.
$$
\end{theorem}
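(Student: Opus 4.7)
My plan is to reduce to the single weight $w(X) = \binom{X}{t}$ and then apply a roots-of-unity filter, tracking $p$-adic valuations in the cyclotomic ring $\Z[\zeta_{p^s}]$.

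First, by Mahler's theorem, any integer-valued polynomial of degree at most $t$ is a $\Z$-linear combination of $\binom{X}{k}$ for $0 \leq k \leq t$. Since the bound $m$ stated in the conclusion is weakly decreasing in $\deg w$, linearity reduces the problem to the single weight $w(X) = \binom{X}{t}$; denote the resulting sum by $F(n, r)$.

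Set $q = p^s$ and let $\omega$ be a primitive $q$-th root of unity in $\C$. The roots-of-unity filter $[i \equiv r \pmod q] = \frac{1}{q}\sum_{a=0}^{q-1}\omega^{a(i-r)}$ yields
\[
q\,F(n, r) = \sum_{a=0}^{q-1}\omega^{-ar}\sum_{i=0}^{n}(-\omega^a)^i\binom{n}{i}\binom{(i-r)/q}{t}.
\]
Expand $\binom{(X-r)/q}{t} = \sum_{k=0}^{t} \gamma_k \binom{X}{k}$ with $\gamma_k \in \Q$ (note $\gamma_t = 1/q^t$, and the lower $\gamma_k$ have denominators controlled by $q^t\cdot t!$). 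Combined with the generating-function identity $\sum_{i} \binom{n}{i} z^i \binom{i}{k} = \binom{n}{k} z^k (1+z)^{n-k}$, the inner sum becomes
\[
\sum_{k=0}^{t} \gamma_k \binom{n}{k} (-\omega^a)^k (1 - \omega^a)^{n-k},
\]
and for $n > t$ the $a = 0$ term vanishes.

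To estimate $v_p(q F(n, r))$, use that $p$ is totally ramified in $\Z[\omega]$ with ramification index $\varphi(q)$ and that, in the normalized valuation with $v_p(p)=1$, we have $v_p(1 - \omega^a) = 1/\varphi(q/\gcd(a, q))$. Partition the outer sum according to $j = v_p(\gcd(a, q)) \in \{0, 1, \ldots, s-1\}$; for each fixed $j$, the corresponding block is a Galois trace from $\Q(\zeta_{p^{s-j}})$ down to $\Q$, and for an algebraic integer $\alpha$ with $v_\mathfrak{P}(\alpha) \geq N$ one has $v_p(\text{Tr}(\alpha)) \geq \lceil N/\varphi(q) \rceil$, converting the cyclotomic estimate into one in $\Z$.

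The main obstacle is capturing the sharp constant. The denominators of the $\gamma_k$ contribute negatively, with $v_p(\gamma_k) \geq -st - v_p(t!)$; a naive combination with $v_p((1-\omega^a)^{n-k}) \geq (n-k)/\varphi(q)$ falls short of the target $m = \lceil (n - (t+1)q + 1)/\varphi(q)\rceil$. Bridging the gap requires harvesting the additional cancellations that arise when the sub-traces corresponding to different $j$ are assembled—effectively exploiting that the Galois traces of $\omega^{-r}(1-\omega)^{n-k}$ over the tower of cyclotomic subfields carry extra divisibility beyond the trivial trace bound—which is the combinatorial heart of the Sun--Wan argument \cite{Sun-Wan-elem-weight-weis-fleck}, itself following the original proof of Wan \cite{Wan-weighted-weisman-fleck}.
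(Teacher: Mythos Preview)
The paper does not actually prove this theorem; it quotes it as a known result and cites Wan \cite{Wan-weighted-weisman-fleck} and the elementary roots-of-unity proof of Sun--Wan \cite{Sun-Wan-elem-weight-weis-fleck}. Your sketch follows exactly the Sun--Wan route the paper points to: reduce to $w(X)=\binom{X}{t}$ via the binomial basis, apply the roots-of-unity filter, expand via the Vandermonde-type identity $\sum_i\binom{n}{i}z^i\binom{i}{k}=\binom{n}{k}z^k(1+z)^{n-k}$, and estimate $p$-adically in $\Z[\zeta_{p^s}]$ using total ramification. The setup is correct and matches the literature the paper relies on.

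However, your write-up is not a self-contained proof: you explicitly acknowledge that the naive valuation bound coming from $v_p(\gamma_k)\ge -st-v_p(t!)$ and $v_p\big((1-\omega^a)^{n-k}\big)\ge (n-k)/\varphi(q)$ falls short of the target exponent $m$, and then you defer the missing cancellation---the extra divisibility of the Galois traces across the cyclotomic tower---back to Sun--Wan. That step is precisely the content of the theorem; without carrying it out (or at least stating the exact Lucas-type congruence for the cyclotomic $\psi$-coefficients that produces the gain), what you have is an outline that stops at the hard point. In effect your proposal and the paper end up in the same place: both point to \cite{Sun-Wan-elem-weight-weis-fleck} for the actual argument. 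If you want a complete proof, you must either supply the sub-trace cancellation explicitly or invoke the Sun--Wan congruence as a black box from the outset rather than midway through an unfinished estimate.
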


The set  $$\mathsf{Map}(\Z)=\{f:\Z\rightarrow\Z\}$$ of all maps $f:\Z\rightarrow \Z$ forms an abelian group with addition defined pointwise: $(f+g)(x)=f(x)+g(x)$ for $f,\,g\in \mathsf{Map}(\Z)$ and $x\in\Z$. We then have an endomorphism ring for this abelian group, $$\mathsf{End}(\mathsf{Map}(\Z))=\{F:\mathsf{Map}(\Z)\rightarrow \mathsf{Map}(\Z):\;\mbox{$F$ is an abelian group homomorphism}\},$$
with addition in $\mathsf{End}(\mathsf{Map}(\Z))$ again defined pointwise and multiplication given by composition, so $(FG)(f)=F(G(f))$ and $(F+G)(f)=F(f)+G(f)$ for $F,\,G\in \mathsf{End}(\mathsf{Map}(\Z))$ and $f\in \mathsf{Map}(\Z)$.

Let $I\in \mathsf{End}(\mathsf{Map}(\Z))$ denote the identity map and let $E\in \mathsf{End}(\mathsf{Map}(\Z))$ be the shift operator, defined by $$E(f)(x):=f(x+1)\quad\mbox{ for $f\in \mathsf{Map}(\Z)$ and $x\in \Z$}.$$ The finite difference operator is then the map $$\Delta:=E-I\in \mathsf{End}(\mathsf{Map}(\Z)),$$ meaning $$ \Delta f(x):=\Delta(f)(x)=f(x+1)-f(x)\quad\mbox{ for $f\in \mathsf{Map}(\Z)$ and $x\in \Z$}.$$

The next component in Wilson's argument is the classical Newton Expansion of an integer valued function, which is easily derived from the above set-up. We include the brief proof for the reader's benefit.

\begin{proposition}[Newton Expansion] \label{prop-newton-expan}
For any map  $f:\Z\rightarrow \Z$, we have
\be\label{sum-newt}f(x)=\Sum{n=0}{\infty}(\Delta^nf)(0)\binom{x}{n}\quad\mbox{ for all $x\in\mathbb N_0$}.\ee
\end{proposition}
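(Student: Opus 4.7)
The plan is to derive the expansion directly from the operator identity $E = I + \Delta$ in the endomorphism ring $\mathsf{End}(\mathsf{Map}(\Z))$. Since $I$ is the multiplicative identity of this ring, it commutes with $\Delta$ (indeed with every element), so the ordinary binomial theorem for commuting ring elements applies. For any $x \in \mathbb N_0$, this yields the key identity
$$E^x = (I+\Delta)^x = \Sum{n=0}{x}\binom{x}{n}\Delta^n,$$
where the powers are understood in the sense of composition and $\Delta^0 = I$.

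Next I apply both sides, viewed as equal endomorphisms of $\mathsf{Map}(\Z)$, to an arbitrary $f \in \mathsf{Map}(\Z)$ and then evaluate the resulting maps at $0 \in \Z$. A trivial induction on $x$ starting from the definition of $E$ shows that $E^x f(y) = f(y+x)$ for all $y \in \Z$, so in particular $E^x f(0) = f(x)$. On the other side, using the additive and scalar structure in $\mathsf{End}(\mathsf{Map}(\Z))$ to expand, we obtain
$$f(x) = \Sum{n=0}{x}\binom{x}{n}(\Delta^n f)(0).$$

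To convert this finite sum into the infinite sum asserted in the proposition, I invoke the fact noted in the introduction that $\binom{x}{n} = 0$ whenever $x \in \mathbb N_0$ and $n > x$, which is immediate from the product definition $\binom{x}{n} = \frac{x(x-1)\cdots(x-n+1)}{n!}$ (one of the factors equals $0$). Extending the summation range therefore introduces only zero terms and gives the claimed identity \eqref{sum-newt}. The argument contains no genuine obstacle: everything reduces to the binomial theorem in a ring where the two elements being expanded commute, with the passage to an infinite sum being purely cosmetic. The one point meriting care is that the formula relies on $x \in \mathbb N_0$, since for negative $x$ the factors $x, x-1, \ldots, x-n+1$ never vanish, the sum ceases to be finite, and the corresponding operator identity would require $E^{-1}$; restricting to $\mathbb N_0$ avoids this entirely.
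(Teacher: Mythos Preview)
Your proof is correct and follows essentially the same approach as the paper: both arguments exploit the operator identity $E=I+\Delta$, expand $E^x=(I+\Delta)^x$ via the binomial theorem for commuting ring elements, evaluate at $0$ using $E^xf(0)=f(x)$, and then note that the terms with $n>x$ vanish since $\binom{x}{n}=0$. The only cosmetic difference is that the paper writes the identity as $(\Delta+I)^x f(y)=f(y+x)$ and reads the equalities in the opposite direction.
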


\begin{proof}
 Iterating the identity $(\Delta+I)f(y)=f(y+1)$, for  $y\in \Z$, it follows that $(\Delta+I)^xf(y)=f(y+x)$ for $y\in\Z$ and $x\geq 0$, whence
\begin{align*}
\Sum{n=0}{\infty}(\Delta^nf)(0)\binom{x}{n}=
\left(\Sum{n=0}{x}\binom{x}{n}\Delta^n\right)f(0)=(\Delta+I)^xf(0)=f(x)
\end{align*} for all $x\in\mathbb N_0$.
\end{proof}

To deal with  general weight functions $w(X)$, we recall the  well-known fact that the integer valued polynomials   $\mathsf{Int}(\Z)\subseteq \Q[X]$ are a free abelian group with basis the binomial functions \cite{int-poly-survey}. This essentially means there is little loss of generality to only consider $w(X)=\binom{X}{t}$,  where $t\geq 0$,  when using  a weight function, or even simply $w(X)=X^t$ for $t\geq 0$ if linear independence is all that is required.

\begin{proposition} \label{prop-int-valued-binom-basis} $\mathsf{Int}(\Z)$ is a free abelian group with basis $\{\binom{X}{t}:\; t=0,1,\ldots\}$.
\end{proposition}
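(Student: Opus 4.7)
The plan is to reduce the statement to Newton's expansion (Proposition \ref{prop-newton-expan}) together with the $\mathbb{Q}$-linear independence of the binomials. First I would note that $\binom{X}{t}\in\Q[X]$ has degree exactly $t$ (with leading coefficient $1/t!$), so the family $\{\binom{X}{t}:t\geq 0\}$ is $\Q$-linearly independent in $\Q[X]$; in particular it is a $\Q$-basis of $\Q[X]$. This already yields the freeness part of the statement: any integer relation $\sum c_t\binom{X}{t}=0$ in $\mathsf{Int}(\Z)\subseteq\Q[X]$ forces all $c_t=0$.

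Next I would establish spanning. Given $f\in\mathsf{Int}(\Z)$, view the restriction $f|_{\Z}:\Z\rightarrow\Z$ as an element of $\mathsf{Map}(\Z)$. Because $\Z$ is closed under subtraction, the finite difference operator $\Delta=E-I$ preserves integer valuedness, so by induction $(\Delta^n f)(0)\in\Z$ for every $n\geq 0$. Moreover, applying $\Delta$ to a polynomial decreases the degree by at least one (since $(X+1)^k-X^k$ has degree $k-1$ for $k\geq 1$), so $\Delta^n f$ is the zero polynomial for $n>\deg f$, and in particular $(\Delta^n f)(0)=0$ for such $n$. Therefore the right-hand side of Newton's expansion is a finite sum and defines a polynomial
\[
g(X):=\sum_{n=0}^{\deg f}(\Delta^n f)(0)\binom{X}{n}\in\Q[X]
\]
with integer coefficients in the binomial basis.

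Finally, by Proposition \ref{prop-newton-expan}, $g(x)=f(x)$ for all $x\in\mathbb N_0$. Since two polynomials in $\Q[X]$ agreeing on the infinite set $\mathbb N_0$ must be identical, $f=g$ as elements of $\Q[X]$. This exhibits $f$ as a $\Z$-linear combination of the $\binom{X}{t}$, completing the proof.

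There is essentially no serious obstacle here: once Proposition \ref{prop-newton-expan} is in hand, the only points requiring care are (i) that $\Delta$ preserves integer-valuedness of maps, so the coefficients $(\Delta^n f)(0)$ are automatically integers, and (ii) that the Newton series terminates for polynomial $f$, so we may pass from the pointwise identity on $\mathbb N_0$ to an identity in $\Q[X]$. The freeness assertion is then immediate from degree considerations.
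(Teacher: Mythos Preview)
Your proof is correct. The paper does not actually include a proof of this proposition; it simply records it as a well-known fact with a citation to the literature on integer-valued polynomials. Your argument via Proposition~\ref{prop-newton-expan}, together with the observations that $\Delta$ preserves integer-valuedness and strictly lowers polynomial degree, is a standard and fully rigorous route to the result, and it fits naturally with the tools already developed in the paper.
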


Next, we come to the main step in Wilson's proof, which he modestly named a lemma. The case where $w(X)=1$ is the constant polynomial equal to $1$ is found in Wilson's original paper \cite{Wilson-ax-katz}. Exchanging the use of the non-weighted Weisman-Fleck congruence with its weighted version (Theorem \ref{thm-weisman-Fleck-weighted}) in Wilson's argument, one  obtains the following weighted version with no other major modifications needed. In order to obtain a more self-contained work, we include the details below, which may also be found in an unpublished paper of Zhi-Wei Sun \cite{Sun-Ax-Katz-Wilson}, who was the first to realize Wilson's ideas could readily be extended to include weights.

\begin{theorem}[Weighted Wilson's Lemma]\label{thm-wilson-approx}
Let $m\geq 1$ and $s\geq 0$ be integers, let $p\geq 2$ be prime, let $w(X)\in \Q[X]$ be an integer valued polynomial of degree $t\geq 0$, and let $f:\Z\rightarrow \Z$ be a map that is periodic with period $p^{s}$. Then there exists a rational polynomial $g(X)=\Sum{n=0}{d}a_n\binom{X}{n}\in\Q[X]$ with $a_n\in \Z$ and $d< (t+1)p^s+(m-1)\varphi(p^s)$ such that  \begin{align*}&
g(x)\equiv w\Big(\Big\lfloor \frac{x}{p^s}\Big\rfloor\Big)f(x)\mod p^m\quad\mbox{ for all $x\in \Z$},\quad\und\\ &a_n\equiv 0\mod p^\ell \quad \mbox{ for all $n\in [0,d]$}, \quad\mbox{where $\ell=\max\{0,\;\left\lceil\frac{n-(t+1)p^s+1}{\varphi(p^s)} \right\rceil$}\}.
\end{align*}
\end{theorem}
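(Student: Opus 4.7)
The plan is to follow Wilson's original strategy, substituting the Weighted Weisman--Fleck Congruence (Theorem~\ref{thm-weisman-Fleck-weighted}) for its non-weighted ancestor. First I would define the integer valued function $F:\Z\to\Z$ by $F(x)=w\bigl(\lfloor x/p^s\rfloor\bigr)f(x)$ and apply the Newton Expansion (Proposition~\ref{prop-newton-expan}) to obtain, for every $x\in\mathbb N_0$,
\[F(x)=\Sum{n=0}{\infty}a_n\binom{x}{n},\qquad a_n=(\Delta^nF)(0)=\Sum{i=0}{n}(-1)^{n-i}\binom{n}{i}F(i),\]
where the infinite series is effectively finite since $\binom{x}{n}=0$ whenever $n>x\in\mathbb N_0$.

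The next step controls the $p$-adic size of $a_n$. Extending the sum to all $i\geq 0$ (the extra terms vanish since $\binom{n}{i}=0$ for $i>n$), regrouping by the residue class $r\in[0,p^s-1]$ of $i$ modulo $p^s$, and using the identity $\lfloor i/p^s\rfloor=(i-r)/p^s$ together with the periodicity of $f$, one obtains
\[a_n=(-1)^n\Sum{r=0}{p^s-1}f(r)\underset{i\geq 0}{\Summ{i\equiv r\mod p^s}}(-1)^i\binom{n}{i}w\Big(\frac{i-r}{p^s}\Big).\]
Each inner sum is precisely the left hand side appearing in Theorem~\ref{thm-weisman-Fleck-weighted}, forcing $a_n\equiv 0\mod p^\ell$ with $\ell=\max\bigl\{0,\left\lceil(n-(t+1)p^s+1)/\varphi(p^s)\right\rceil\bigr\}$. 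Setting $d=(t+1)p^s+(m-1)\varphi(p^s)-1$, every $n>d$ satisfies $\ell\geq m$, so $a_n\equiv 0\mod p^m$ for such $n$. Defining $g(X)=\Sum{n=0}{d}a_n\binom{X}{n}$, the truncation error $F(x)-g(x)=\sum_{n>d}a_n\binom{x}{n}$ becomes, for every $x\in\mathbb N_0$, a finite sum of integers each divisible by $p^m$, yielding $g(x)\equiv F(x)\mod p^m$ on $\mathbb N_0$.

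The main obstacle is upgrading this congruence from $\mathbb N_0$ to all of $\Z$, since $F$ itself is not periodic (only $f$ is). The key observation is that for each fixed $r\in[0,p^s-1]$ the identity $\lfloor(r+p^sY)/p^s\rfloor=Y$ holds for \emph{every} integer $Y$, so the periodicity of $f$ gives $F(r+p^sy)=w(y)f(r)$ for all $y\in\Z$. Hence the polynomial
\[h_r(Y):=g(r+p^sY)-w(Y)f(r)\in\Q[Y]\]
satisfies $h_r(y)\equiv 0\mod p^m$ for every $y\in\mathbb N_0$ by the preceding step. Expanding $h_r$ via Newton as $h_r(Y)=\Sum{n=0}{\deg h_r}c_n\binom{Y}{n}$ (a polynomial identity valid on all of $\Q$), the coefficients $c_n=\Sum{i=0}{n}(-1)^{n-i}\binom{n}{i}h_r(i)$ are integers divisible by $p^m$; since $\binom{Y}{n}$ is integer valued on all of $\Z$, we conclude $h_r(y)\equiv 0\mod p^m$ for every $y\in\Z$. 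Running this argument over each residue class $r\in[0,p^s-1]$ yields $g(x)\equiv F(x)=w\bigl(\lfloor x/p^s\rfloor\bigr)f(x)\mod p^m$ for every $x\in\Z$, completing the proof.
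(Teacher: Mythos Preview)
Your proof is correct and matches the paper's argument through the main steps: defining $F(x)=w(\lfloor x/p^s\rfloor)f(x)$, expanding via Newton, regrouping by residue class modulo $p^s$, applying the Weighted Weisman--Fleck Congruence to bound $\vp_p(a_n)$, and truncating at $d=(t+1)p^s+(m-1)\varphi(p^s)-1$ to obtain $g(x)\equiv F(x)\mod p^m$ on $\mathbb N_0$.

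The one genuine difference is in the extension from $\mathbb N_0$ to all of $\Z$. The paper argues by shifting: for $x<0$ it picks $y\geq 0$ with $x+y\geq 0$ and $\vp_p(y)$ large enough that $\binom{x+y}{n}\equiv\binom{x}{n}\mod p^m$ for all $n\leq d$ and $w(\lfloor x/p^s\rfloor+y/p^s)\equiv w(\lfloor x/p^s\rfloor)\mod p^m$, then chains through $g(x)\equiv g(x+y)\equiv F(x+y)\equiv F(x)$. Your route is cleaner: fixing a residue $r\in[0,p^s-1]$, the difference $h_r(Y)=g(r+p^sY)-w(Y)f(r)$ is an honest \emph{polynomial} in $Y$, integer valued on $\Z$, and congruent to $0\mod p^m$ at every nonnegative integer; its finite Newton expansion then has all coefficients divisible by $p^m$, forcing $h_r(y)\equiv 0\mod p^m$ for every $y\in\Z$. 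This avoids the explicit tracking of $\vp_p(n!)$ and $\vp_p(t!)$ that the paper's shift argument requires, at the cost of introducing the auxiliary polynomials $h_r$. Both approaches are short and standard; yours is arguably the more transparent of the two.
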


\begin{proof} Define the map $h:\Z\rightarrow \Z$ by
 $$h(x)=w\Big(\Big\lfloor \frac{x}{p^s}\Big\rfloor\Big)f(x)\quad\mbox{ for $x\in \Z$}$$ and use Proposition \ref{prop-newton-expan} to write
\be\label{sun-day}w\Big(\Big\lfloor \frac{x}{p^s}\Big\rfloor\Big)f(x)=h(x)=\Sum{n=0}{\infty}(\Delta^nh)(0)\binom{x}{n}
\quad \mbox{ for all $x\in \mathbb N_0$}.\ee
Let $I,\,E,\Delta=E-I\in \mathsf{End}(\mathsf{Map}(\Z))$ be as defined earlier. Since $f$ is periodic with period $p^s$, we have $f(i)\equiv f(r)\mod p^s$ whenever $i\equiv r\mod p^s$.
 For any $n\geq 0$, it follows that \begin{align}\nn
(\Delta^nh)(0)&=((E-I)^nh)(0)=\left(\Big(\Sum{i=0}{n}\binom{n}{i}(-I)^{n-i}E^i
\Big)h\right)(0)=
\Sum{i=0}{n}(-1)^{n-i}\binom{n}{i}(E^i h)(0)\\\nn &=
\Sum{i=0}{n}(-1)^{n-i}\binom{n}{i}h(i)=\Sum{r=0}{p^s-1}\underset{i\geq 0}{\Summ{i\equiv r\mod p^s}}(-1)^{n-i}\binom{n}{i}w\Big(\Big\lfloor \frac{i}{p^s}\Big\rfloor \Big)f(i)\\\nn
\label{sun-down}&=\Sum{r=0}{p^s-1}f(r)\left(\underset{i\geq 0}{\Summ{i\equiv r\mod p^s}}(-1)^{n-i}\binom{n}{i}w\Big( \frac{i-r}{p^s}\Big)\right).
\end{align}
Applying Theorem \ref{thm-weisman-Fleck-weighted}, it follows that  \be\nn a_n:=(\Delta^nh)(0)\equiv 0\mod p^\ell, \quad\mbox{where $\ell=\max\{0,\;\left\lceil \frac{n-(t+1)p^{s}+1}{\varphi(p^s)}\right\rceil$\} }.\ee As a particular consequence, we have $a_n\equiv 0\mod p^m$ for all $n\geq (t+1)p^s+(m-1)\varphi(p^s)$. Combined with  \eqref{sun-day}, we obtain
\be\label{sun-dusk}g(x)\equiv h(x)=w\Big(\Big\lfloor\frac{x}{p^s}\Big\rfloor\Big)f(x)\mod p^m\quad\mbox{ for all $x\in\N_0$},\ee where
 $$g(X):=\Sum{n=0}{d}a_n\binom{X}{n}
\in\Q[X]\quad\und\quad d= (t+1)p^s+(m-1)\varphi(p^s)-1.$$
To complete the proof, we need to show  \eqref{sun-dusk} also holds for $x<0$.

 For $n\geq 0$ and $x,\,y\in \Z$, we have  $\binom{x+y}{n}=\binom{x}{n}+y\frac{z}{n!}$,
for some $z\in \Z$, whence\be\label{sunmore}\binom{x+y}{n}\equiv \binom{x}{n}\mod p^m\quad\mbox{ for any $x,\,y\in \Z$ with $\vp_p(y)\geq m+\vp_p(n!)$}.\ee Proposition \ref{prop-int-valued-binom-basis} implies  that $w(X)=\Sum{n=0}{t}b_n\binom{X}{n}$ for some $b_n\in\Z$. Combined with \eqref{sunmore}, we conclude that   \be\label{sunevenmore}w(x+y)\equiv w(x)\mod p^m\quad\mbox{ for any $x,\,y\in \Z$ with $\vp_p(y)\geq m+\vp_p(t!)$}.\ee
Let $x\in \Z$ be arbitrary and let $y\geq 0$ be an integer with $x+y\geq 0$ and  $$\vp_p(y)\geq \max\{s+m+\vp_p(t!),\,m+\vp_p(d!)\}.$$
  Then \begin{align*}g(x)&=\Sum{n=0}{d}a_n\binom{x}{n}\equiv \Sum{n=0}{d}a_n\binom{x+y}{n}=g(x+y)\equiv w\Big(\Big\lfloor \frac{x+y}{p^s}\Big\rfloor\Big)f(x+y)\\
&=w\Big(\Big\lfloor\frac{x}{p^s}\Big\rfloor+\frac{y}{p^s}\Big)f(x)\equiv w\Big(\Big\lfloor\frac{x}{p^s}\Big\rfloor\Big)f(x)\mod p^m,\end{align*}
 which establishes \eqref{sun-dusk}  for $x<0$, completing the proof.
\end{proof}

The following simple lemma is well-known (combine Fermat's Little Theorem \cite{niven-zuck-mont-nr-text} with \cite[Lemma 22.3]{G-book}).

\begin{lemma}
\label{lem-sumofpowers} Let $p\geq 2$ be  prime and let $m\geq 0$ be an integer. Then
$$\Summ{x\in \mathbb F_p}x^m=\left\{
                            \begin{array}{ll}
                              0 & \hbox{if $m\not\equiv 0\mod p-1$} \\
                              -1 & \hbox{if $m\equiv 0\mod p-1$.}
                            \end{array}
                          \right.$$
\end{lemma}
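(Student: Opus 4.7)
The plan is to reduce to the classical geometric series computation on the cyclic group $\F_p^*$. For $m \geq 1$, the $x = 0$ summand contributes $0^m = 0$, so the sum equals $\Summ{x \in \F_p^*} x^m$ (the case $m = 0$, where the lemma is typically not invoked, can be treated separately). If $(p-1) \mid m$, then Fermat's Little Theorem gives $x^{p-1} \equiv 1 \mod p$ for every $x \in \F_p^*$, hence $x^m = (x^{p-1})^{m/(p-1)} = 1$ in $\F_p$, and the sum collapses to $p-1 \equiv -1 \mod p$, matching the claim.

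If instead $(p-1) \nmid m$, I would fix a primitive root $g$ generating the cyclic group $\F_p^*$ and rewrite the sum as a geometric progression:
\[ \Summ{x \in \F_p^*} x^m = \Sum{k=0}{p-2}(g^m)^k = \frac{(g^m)^{p-1} - 1}{g^m - 1}. \]
The denominator is nonzero in $\F_p$ because $g$ has order exactly $p-1$ and $(p-1) \nmid m$ forces $g^m \neq 1$, while the numerator vanishes by Fermat's Little Theorem applied to $g$. Hence the sum equals $0$ in $\F_p$, completing both cases.

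There is no substantive obstacle in this argument; it relies only on the standard fact that $\F_p^*$ is cyclic of order $p-1$, together with Fermat's Little Theorem. The only mildly delicate point is the bookkeeping of the $x = 0$ term under the convention $0^0 := 1$, but this affects at most the boundary case $m = 0$, which is not relevant to the downstream applications of the lemma in Section~\ref{sec-alg}.
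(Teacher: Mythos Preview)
Your proof is correct and follows the standard approach; the paper itself does not give a proof of this lemma, instead citing it as well-known via Fermat's Little Theorem and \cite[Lemma~22.3]{G-book}, which is precisely the combination of ingredients (Fermat plus the cyclicity of $\F_p^*$) that you use. Your remark on the $m=0$ edge case is also appropriate, since under the paper's convention $0^0=1$ the sum for $m=0$ is $p=0$ rather than $-1$, but as you observe the only downstream use (in Lemma~\ref{lemm-wilson-ax-katz-summation-gen}) has exponents $k_j\leq p-2$, and there one only needs the sum to be divisible by $p$, which holds in all cases.
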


The next lemma is  a variation on Chevalley's key observation used in the proof of the Chevalley-Warning Theorem \cite{chevalley} \cite{warning} \cite{G-book} \cite{nat-book} \cite{Tao-Vu-book}. The case when all $\mathcal I_j=[0,p-1]$ is found in Wilson's original paper \cite{Wilson-ax-katz}, but the argument is sufficiently robust to also work when  replacing  $[0,p-1]$ with an arbitrary complete system of residues modulo $p$. As the added flexibility of being able to consider arbitrary complete system of residues is rather crucial, we include the details.

\begin{lemma}
\label{lemm-wilson-ax-katz-summation-gen}
Let $p\geq 2$ be prime, let $n\geq 1$, let $\mathcal B=\mathcal I_1\times \ldots \times \mathcal I_n$ with each $\mathcal I_j\subseteq \Z$ for $j\in [1,n]$ a complete system of residues  modulo  $p$, and suppose $f\in\Q[X_1,\ldots,X_n]$ is an integer valued polynomial with $\deg_j(f)\leq p-2$ for every $j\in[1,n]$, and  $\vp_p(c)\geq 0$ for every coefficient $c\in \Q$ of a monomial in $f(\textbf x)$.  Then  $$\Summ{\textbf a\in \mathcal  B}f(\textbf a)\equiv 0\mod p^n.$$
\end{lemma}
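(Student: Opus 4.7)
The plan is to reduce the claim to a single monomial by linearity and then factor the resulting sum over the box. Write $f(\mathbf x)=\sum_{\mathbf k} c_{\mathbf k} X_1^{k_1}\cdots X_n^{k_n}$, with $\mathbf k=(k_1,\ldots,k_n)$ running over tuples with $0\leq k_j\leq p-2$ for each $j$ and $c_{\mathbf k}\in\Q$ satisfying $\vp_p(c_{\mathbf k})\geq 0$. Since the box $\mathcal B$ is a Cartesian product, the sum factorizes:
\[
\sum_{\mathbf a\in\mathcal B}f(\mathbf a)=\sum_{\mathbf k}c_{\mathbf k}\prod_{j=1}^{n}\Big(\sum_{a\in\mathcal I_j}a^{k_j}\Big).
\]

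Next I would analyze each inner sum $S_j(k):=\sum_{a\in\mathcal I_j}a^{k}\in\Z$. Since $\mathcal I_j$ is a complete system of residues modulo $p$, reducing modulo $p$ gives $S_j(k)\equiv \sum_{x\in\F_p}x^{k}\pmod p$. By Lemma \ref{lem-sumofpowers}, this latter sum vanishes modulo $p$ unless $k\equiv 0\pmod{p-1}$. Under the hypothesis $0\leq k_j\leq p-2$, the only value with $k_j\equiv 0\pmod{p-1}$ is $k_j=0$; but then $S_j(0)=|\mathcal I_j|=p$ (using the convention $0^0:=1$ for the possible $0\in\mathcal I_j$). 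In either case, $p\mid S_j(k_j)$, so $\vp_p(S_j(k_j))\geq 1$ for each $j\in[1,n]$.

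Consequently each monomial contribution $c_{\mathbf k}\prod_{j=1}^{n}S_j(k_j)$ has $p$-adic valuation at least $n$, since $\vp_p(c_{\mathbf k})\geq 0$ and there are $n$ factors in the product each contributing at least one factor of $p$. Summing over $\mathbf k$ preserves this lower bound, so $\vp_p\!\Big(\sum_{\mathbf a\in\mathcal B}f(\mathbf a)\Big)\geq n$. Finally, since $f$ is integer valued and $\mathcal B\subseteq\Z^n$, the sum $\sum_{\mathbf a\in\mathcal B}f(\mathbf a)$ is an ordinary integer, and an integer with $p$-adic valuation at least $n$ is divisible by $p^n$, giving the claim.

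There is no real obstacle: the only subtle point is the bookkeeping that splits the single congruence class $k_j\equiv 0\pmod{p-1}$ allowed by the degree bound into the case $k_j=0$ (handled via $|\mathcal I_j|=p$ and the $0^0=1$ convention so the formula is valid even when $0\in\mathcal I_j$), which is what makes the argument work uniformly for an \emph{arbitrary} complete system of residues $\mathcal I_j$ and not merely $[0,p-1]$.
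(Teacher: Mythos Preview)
Your proof is correct and follows essentially the same approach as the paper's own argument: reduce to monomials by linearity, factor the sum over the Cartesian product, and use Lemma~\ref{lem-sumofpowers} together with the degree bound $k_j\le p-2$ to see that each factor $\sum_{a\in\mathcal I_j}a^{k_j}$ is divisible by $p$. The only cosmetic difference is that the paper peels off one variable at a time and iterates, whereas you factor all $n$ variables simultaneously; you also make the case $k_j=0$ (where $S_j(0)=|\mathcal I_j|=p$) more explicit than the paper does.
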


\begin{proof}
Let $g(\textbf x)=cX_1^{k_1}X_2^{k_2}\cdots X_n^{k_n}$ be an arbitrary monomial occurring in $f(\textbf x)$, so   $c_g\in \Q\setminus\{0\}$ and $\vp_p(c_g)\geq 0$ by hypothesis. Now
\begin{align*}\Summ{\textbf a\in \mathcal B}g(\textbf a)&=\Summ{\textbf (a_1,\ldots,a_n)\in \mathcal B}c_ga_1^{k_1}a_2^{k_2}\cdots a_n^{k_n}=\Summ{(a_1,\ldots,a_{n-1})\in \mathcal B'}\left(c_ga_1^{k_1}\cdots a_{n-1}^{k_{n-1}}\Summ{a_n\in \mathcal I_n}
a_n^{k_n}\right),\end{align*} where $\mathcal B'=\mathcal I_1\times\ldots\times \mathcal I_{n-1}$.
By hypothesis, we have $k_j\leq p-2$ for every $j\in[1,n]$. Combined with the hypothesis that $\mathcal I_n$ is a complete system of residues modulo $p$, we can apply  Lemma \ref{lem-sumofpowers} to conclude that  $\Summ{a_n\in \mathcal I_n}
a_n^{k_n}=b'p$ for some $b'\in \Z$. Consequently, $$\Summ{\textbf a\in \mathcal B}g(\textbf a)=b'p\Summ{\textbf a\in \mathcal B'}h(\textbf a),$$ where $h(\textbf x)=c_gX_1^{k_1}\cdots X_{n-1}^{k_{n-1}}\in\Q[X_1,\ldots,X_{n-1}]$.
Iterating this argument $n$ times, it follows that   $$\Summ{\textbf a\in \mathcal B}g(\textbf a)=c_gb_gp^n\quad\mbox{ for some $b_g\in\Z$}.$$
Thus $\Summ{\textbf a\in \mathcal B}f(\textbf a)=\Summ{g}\Summ{\textbf a\in \mathcal B} g(\textbf a)=\Big(\Summ{g} c_gb_g\Big)p^n$, where the sum $\Summ{g}$ is taken over all monomials $g$ occurring in $f$. Hence, since $f$ is integer valued with $b_g\in \Z$ and $\vp_p(c_g)\geq 0$ for all $g$, it follows that $\Summ{\textbf a\in \mathcal B}f(\textbf a)\equiv 0\mod p^n$, as desired.
\end{proof}

The final component in Wilson's argument is the following consequence of Lemma \ref{lemm-wilson-ax-katz-summation-gen}. Again, the case when all $\mathcal I_j=[0,p-1]$ is found in Wilson's original paper \cite{Wilson-ax-katz}, and the more general case simply requires using   Lemma \ref{lemm-wilson-ax-katz-summation-gen} in Wilson's original argument, with the details given below.

\begin{lemma}
\label{lemm-wilson-ax-katz-summation}
Let $p\geq 2$ be prime, let $n\geq 0$, let $\mathcal B=I_1\times\ldots\times I_n$ with each $I_j\subseteq \Z$ for $j\in[1,n]$ a  complete system of residues  modulo  $p$, let $f_1,\ldots,f_s\in \Z[X_1,\ldots,X_n]$ be  nonzero polynomials,  and suppose   \be\label{monomial-sum} f(\textbf x)=\binom{f_1(\textbf x)}{k_1}\binom{f_2(\textbf x)}{k_2}\cdots \binom{f_s(\textbf x)}{k_s}\in \Q[X_1,\ldots,X_n]\ee for some $k_1,\ldots, k_s\geq 0$ and $s\geq 1$. If $n\geq (m-1)+\frac{\deg f+1}{p-1}$, where $m\geq 1$, then $$\Summ{\textbf a\in \mathcal B}f(\textbf a)\equiv 0\mod p^m.$$
\end{lemma}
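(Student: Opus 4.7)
The plan is to expand $f$ in the multivariate Newton basis, invoke Lemma~\ref{lemm-wilson-ax-katz-summation-gen} to dispatch the terms whose Newton exponents all lie in $[0,p-2]$, and handle the remaining terms by a direct $p$-adic valuation estimate. The key elementary ingredient is that $\binom{X}{\ell}\mod p$ is a polynomial in $X$ of period $p$ whenever $\ell<p$.

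First, since $f(\textbf x) = \prod_{i=1}^{s}\binom{f_i(\textbf x)}{k_i}$ is a product of integer-valued polynomials, it is itself integer valued, so the multivariate analog of Proposition~\ref{prop-int-valued-binom-basis} (derived by iterating the one-variable case on the variables $X_1,\ldots,X_n$ in turn) gives a unique expansion
$$f(\textbf x) = \Summ{\textbf l \in \N_0^n} b_{\textbf l} \prod_{j=1}^{n}\binom{X_j}{\ell_j},\qquad b_{\textbf l}\in\Z,$$
with $\textbf l=(\ell_1,\ldots,\ell_n)$ constrained to $|\textbf l|:=\ell_1+\ldots+\ell_n\leq D:=\deg f$ (otherwise the unique leading monomial $\prod_j X_j^{\ell_j}$ of a basis element of total degree $>D$ would appear in $f$). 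Summing then factorizes as
$$\Summ{\textbf a \in \mathcal B}f(\textbf a) = \Summ{\textbf l} b_{\textbf l} \prod_{j=1}^{n} B_j(\ell_j),\qquad B_j(\ell):=\Summ{a\in\mathcal I_j}\binom{a}{\ell}.$$
Write $f = f^{\leq}+f^{>}$, where $f^\leq$ collects the summands with every $\ell_j\leq p-2$. Each such summand, when expanded in the monomial basis, has $\deg_j\leq p-2$ and coefficients in $\frac{1}{\prod_j\ell_j!}\Z$; since $\ell_j<p$ makes each $\ell_j!$ a $p$-unit, those coefficients are $p$-integral. Lemma~\ref{lemm-wilson-ax-katz-summation-gen} therefore gives $\Summ{\mathcal B}f^\leq\equiv 0\mod p^n$, and because the hypothesis forces $n\geq m$ (as $(D+1)/(p-1)>0$), this reduces to $\Summ{\mathcal B}f^\leq\equiv 0\mod p^m$.

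For $f^{>}$ I would argue directly from the divisibility $\vp_p(B_j(\ell))\geq 1$ for every $\ell\in[0,p-2]$. The case $\ell=0$ is trivial since $B_j(0)=|\mathcal I_j|=p$; for $1\leq\ell\leq p-2$, Vandermonde's identity
$$\binom{X+p}{\ell}-\binom{X}{\ell}=\Sum{k=1}{\ell}\binom{X}{\ell-k}\binom{p}{k}$$
combined with $p\mid\binom{p}{k}$ for $k\in[1,p-1]$ shows $\binom{X}{\ell}$ is periodic mod $p$ with period $p$, so summing over the complete system of residues $\mathcal I_j$ gives
$$B_j(\ell) \equiv \Sum{x=0}{p-1}\binom{x}{\ell} = \binom{p}{\ell+1} \equiv 0\mod p$$
by the hockey stick identity. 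For $\ell\geq p-1$ I use only the integrality bound $\vp_p(B_j(\ell))\geq 0$. Now each $\textbf l$ with $|\textbf l|\leq D$ has at most $\lfloor D/(p-1)\rfloor$ entries with $\ell_j\geq p-1$ (each such entry consumes at least $p-1$ units of $|\textbf l|$), so at least $n-\lfloor D/(p-1)\rfloor$ of the factors $B_j(\ell_j)$ each contribute a power of $p$. The hypothesis $n\geq(m-1)+(D+1)/(p-1)$ rearranges, for integer $n$, to $n\geq m+\lfloor D/(p-1)\rfloor$, making that count $\geq m$. Combined with $b_{\textbf l}\in\Z$, every term of $\Summ{\mathcal B}f^{>}$ has $\vp_p\geq m$, and adding the two pieces finishes the proof. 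The only nontrivial step is the period-$p$ congruence for $\binom{X}{\ell}$ with $\ell<p$; the integer arithmetic that converts the stated hypothesis into $n\geq m+\lfloor D/(p-1)\rfloor$ is routine.
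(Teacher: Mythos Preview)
Your proof is correct, and it takes a genuinely different route from the paper's. The paper first uses the Vandermonde-type identity $\binom{Y_1+\cdots+Y_t}{k}=\sum\binom{Y_1}{k_1}\cdots\binom{Y_t}{k_t}$ to reduce to the case where each $f_i$ is a single monomial; this guarantees a unique top monomial $h$ in $f$, and a pigeonhole argument on $\deg h$ isolates $m$ variables $X_1,\ldots,X_m$ with $\deg_j f\leq p-2$. Fixing the remaining variables then lets Lemma~\ref{lemm-wilson-ax-katz-summation-gen} finish. Your approach bypasses the monomial reduction entirely: you pass to the multivariate Newton basis with integer coefficients, factor the box-sum into products $\prod_j B_j(\ell_j)$, and count factors of $p$ term-by-term using $p\mid B_j(\ell)$ for $\ell\le p-2$. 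This is arguably cleaner and in fact proves the stronger statement that the conclusion holds for \emph{any} integer-valued $f\in\Q[X_1,\ldots,X_n]$ of degree $D$ satisfying the hypothesis, not only those of the specific product form~\eqref{monomial-sum}.

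One minor observation: your splitting $f=f^{\leq}+f^{>}$ is unnecessary. The counting argument you give for $f^{>}$ applies verbatim to every Newton term with $|\textbf l|\le D$ (when all $\ell_j\le p-2$ one simply gets $n\ge m$ factors of $p$), so the appeal to Lemma~\ref{lemm-wilson-ax-katz-summation-gen} for $f^{\leq}$ is redundant. The heart of your proof is just the identity $\sum_{x=0}^{p-1}\binom{x}{\ell}=\binom{p}{\ell+1}$ and the periodicity of $\binom{X}{\ell}\bmod p$ for $\ell<p$, together with the pigeonhole count; everything else follows.
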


\begin{proof}
For $k\geq0$ and $t\geq 1$, we utilize the polynomial identity \be\label{sum-id}\binom{Y_1+\ldots+Y_t}{k}=\underset{(k_1,\ldots,k_t)\in \mathbb N_0^t}{\Summ{k_1+\ldots+k_t=k}}\binom{Y_1}{k_1}\cdots \binom{Y_t}{k_t},\ee
which holds when each $Y_i>0$ is an integer by a basic combinatorial counting argument, and extends to the case when each $Y_i$ is a polynomial by noting that the difference of both sides is then a polynomial with all $\textbf a\in \mathbb N^t$ as roots. We can write each  $f_j(\textbf x)\in\Z[X_1,\ldots,X_n]$, for $j\in [1,s]$, as a sum of $t_j\geq 1$ nonzero monomials with integer coefficients, and then use the identity given in \eqref{sum-id} to write $f(\textbf x)$ as a sum of expressions of the form given in \eqref{monomial-sum} (with $s$ replaced by $\Sum{j=1}{s}t_j$ and the $k_i$ varying), with each such expression in the sum  individually satisfying the hypotheses of the lemma and having each  $f_j(\textbf x)$ occurring in a given expression replaced by a single nonzero monomial. As it would then suffice to prove the lemma individually for each of the expressions in this sum,
it follows that we can w.l.o.g. assume each $f_j(\textbf x)$ is itself a monomial.
As a result,  it follows that there is a unique monomial in $f(\textbf x)$ whose degree equals $\deg f$, namely, the monomial $$h(\textbf x):=\frac{1}{k_1!\cdots k_s!}f_1(\mathbf x)^{k_1}\cdots f_s(\mathbf x)^{k_s}.$$ Additionally, any  monomial $cX_1^{b_1}\cdots X_s^{b_s}$ occurring in $f(\textbf x)$ must have $b_j\leq \deg_j(h)$ for all $j\in [1,s]$.

By hypothesis,  $\deg f\leq (n-m+1)(p-1)-1$, which combined with the  Pigeonhole Principle means there  are at most $n-m$ variables $X_j$ having  $\deg_j(h(\textbf x))\geq p-1$. By re-indexing, we can w.l.o.g. assume that $\deg_j(h(\textbf x))\leq p-2$ for every $j\in [1,m]$. Since every monomial in $f(\textbf x)$ has its degree in the variable $X_j$ bounded by $\deg_j(h(\textbf x))$,  we conclude that \be\label{deg-bounding}\deg_j(f(\textbf x))\leq p-2\quad \mbox{ for all $j\in [1,m]$}.\ee
This has the useful consequence that any variable $X_j$ with $j\in [1,m]$ cannot  occur with positive degree in any monomial $f_i(\textbf x)$ having $k_i\geq p-1$.

We can write \be\label{impliy}\Summ{\textbf a\in \mathcal B}f(\textbf a)=\Summ{\textbf b\in \mathcal I_{m+1}\times\ldots \times \mathcal I_{n}}\Summ{\textbf c\in \mathcal I_1\times \ldots\times \mathcal I_m}
f_{\textbf b}(\textbf c),\ee
where $f_{\textbf b}(\textbf x)=f(X_1,\ldots, X_m,b_{m+1},\ldots, b_{n})\in \Q[X_{1},\ldots, X_m]$ for $\textbf b=(b_{m+1},\ldots, b_{n})$. Then \be\label{prod-def}f_\textbf b(\textbf x)=\binom{f_1(X_1,\ldots,X_m,b_{m+1},\ldots,b_n)}{k_1}\cdots \binom{f_s(X_1,\ldots,X_m,b_{m+1},\ldots,b_n)}{k_s}\ee is a polynomial in the variables $X_1,\ldots, X_m$. Moreover, in view of \eqref{deg-bounding}, we have $$\deg_j f_\textbf b\leq p-2\quad\mbox{ for all $j\in [1,m]$}.$$ From \eqref{prod-def} and the fact that $f_i\in\Z[X_1,\ldots,X_n]$ for all $i\in[1,s]$, we see that $f_\textbf b\in \Q[X_1,\ldots,X_m]$ is an integer valued polynomial.

Let $\textbf b=(b_{m+1},\ldots, b_{n})\in \mathcal I_{m+1}\times \ldots\times \mathcal I_n$ be arbitrary. In view of \eqref{prod-def}, $f_\textbf b(\textbf x)$ is a product of $s$ factors of the form
$\binom{f_i(X_1,\ldots,X_m,b_{m+1},\ldots,b_n)}{k_i}$, for $i\in [1,s]$. If $k_i\geq p$,  then none of the variables $X_1,\ldots, X_m$ occur with positive degree in $f_i(\textbf x)$, as already noted, meaning the factor $\binom{f_i(X_1,\ldots,X_m,b_{m+1},\ldots,b_n)}{k_i}$ is a constant, which must then be an integer since $\binom{f_i(\textbf x)}{k_i}$ is an integer valued polynomial (in view of $f_i\in\Z[X_1,\ldots,X_n]$).
From this, and the fact that all $f_i\in\Z[X_1,\ldots,X_n]$,  we conclude that the every coefficient $c$ of a monomial in $f_\textbf b(\textbf x)$ must have the denominator of its coefficient $c$ dividing $\prod_{i\in J}k_i!$, where $J\subseteq [1,s]$ is the subset of all indices $i\in[1,s]$ with $k_i\leq p-1$, which ensures that  $\vp_p(c)\geq 0$ (as $p$ is prime). Combined with the conclusions of the previous paragraph, we can  now apply Lemma \ref{lemm-wilson-ax-katz-summation-gen} to $f_\textbf b$ to conclude that $$\Summ{\textbf c\in \mathcal I_1\times\ldots\times \mathcal I_m}
f_{\textbf b}(\textbf c)\equiv 0\mod p^m\quad\mbox{for all $\textbf b\in \mathcal I_{m+1}\times\ldots\times \mathcal I_n$},$$ which combined with \eqref{impliy} yields the desired congruence.
\end{proof}

We can now complete the proof of Theorem \ref{thm-axkatz-gen}.

\begin{proof}[Proof of Theorem \ref{thm-axkatz-gen}] The hypotheses give  \begin{align} n>(m-1)\max_{i\in [1,s]}\left\{1,\;\frac{\varphi(p^{m_i})}{p-1}\deg f_i\right\}+ \Sum{i=1}{s}\frac{(t_i+1)p^{m_i}-1}{p-1}\deg f_i.\label{lagoon4}\end{align}

For each $j\in[1,s]$, apply  Theorem \ref{thm-wilson-approx} to the integer valued function with period $p^{m_j}$ which sends $0$ to $1$ and all elements of $[1,p^{m_j}-1]$ to $0$, using $w_j(X)$ as weight function, to find a rational polynomial $$g_j(X)=\Sum{i=0}{d_j} b^{(j)}_i\binom{X}{i}\in\Q[X],$$  with all $b_i^{(j)}\in\Z$ and $d_j\leq
(t_j+1)p^{m_j}+(m-1)\varphi(p^{m_j})-1$,  such that
\begin{align}&g_j(x)\equiv \left\{                                              \begin{array}{ll}
    w_j\big(\frac{x}{p^{m_j}}
\big)\mod p^m & \hbox{if $x\equiv 0\mod p^{m_j}$} \\
                                                   0 \mod p^m & \hbox{if $x\not\equiv 0\mod p^{m_j}$,}
                                                 \end{array}
                                               \right. \quad\und\\
& b^{(j)}_i\equiv 0\mod p^\ell, \quad\mbox{where \label{song} $\ell=\max\{0,\;\left\lceil\frac{i-(t_j+1)p^{m_j}+1}{\varphi(p^{m_j})} \right\rceil$}\}.\end{align}
In view of all definitions involved, \begin{align}\nn N&\equiv \Summ{\textbf a\in \mathcal B}g_1\big(f_1(\textbf a)\big)g_2\big(f_2(\textbf a)\big)\cdots g_s\big(f_s(\textbf a)\big)\mod p^m\\\nn
&=\Summ{\textbf a\in\mathcal B}\left(\Sum{i=0}{d_1}b^{(1)}_i\binom{f_1(\textbf a)}{i}\right)\left(\Sum{i=0}{d_2}b^{(2)}_i\binom{f_2(\textbf a)}{i}\right)\cdots \left(\Sum{i=0}{d_s}b^{(s)}_i\binom{f_s(\textbf a)}{i}\right)\\
&=\Summ{(k_1,\ldots,k_s)\in \prod_{i=1}^s[0,d_i]}b^{(1)}_{k_1}b^{(2)}_{k_2}\cdots b^{(s)}_{k_s}\Summ{\textbf a\in \mathcal B}\binom{f_1(\textbf a)}{k_1}\binom{f_2(\textbf a)}{k_2}\cdots \binom{f_s(\textbf a)}{k_s}.\label{V-cong}
\end{align}

It suffices to show each summand in \eqref{V-cong} is divisible by $p^m$. With this goal in mind,
let $(k_1,\ldots,k_s)\in \prod_{i=1}^s[0,d_i]$ be arbitrary.  For $j\in [1,s]$, define $\ell_j:=\max\{0,\,\left\lceil\frac{k_j-(t_j+1)p^{m_j}+1}{\varphi(p^{m_j})}
 \right\rceil\}\geq \frac{k_j-(t_j+1)p^{m_j}+1}{\varphi(p^{m_j})}$, in which case  \be\label{lagoon1}k_j\leq \ell_j\varphi(p^{m_j})+(t_j+1)p^{m_j}-1.\ee All summands in \eqref{V-cong} with $\ell_1+\ldots+\ell_s\geq m$ are congruent to $0$ modulo $p^m$ by \eqref{song}, since this ensures that $b_{k_1}^{(1)}\cdots b_{k_s}^{(s)}\equiv 0\mod p^m$. We need only consider those with  \be\label{lagoon3}\ell_1+\ldots+\ell_s=m-t\quad\mbox{ for some $t\geq 1$}.\ee In this case, \eqref{song} instead ensures that the coefficient $b^{(1)}_{k_1}b^{(2)}_{k_2}\cdots b^{(s)}_{k_s}$ is divisible by $p^{m-t}$, so we just need to show that the summation $\Summ{\textbf a\in \mathcal B}\binom{f_1(\textbf a)}{k_1}\binom{f_2(\textbf a)}{k_2}\cdots \binom{f_s(\textbf a)}{k_s}$ is divisible by $p^{t}$.

In view of \eqref{lagoon1}, \eqref{lagoon3}, \eqref{lagoon4} and $t\geq 1$, we have
\begin{align*}
&\deg\left(\binom{f_1(\textbf x)}{k_1}\binom{f_2(\textbf x)}{k_2}\cdots \binom{f_s(\textbf x)}{k_s}\right)=k_1\deg f_1+\ldots+k_s\deg f_s\\
&\leq \Sum{j=1}{s}\Big(\ell_j\varphi(p^{m_j})+(t_j+1)p^{m_j}-1\Big)\deg f_j
\\
&=(p-1)\Big(\Sum{i=1}{s}\ell_i\frac{\varphi(p^{m_i})}{p-1}\deg f_i+\Sum{i=1}{s}\frac{(t_i+1)p^{m_i}-1}{p-1}\deg f_i
\Big)\\
&\leq (p-1)\Big((\ell_1+\ldots+\ell_s)\max_{i\in [1,s]}\left\{1,\;\frac{\varphi(p^{m_i})}{p-1}\deg f_i\right\}+\Sum{i=1}{s}\frac{(t_i+1)p^{m_i}-1}{p-1}\deg f_i
\Big)\\
&=(p-1)\Big((m-1-(t-1))\max_{i\in [1,s]}\left\{1,\;\frac{\varphi(p^{m_i})}{p-1}\deg f_i\right\}+\Sum{i=1}{s}\frac{(t_i+1)p^{m_i}-1}{p-1}\deg f_i
\Big)
\\
&\nn < (p-1)\Big(n-(t-1)\max_{i\in [1,s]}\left\{1,\;\frac{\varphi(p^{m_i})}{p-1}\deg f_i\right\}\Big)\leq (p-1)(n+1-t),
\end{align*}
implying that $$n\geq (t-1)+\frac{\deg\left(\binom{f_1(\textbf x)}{k_1}\binom{f_2(\textbf x)}{k_2}\cdots \binom{f_s(\textbf x)}{k_s}\right)+1}{p-1}.$$ But now  Lemma \ref{lemm-wilson-ax-katz-summation} implies  that $\Summ{\textbf a\in \mathcal B}\binom{f_1(\textbf x)}{k_1}\binom{f_2(\textbf x)}{k_2}\cdots \binom{f_s(\textbf x)}{k_s}$ is divisible by $p^{t}$, completing the proof as already noted.
\end{proof}

To effectively use Theorem \ref{thm-axkatz-gen} requires a ``good'' choice for the  complete system of residues modulo $p$. This can generally be achieved by use of Hensel's Lemma \cite{niven-zuck-mont-nr-text}. We state one commonly used version below.

\begin{theorem}[Hensel's Lemma] \label{thm-hensels-lem}Let $p\geq 2$ be  prime, let $m\geq 1$ and $e\in [1,m]$ be integers,  and let $f(X)\in \Z[X]$ be a polynomial. If $f(x)\equiv 0\mod p^m$ and $f'(x)\not\equiv 0\mod p$, where $x\in \Z$, then there is some $y\in \Z$ with $$y\equiv x\mod p^m\quad\und\quad f(y)\equiv 0\mod p^{m+e}.$$ Moreover, the value of $y$ is uniquely determined modulo $p^{m+e}$.
\end{theorem}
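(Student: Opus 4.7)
The plan is to prove the result by induction on $e \geq 1$, lifting a root modulo $p^{m+k}$ to a root modulo $p^{m+k+1}$ at each step. Since the induction step is structurally identical to the base case $e=1$, all the work reduces to the following single lifting step: given $y_0 \in \Z$ with $y_0 \equiv x \mod p^m$ and $f(y_0) \equiv 0 \mod p^{m+k}$ for some $k \geq 0$, produce a $y_1 \in \Z$, unique modulo $p^{m+k+1}$, with $y_1 \equiv y_0 \mod p^{m+k}$ and $f(y_1) \equiv 0 \mod p^{m+k+1}$.

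For this step, I would seek $y_1$ in the form $y_1 = y_0 + tp^{m+k}$ with $t \in \Z$ to be determined, and apply the Taylor-type identity
\[
f(y_0 + tp^{m+k}) = f(y_0) + tp^{m+k} f'(y_0) + (tp^{m+k})^2 R(y_0, t),
\]
where $R(X,Y) \in \Z[X,Y]$ arises from expanding each monomial of $f$ via the binomial theorem. Because $2(m+k) \geq m+k+1$ (using $m \geq 1$), the quadratic tail vanishes modulo $p^{m+k+1}$, so the required congruence reduces to $f(y_0) + tp^{m+k} f'(y_0) \equiv 0 \mod p^{m+k+1}$. Writing $f(y_0) = a p^{m+k}$ and using the crucial observation that $f'(y_0) \equiv f'(x) \not\equiv 0 \mod p$ (since $y_0 \equiv x \mod p$ because $m \geq 1$), this becomes the linear congruence $a + t f'(y_0) \equiv 0 \mod p$, which admits a unique solution for $t$ modulo $p$ as $f'(y_0)$ is invertible mod $p$. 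This yields both the existence and the uniqueness of $y_1$ modulo $p^{m+k+1}$.

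Iterating the single-step procedure $e$ times, starting from $y_0 = x$, produces the required $y$, and the uniqueness modulo $p^{m+e}$ propagates from the uniqueness at each step of the iteration. No substantive obstacle arises; the only point to verify carefully is that the remainder $R(X,Y)$ in the expansion has integer coefficients, which follows monomial-by-monomial from expanding $(X+Y)^n$ by the binomial theorem, since $\binom{n}{k} \in \Z$. The whole argument hinges on the invertibility of $f'(x)$ modulo $p$, which lets us solve the key linear step, and on the quadratic gap in the Taylor expansion, which lets the higher-order terms be absorbed safely.
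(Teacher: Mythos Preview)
Your proof is correct and is the standard inductive lifting argument for Hensel's Lemma. Note, however, that the paper does not actually supply its own proof of this statement: Theorem~\ref{thm-hensels-lem} is simply stated as a well-known result with a citation to a number theory textbook, so there is no in-paper argument to compare against. Your approach matches the classical textbook proof referenced there.

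One incidental remark: your iterative argument lifts one power of $p$ at a time and therefore does not make use of the hypothesis $e\in[1,m]$ in the statement. That hypothesis is what allows a \emph{single-step} lift from a root modulo $p^m$ directly to one modulo $p^{m+e}$ (via the same Taylor expansion, where one needs $2m\geq m+e$ to kill the quadratic tail). Your route is equally valid and in fact shows the conclusion holds for all $e\geq 1$, not just $e\leq m$.
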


We conclude the section by giving the short derivation of Proposition \ref{prop-Ax-Katz-ChooseI} using Hensel's Lemma, which provides the appropriate choice for the complete system of residues for many combinatorial applications of Theorem \ref{thm-axkatz-gen}.

\begin{proof}[Proof of Proposition \ref{prop-Ax-Katz-ChooseI}]
  Let $z\in [1,p-1]$ be a primitive residue class modulo the prime $p$, meaning $\{0\}\cup \{z^i:\;i\in[1,p-1]\}$  is a complete system of residues modulo $p$ (since $\Z/p\Z$ is a finite field with cyclic multiplicative group, such $z$ exists) and $$z^{p-1}\equiv 1\mod p.$$   Let $$f(X)=X^{p-1}-1\in \Z[X]$$ and note that  $f'(x)=(p-1)x\equiv -x\not\equiv 0\mod p$ for any $x\in \Z$ with $x\not\equiv 0\mod p$.
For each $i\in [1,p-1]$, we have  $f(z^i)=(z^{p-1})^i-1\equiv 1^i-1=0\mod p$. Thus we can repeatedly apply  Hensel's Lemma (Theorem \ref{thm-hensels-lem}) to find  some $y_i\in [0,p^m-1]$ with \be\label{needprop}y_i\equiv z^i\not\equiv 0\mod p\quad\und\quad y_i^{p-1}-1=f(y_i)\equiv 0\mod p^m,\ee for all $i\in [1,p-1]$. Let $\mathcal I=\{0\}\cup \{y_i:\; i\in [1,p-1]\}$. Since $\{0\}\cup \{z^i:\;i\in[1,p-1]\}$  was a complete system of residues modulo $p$ with $y_i\equiv z^i\mod p$ for all $i$,  it follows that $\mathcal I$ remains a complete system of residues modulo $p$, and one with the needed properties in view of \eqref{needprop}.
\end{proof}

\section{Applications in Combinatorial Number Theory}

In this section, we give the proofs of the applications of Theorem \ref{thm-axkatz-gen}.

\begin{proposition}\label{prop-altsum}
Let $G$ be a finite abelian $p$-group with exponent $q>1$,  and let $S$ be a sequence of terms from $G$ with $|S|\geq m\frac{(p-1)q}{p}+\mathsf D^*(G)$, where $m\geq 0$. Then $$\Sum{j=0}{\infty}(p-1)^{j}N_{j}(S)\equiv 0\mod p^{m+1}.$$
\end{proposition}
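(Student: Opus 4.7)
My plan is to realize the sum $\sum_{j\geq 0}(p-1)^j N_j(S)$ as the cardinality $|V|$ of a common zero set over a carefully chosen box $\mathcal B$, and then invoke Theorem \ref{thm-axkatz-gen} with trivial weights $w_i=1$ (so $t_i=0$). Write $G=C_{n_1}\oplus\ldots\oplus C_{n_r}$ with $n_k=p^{m_k}$ and $1\leq m_1\leq\ldots\leq m_r=s$, so that $q=p^s$, fix a basis $(e_1,\ldots,e_r)$ of $G$, and for $S=g_1\bdot\ldots\bdot g_\ell$ expand $g_i=\sum_k g_{i,k}e_k$ with integer lifts $g_{i,k}\in[0,n_k-1]$. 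The first key step is to apply Proposition \ref{prop-Ax-Katz-ChooseI} with $m=s$ to obtain a complete system of residues $\mathcal I\subseteq[0,p^s-1]$ modulo $p$ with $0\in\mathcal I$ and $a^{p-1}\equiv 1\mod q$ for every nonzero $a\in\mathcal I$, and then set $\mathcal B=\mathcal I^\ell$.

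Next I would introduce, for each coordinate $k\in[1,r]$, the polynomial $f_k(\textbf x)=\sum_{i=1}^{\ell}g_{i,k}X_i^{p-1}\in\Z[X_1,\ldots,X_\ell]$, trivially perturbing one coefficient by $n_k$ in the degenerate case where $f_k$ vanishes identically so as to meet the nonvanishing hypothesis of Theorem \ref{thm-axkatz-gen} without altering $V$. The point of the choice of $\mathcal I$ is that $X^{p-1}$ now acts as an exact indicator of nonzero support: writing $I(\textbf a):=\{i\in[1,\ell]:a_i\ne 0\}$ for $\textbf a\in\mathcal B$, each $a_i^{p-1}$ is either $0$ or congruent to $1$ modulo $q$, giving $f_k(\textbf a)\equiv\sum_{i\in I(\textbf a)}g_{i,k}\mod n_k$. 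With $m_i:=m_k$ in Theorem \ref{thm-axkatz-gen}, this makes $\textbf a\in V$ equivalent to the subsequence ${\prod}^{\bullet}_{i\in I(\textbf a)}g_i$ being zero-sum in $G$, a condition depending only on $I(\textbf a)$. Since each subset $I\subseteq[1,\ell]$ is realized as $I(\textbf a)$ by exactly $(p-1)^{|I|}$ vectors $\textbf a\in\mathcal B$, summing yields $|V|=\sum_{j\geq 0}(p-1)^j N_j(S)$.

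The final step is the parameter match. With $\deg f_k=p-1$ and $\varphi(p^{m_k})=(p-1)p^{m_k-1}$, one routinely computes $\max_{k\in[1,r]}\frac{\varphi(p^{m_k})}{p-1}\deg f_k=p^{s-1}(p-1)=\frac{(p-1)q}{p}$ and $\sum_{k=1}^{r}\frac{p^{m_k}-1}{p-1}\deg f_k=\sum_{k=1}^{r}(n_k-1)=\mathsf D^*(G)-1$. Since $m\cdot\frac{(p-1)q}{p}$ is a nonnegative integer, the hypothesis $|S|\geq m\frac{(p-1)q}{p}+\mathsf D^*(G)$ is exactly the strict degree inequality required by Theorem \ref{thm-axkatz-gen} with target modulus $p^{m+1}$, and that theorem then delivers $|V|\equiv 0\mod p^{m+1}$, which is the desired congruence.

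The conceptual heart of the argument, and effectively the only real obstacle, is the first step: arranging via Hensel's Lemma that $X^{p-1}$ behaves as an exact indicator of nonzero support modulo the \emph{full} exponent $q$ of $G$, rather than merely modulo $p$ as Fermat's Little Theorem by itself would provide. Once this enhanced $\mathcal I$ is secured, identifying $|V|$ with the alternating sum is forced by the construction, and matching the parameters in Theorem \ref{thm-axkatz-gen} is straightforward arithmetic.
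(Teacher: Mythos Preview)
Your proposal is correct and follows essentially the same approach as the paper's own proof: define the same polynomials $f_k(\textbf x)=\sum_i g_{i,k}X_i^{p-1}$, use Proposition \ref{prop-Ax-Katz-ChooseI} to select $\mathcal I$ so that $X^{p-1}$ becomes an exact indicator modulo $q$, identify $|V|$ with $\sum_{j\geq 0}(p-1)^jN_j(S)$ via the support map, and verify the degree inequality in Theorem \ref{thm-axkatz-gen} with trivial weights and target modulus $p^{m+1}$. You even handle a small edge case (perturbing a coefficient by $n_k$ when $f_k$ would otherwise vanish) that the paper's proof does not explicitly address.
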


\begin{proof}
Write $G=C_{q_1}\oplus \ldots\oplus C_{q_r}$ with each $q_i$ a power of $p$ and $$1<q_1\leq \ldots\leq q_r=q.$$ Then $\mathsf D^*(G)=\Sum{i=1}{r}(q_i-1)+1$. Let $(e_1,\ldots,e_r)$ be a basis for $G$ with $\ord(e_i)=q_i$ for $i\in[1,r]$.
Let $S=g_1\bdot \ldots\bdot g_\ell$, so  $\ell=|S|\geq m\frac{(p-1)q}{p}+\mathsf D^*(G)$.
 For each $i\in [1,\ell]$, write $$g_i=\Sum{j=1}{r}a_i^{(j)}e_j\quad\mbox{ with $a_i^{(j)}\in [0,q_j-1]$}. $$
Let $$f_j(\textbf x)=\Sum{i=1}{\ell}a_i^{(j)}X_i^{p-1}\in \Z[X_1,\ldots,X_\ell], \quad\mbox{ for $j\in[1,r]$}.$$
In view of Proposition \ref{prop-Ax-Katz-ChooseI}, let $\mathcal I\subseteq [0,q-1]$ be a complete system of residues modulo $p$ such that \be\label{fermat-inproof-aa}x^{p-1}\equiv
\left\{
  \begin{array}{ll}
    1 \mod q& \hbox{if $x\not\equiv 0\mod p$} \\
    0\mod q & \hbox{if $x\equiv 0\mod p$,}
  \end{array}
\right.\quad\mbox{ for every $x\in \mathcal I$}.\ee
Observe that  $\max_{j\in [1,r]}\big\{1,\;\frac{\varphi(q_j)}{p-1}\deg f_j\big\}=\max_{j\in [1,r]}\big\{\varphi(q_j)\big\}=\varphi(q)=\frac{(p-1)q}{p}$ and
\begin{align*}\ell=|S|\geq m\max_{j\in [1,r]}\big\{1,\;\frac{\varphi(q_j)}{p-1}\deg f_j\big\}+\Sum{j=1}{r}\frac{q_j-1}{p-1}\deg f_j+1.\end{align*} Thus we can apply Theorem  \ref{thm-axkatz-gen}, with $m$ taken to be $m+1$,  taking $\mathcal I_j=\mathcal I$ for all $j$, and using the polynomials $f_1,\ldots,f_r$, prime powers $q_1,\ldots,q_r=q$, and weight functions  $w_j(X)=1$ for all $j\in [1,r]$.
As a result, letting $$V=\{\textbf a\in \mathcal I^\ell:\; f_j(\textbf a)\equiv 0\mod q_j\;\mbox{ for all $j\in [1,r]$}\},$$
it follows that \be\label{equiv}|V|\equiv 0\mod p^{m+1}.\ee Let  us next describe what $|V|$  equals in terms of the zero-sum subsequences of $S$.

Associate to each $\textbf a\in \mathcal I^\ell$ the subsequence $S_\textbf a=\prod_{j\in I_\textbf a}^\bullet g_j$, where $I_\textbf a\subseteq [1,\ell]$ consists of all $j\in [1,\ell]$ for which the $j$-th coordinate of $\textbf a$ is nonzero modulo $p$.
Thus the nonzero (modulo $p$) terms in $\mathcal I$ ``select'' the terms included in the sequence $S_\textbf a$.
In view of \eqref{fermat-inproof-aa}, the conditions $f_j(\textbf a)\equiv 0\mod q_j$ in the definition of $V$, for $j\in[1,r]$, restrict to tuples $\textbf a\in \mathcal I^\ell$ for which the associated sequence $S_\textbf a$ is zero-sum.  This means that  the tuples $\textbf a\in V$ are precisely those whose associated sequence $S_\textbf a$ is a zero-sum subsequence, in which case  $|S_\textbf a|=j$ for some $j\geq 0$.
Moreover, each zero-sum subsequence of length $j$ is associated to exactly $(p-1)^{j}$ tuples $\textbf a\in \mathcal I^\ell$, for there are $(p-1)$ elements of $\mathcal I$ that are nonzero modulo $p$, each of which selects one term in $S_\textbf a$, while the unique element of $\mathcal I$ congruent to zero is the only way to \emph{not} select a term in $S_\textbf a$. As a result, $|V|=\Sum{j=0}{\infty}(p-1)^{j}N_{j}(S)$,  which combined with \eqref{equiv} yields the desired conclusion.
\end{proof}

We now can complete the proof regarding the Davenport Constant. Note, the original proof also proceeds by first deriving Proposition \ref{prop-altsum}, so the proof below continues as usual.

\begin{proof}[Proof of Theorem \ref{thm-dav-pgroup}]Let $G=C_{q_1}\oplus \ldots\oplus C_{q_s}$ and
let $(e_1,\ldots,e_s)$ be a basis for $G$ with $\ord(e_i)=q_i$ for all $i\in [1,s]$. We can assume $G$ is nontrivial else $\mathsf D(G)=\mathsf D^*(G)=1$. Now $\mathsf D^*(G)=1+\Sum{i=1}{s}(q_i-1)$ and the sequence $\prod^\bullet_{i\in [1,s]}e_i^{[q_i-1]}$ is zero-sum free, showing $\mathsf D(G)\geq \mathsf D^*(G)$. To show the upper bound, let $S$ be a sequence of terms from $G$ with length $\mathsf D^*(G)$. Assuming by contradiction that $S$ is zero-sum free, we obtain $N_i(S)=0$ for all $i>0$, in which case Proposition \ref{prop-altsum} applied with $m=0$ yields the contradiction $1=N_0\equiv 0\mod p$.
\end{proof}

As a minor variation on Proposition \ref{prop-altsum}, we have the following result.

\begin{proposition}\label{prop-altsum-q}
Let $G$ be a finite abelian $p$-group with exponent $q>1$,  let $\alpha\in [0,q-1]$, let $t\geq 1$, and let $S$ be a sequence of terms from $G$ with $|S|\geq m\frac{(p-1)q}{p}+tq-1+\mathsf D^*(G)$, where $m\geq 0$. Then $$\Sum{j=0}{\infty}(p-1)^{jq+\alpha}\Big(j^iN_{jq+\alpha}(S)\Big)\equiv 0\mod p^{m+1},\quad \mbox{ for every $i\in [0,t-1]$}.$$
\end{proposition}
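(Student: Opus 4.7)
The plan is to modify the proof of Proposition \ref{prop-altsum} by introducing one additional polynomial that forces $|S_\textbf a|\equiv\alpha\pmod q$ and a polynomial weight $w_0(X)=X^i$ that extracts the factor $j^i$. Retaining the setup of that proof—$G=C_{q_1}\oplus\ldots\oplus C_{q_r}$ with basis $(e_1,\ldots,e_r)$ and $\ord(e_j)=q_j$, sequence $S=g_1\bdot\ldots\bdot g_\ell$ with $g_k=\sum_j a_k^{(j)}e_j$, polynomials $f_j(\textbf x)=\sum_k a_k^{(j)}X_k^{p-1}$ for $j\in[1,r]$, and $\mathcal I\subseteq[0,q-1]$ the complete system of residues modulo $p$ from Proposition \ref{prop-Ax-Katz-ChooseI} satisfying $y^{p-1}\equiv 1\pmod q$ for every $y\in\mathcal I\setminus\{0\}$—adjoin the polynomial $f_0(\textbf x)=\sum_{k=1}^{\ell}X_k^{p-1}-\alpha$ with modulus $q$. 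For each $i\in[0,t-1]$ apply Theorem \ref{thm-axkatz-gen} (with its $m$ replaced by $m+1$) to $f_0,f_1,\ldots,f_r$ using weights $w_0(X)=X^i$ and $w_j=1$ for $j\in[1,r]$. Since every $f_j$ has degree $p-1$ and $\max_j\varphi(q_j)=\varphi(q)=\frac{(p-1)q}{p}$, the dimensional hypothesis reduces to
\[|S|\geq m\tfrac{(p-1)q}{p}+(i+1)q+\mathsf D^*(G)-1,\]
which is supplied by the hypothesis of the proposition for every $i\leq t-1$. The output is
\[N_i:=\sum_{\textbf a\in V_\alpha}\bigl(f_0(\textbf a)/q\bigr)^i\equiv 0\pmod{p^{m+1}},\quad\mbox{where}\quad V_\alpha:=\{\textbf a\in V:\;f_0(\textbf a)\equiv 0\pmod q\}.\]

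To translate these congruences into vanishing of the target quantities, set $L(y):=(y^{p-1}-1)/q\in\Z$ for $y\in\mathcal I\setminus\{0\}$ and $L(0):=0$. For every $\textbf a\in V_\alpha$ corresponding to a zero-sum subsequence $T=S_\textbf a$ of length $|T|=jq+\alpha$, we have $f_0(\textbf a)/q=j+\sum_{k\in I_\textbf a}L(a_k)$. Summing $\bigl(j+\sum L(a_k)\bigr)^i$ over the $(p-1)^{|T|}$ tuples $\textbf a$ with $S_\textbf a=T$, binomial expansion combined with a multinomial calculation (over $(b_1,\ldots,b_{|T|})\in[1,p-1]^{|T|}$) yields $(p-1)^{|T|}R_i(j)$, where $R_i(j)\in\Z_{(p)}[j]$ is a polynomial in $j$ of degree exactly $i$ whose leading coefficient simplifies to $\bigl(1+qP_1/(p-1)\bigr)^i$ with $P_1:=\sum_{b=1}^{p-1}L(y_b)$. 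Since $q$ is a positive power of $p$, this leading coefficient is congruent to $1$ modulo $p$ and is therefore a unit in $\Z/p^{m+1}\Z$.

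Summing over all zero-sum subsequences $T$ then gives
\[N_i=\sum_{k=0}^{i}r_{i,k}A_k\quad\mbox{for each}\quad i\in[0,t-1],\]
where $A_k:=\sum_j(p-1)^{jq+\alpha}j^k N_{jq+\alpha}(S)$ and the diagonal coefficient $r_{i,i}$ is a unit modulo $p^{m+1}$. This is a lower-triangular system in $A_0,\ldots,A_{t-1}$, so the proof concludes by induction on $i$: the base case $i=0$ gives $A_0=N_0\equiv 0\pmod{p^{m+1}}$, and at step $i$ the inductive hypothesis combined with $N_i\equiv 0\pmod{p^{m+1}}$ forces $r_{i,i}A_i\equiv 0\pmod{p^{m+1}}$, whence $A_i\equiv 0\pmod{p^{m+1}}$ after multiplying by $r_{i,i}^{-1}$. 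The main technical obstacle is the multinomial calculation identifying the leading coefficient of $R_i$ as $\bigl(1+qP_1/(p-1)\bigr)^i$; beyond that, everything reduces to routine bookkeeping.
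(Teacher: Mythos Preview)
Your argument is correct, but it takes a more laborious route than the paper's. Both proofs introduce the extra polynomial $f_0(\textbf x)=\sum_k X_k^{p-1}-\alpha$ with modulus $q$ and apply Theorem~\ref{thm-axkatz-gen} with weight $X^i$ on that factor. The difference lies in the choice of $\mathcal I$. The paper invokes Proposition~\ref{prop-Ax-Katz-ChooseI} with modulus $qp^{m+1}$ rather than $q$, so that $x^{p-1}\equiv 1\pmod{qp^{m+1}}$ for every nonzero $x\in\mathcal I$. With this stronger $\mathcal I$ one has $f_0(\textbf a)/q\equiv j\pmod{p^{m+1}}$ on the nose for each $\textbf a\in V_\alpha$ with $|S_\textbf a|=jq+\alpha$, so the weight $w_i(f_0(\textbf a)/q)\equiv j^i\pmod{p^{m+1}}$ directly, and the weighted count is immediately $\sum_j(p-1)^{jq+\alpha}j^iN_{jq+\alpha}(S)\pmod{p^{m+1}}$---no error terms, no $L(y)$, no triangular system.

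Your choice of the weaker $\mathcal I$ (congruence only modulo $q$) introduces the residues $L(y)=(y^{p-1}-1)/q$, which forces you into the moment computation identifying the leading coefficient of $R_i$ and then the lower-triangular inversion. That all works (the key point being that $E[S_n^l]$ is a polynomial of degree $l$ in $n$ with $\Z_{(p)}$-coefficients, and substituting $n=jq+\alpha$ gives the stated leading coefficient $(1+qP_1/(p-1))^i\equiv 1\pmod p$), but it is precisely the complication that lifting $\mathcal I$ modulo $qp^{m+1}$ is designed to eliminate.
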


\begin{proof}
Write $G=(\Z/q_1\Z)\oplus \ldots\oplus (\Z/q_r\Z)$ with each $q_i$ a power of $p$ and $$1<q_1\leq \ldots\leq q_r=q_{r+1}:=q.$$ Then $\mathsf D^*(G)=\Sum{i=1}{r}(q_i-1)+1$. Let $(e_1,\ldots,e_r)$ be a basis for $G$ with $\ord(e_i)=q_i$ for $i\in[1,r]$.
Let $S=g_1\bdot \ldots\bdot g_\ell$, so  $\ell=|S|\geq m\frac{(p-1)q}{p}+tq-1+\mathsf D^*(G)$.
 For each $i\in [1,\ell]$, write $$g_i=\Sum{j=1}{r}a_i^{(j)}e_j\quad\mbox{ with $a_i^{(j)}\in [0,q_j-1]$}. $$
Let $$f_j(\textbf x)=\Sum{i=1}{\ell}a_i^{(j)}X_i^{p-1}\in \Z[X_1,\ldots,X_\ell], \quad\mbox{ for $j\in[1,r]$}.$$
Let $$f_{r+1}(\textbf x)=\Sum{i=1}{\ell}X_i^{p-1}-\alpha\in \Z[X_1,\ldots,X_\ell].$$
For $i\in [0,t-1]$, let $$w_i(X)=X^i\in \Z[X].$$
In view of Proposition \ref{prop-Ax-Katz-ChooseI}, let $\mathcal I\subseteq [0,qp^{m+1}-1]$ be a complete system of residues modulo $p$ such that \be\label{fermat-inproof-a}x^{p-1}\equiv
\left\{
  \begin{array}{ll}
    1 \mod qp^{m+1}& \hbox{if $x\not\equiv 0\mod p$} \\
    0\mod qp^{m+1} & \hbox{if $x\equiv 0\mod p$,}
  \end{array}
\right.\quad\mbox{ for every $x\in \mathcal I$}.\ee
Observe that  $\max_{j\in [1,r+1]}\big\{1,\;\frac{\varphi(q_j)}{p-1}\deg f_j\big\}=\max_{j\in [1,r+1]}\big\{\varphi(q_j)\big\}=\varphi(q)=\frac{(p-1)q}{p}$ and
\begin{align*}\ell=|S|&\geq m\max_{j\in [1,r+1]}\big\{1,\;\frac{\varphi(q_j)}{p-1}\deg f_j\big\}+\frac{tq-1}{p-1}\deg f_{r+1}+\Sum{j=1}{r}\frac{q_j-1}{p-1}\deg f_j+1.\end{align*} Thus, for each $i\in [0,t-1]$, we can apply Theorem  \ref{thm-axkatz-gen}, with $m$ taken to be $m+1$,  taking $\mathcal I_j=\mathcal I$ for all $j$, and using the polynomials $f_1,\ldots,f_r,f_{r+1}$, weights $\underbrace{w_0,\ldots,w_0}_r,w_i$, and prime powers $q_1,\ldots,q_r,q_{r+1}=q$.
As a result, letting $$V=\{\textbf a\in \mathcal I^\ell:\; f_j(\textbf a)\equiv 0\mod q_j\;\mbox{ for all $j\in [1,r+1]$}\},$$
it follows that the weighted size of $V$ is congruent to $0$ modulo $p^{m+1}$, for each $i\in [0,t-1]$. Let  us next describe what this size equals.

Associate to each $\textbf a\in \mathcal I^\ell$ the subsequence $S_\textbf a=\prod_{j\in I_\textbf a}^\bullet g_j$, where $I_\textbf a\subseteq [1,\ell]$ consists of all $j\in [1,\ell]$ for which the $j$-th coordinate of $\textbf a$ is nonzero modulo $p$. In view of \eqref{fermat-inproof-a}, the conditions $f_j(\textbf a)\equiv 0\mod q_j$ in the definition of $V$, for $j\in[1,r]$, restrict to tuples $\textbf a\in \mathcal I^\ell$ for which the associated sequence $S_\textbf a$ is zero-sum. Likewise, the additional condition $f_{r+1}(\textbf a)\equiv 0\mod q$ further restricts to tuples $\textbf a\in \mathcal I^\ell$ whose associated sequence $S_\textbf a$ has length $|S_\textbf a|=|I_\textbf a|\equiv \alpha\mod q$. This means that  the tuples $\textbf a\in V$ are precisely those whose associated sequence $S_\textbf a$ is a zero-sum subsequence of length $|S_\textbf a|\equiv \alpha\mod q$, meaning $|S_\textbf a|=jq+\alpha$ for some $j\geq 0$.
Moreover, each zero-sum subsequence of length $jq+\alpha$ is associated to exactly $(p-1)^{jq+\alpha}$ tuples $\textbf a\in \mathcal I^\ell$, and the weighted size of each such tuple is $w_i(j)\equiv j^i\mod p^{m+1}$ (in view of \eqref{fermat-inproof-a}). As a result, for $i\in [0,t-1]$, the weighted size of $V$ equals $\Sum{j=0}{\infty}(p-1)^{jq+\alpha}\Big(j^iN_{jq+\alpha}(S)\Big)$ modulo $p^{m+1}$,  meaning the conclusion of Theorem \ref{thm-axkatz-gen} is precisely the desired conclusion of the proposition.
\end{proof}

We now give the proof of the Kemnitz Conjecture, which contains Alon and Dubiner's  argument that $N_{3p}(S)\neq 0$ implies $N_p(S)\neq 0$ \cite{alon-dubiner-org}. We remark that it would also be possible to derive the congruences below using the higher order $p$ divisibility of $|V|$ in Theorem \ref{thm-axkatz-gen} (combined with combinatorial double counting arguments of the type used by Reiher \cite{reiher-kemnitz-conj}) rather than the weight functions. However, using the weight functions directly is  simpler.

\begin{proof}[Proof of Theorem \ref{thm-kemnitzconj}]
Let $G=C_p^2$ with $(e_1,e_2)$ a basis for $G$. Note that  $0^{[p-1]}\bdot e_1^{[p-1]}\bdot e_2^{[p-1]}\bdot (e_1+e_2)^{[p-1]}$ is a sequence  of $4p-4$ terms from $G$ containing no $p$-term zero-sum subsequence, showing $\mathsf s_p(C_p^2)\geq 4p-3$. To show the upper bound, assume by contradiction that  $S$ is a sequence of terms from $G$ with $|S|=4p-3$ and  $0\notin\Sigma_{p}(S)$. If $p=2$, then $|S|= 4p-3=5$ ensures via the Pigeonhole Principle that $S$ contains a term $g$ with multiplicity at least two, in which case $g^{[2]}$ will be a $p$-term zero-sum subsequence, contrary to assumption. Therefore we can assume $p\geq 3$.

If $T\mid S$ is any subsequence with $|T|\geq 3p-2$, then Proposition  \ref{prop-altsum-q} (applied with $\alpha=0$, $m=0$ and $t=1$) implies that $N_0(T)-N_p(T)+N_{2p}(T)-N_{3p}(T)\equiv 0\mod p$. In particular, since $N_p(S)=0$ by assumption, it follows that any zero-sum subsequence $T\mid S$ with $|T|=3p$ has $N_{2p}(T\bdot g^{[-1]})\equiv -N_0(T\bdot g^{[-1]})=-1\mod p$, for any $g\in\supp(T)$, ensuring that $T\bdot g^{[-1]}$ has a zero-sum subsequence $R$ of length $2p$. However, the complement of $R$ in $T$ would then be a zero-sum subsequence of length $|T|-|R|=p$, contradicting that $N_p(S)=0$. Therefore we instead conclude that \be\label{p2pzero}N_p(S)=N_{3p}(S)=0,\ee and now Proposition  \ref{prop-altsum-q} implies that
\be\label{np-1}N_{2p}(T)\equiv -1\mod p\quad\mbox{ for all $T\mid S$ with $|T|\geq 3p-2$}.\ee For $j\geq 0$, let
$$N_j=N_j(S).$$

Since $|S|\geq 3p-2$, Proposition \ref{prop-altsum-q} (applied with $\alpha=p-1$, $m=0$ and $t=1$) implies that
\be\label{f1} N_{p-1}-N_{2p-1}+N_{3p-1}\equiv 0\mod p\ee

Let $T\mid S$ be an arbitrary zero-sum sequence with $|T|=3p-1$. Then $N_{p-1}(T)=N_{2p}(T)\equiv -1 \mod p$ by \eqref{np-1}, with the first equality holding since the complement in $T$ of a zero-sum subsequence of $T$ is also zero-sum. Thus $\Summ{T}N_{p-1}(T)\equiv -N_{3p-1}\mod p$, where the sum is taken over all zero-sum subsequences $T\mid S$ with $|T|=3p-1$. On the other hand, every zero-sum subsequence $R\mid S$ with $|R|=p-1$ is contained in exactly $N_{2p}(S\bdot R^{[-1]})$ zeros-sum subsequences $T\mid S$ with $|T|=3p-1$. Since $|S\bdot R^{[-1]}|=3p-2$, \eqref{np-1} ensures that $N_{2p}(S\bdot R^{[-1]})\equiv -1\mod p$ for any such $R$, in which case $-N_{3p-1}\equiv \Summ{T}N_{p-1}(T)=\Summ{R}N_{2p}(S\bdot R^{[-1]})\equiv -N_{p-1}\mod p$, where the second sum is taken over all zero-sum subsequences $R\mid S$ with $|R|=p-1$.  Hence
\be\label{f2} N_{p-1}\equiv N_{3p-1}\mod p.\ee

Observe that $N_j(S\bdot 0)=N_j+N_{j-1}$ for every $j>0$. Thus, since $|S\bdot 0|=|S|+1= 4p-2$,  applying Proposition \ref{prop-altsum-q} (with $\alpha=0$, $m=0$ and $t=2$) to $S\bdot 0$ implies
$$N_p+N_{p-1}-2N_{2p}-2N_{2p-1}+3N_{3p-1}+3N_{3p}\equiv 0\mod p.$$ We have $N_{2p}\equiv -1\mod p$ by \eqref{np-1}, and $N_p=N_{3p}=0$ by \eqref{p2pzero}. Thus
\be\label{f3} N_{p-1}-2N_{2p-1}+3N_{3p-1}\equiv -2\mod p.\ee

The equations \eqref{f1}, \eqref{f2} and \eqref{f3} form a system of $3$ linear equations in the variables $N_{p-1}$, $N_{2p-1}$ and $N_{3p-1}$ over the field $\Z/p\Z$. However, for $p\geq 3$, basic linear algebra shows this system to be inconsistent, yielding a proof concluding contradiction.
\end{proof}

The remainder of the section is devoted to the constant $\mathsf s_{k\exp(G)}(G)$. We begin with the refinement to the result obtained via R\'onyai's method.

\begin{proof}[Proof of Theorem \ref{thm-alg-expconsequence}]
Letting $X'\subseteq X$ be the subset consisting of the smallest $d+m$ elements in $X$, we have $\max X'\leq \max X-(|X|-d-m)$.
Since $\max X'<\min (X\setminus X')$, it follows that \eqref{det-hyp} also holds for $X'$.
If the result holds whenever $|X|=d+m$, then applying this case to $X'$ yields \begin{align*}\mathsf s_{X\cdot q}(G)\leq s_{X'\cdot q}(G)&\leq \big(\max X'-d-\frac{m}{p}+1\big)q+\mathsf D^*(G)-1\\&\leq \big(\max X-|X|+\frac{m(p-1)}{p}+1\big)q+\mathsf D^*(G)-1\\
&\leq\big(\max X+1-\frac{m}{p}\big)q-r,\end{align*} with the third inequality in view of the hypothesis $|X|\geq d+m$, as desired. Therefore it suffices to handle the case when $|X|=d+m$, which we now assume. We need to show
$$\mathsf s_{ X\cdot q}(G)\leq \big(k-d-\frac{m}{p}+1\big)q+\mathsf D^*(G)-1, $$ where $ k=\max X$. Let $\{x_1,\ldots,x_s\}=[1,k]\setminus X$, where $s=k-d-m$ and $1\leq x_1<\ldots<x_s<k$.


Write $G=C_{q_1}\oplus \ldots\oplus C_{q_r}$ with each $q_i$ a power of $p$ and $$1<q_1\leq \ldots\leq q_r=q_{r+1}:=q.$$ Then $\mathsf D^*(G)=\Sum{i=1}{r}(q_i-1)+1$. Let $(e_1,\ldots,e_r)$ be a basis for $G$ with $\ord(e_i)=q_i$ for $i\in[1,r]$.
Let $S=g_1\bdot \ldots\bdot g_\ell$ be a sequence of terms from $G$ with $|S|=\ell=(k-d-\frac{m}{p}+1)q+\mathsf D^*(G)-1$.
We have  \be\label{k-est}\left\lfloor\frac{|S|}{q}\right\rfloor\leq k-d+\left\lfloor 1+\frac{\mathsf D^*(G)-1}{q}\right\rfloor=k.\ee For each $i\in [1,\ell]$, write $$g_i=\Sum{j=1}{r}a_i^{(j)}e_j\quad\mbox{ with $a_i^{(j)}\in [0,q_j-1]$}. $$

Let $$f_j(\textbf x)=\Sum{i=1}{\ell}a_i^{(j)}X_i^{p-1}\in \Z[X_1,\ldots,X_\ell], \quad\mbox{ for $j\in[1,r]$}.$$
Let $$f_{r+1}(\textbf x)=\Sum{i=1}{\ell}X_i^{p-1}\in \Z[X_1,\ldots,X_\ell].$$
For $i\in [0,k-d-m]$, let $$w_i(X)=X^i\in \Z[X].$$
In view of Proposition \ref{prop-Ax-Katz-ChooseI}, let $\mathcal I\subseteq [0,qp^{m+1}-1]$ be a complete system of residues modulo $p$ such that \be\label{fermat-inproof}x^{p-1}\equiv
\left\{
  \begin{array}{ll}
    1 \mod qp^{m+1}& \hbox{if $x\not\equiv 0\mod p$} \\
    0\mod qp^{m+1} & \hbox{if $x\equiv 0\mod p$,}
  \end{array}
\right.\quad\mbox{ for every $x\in \mathcal I$}.\ee
Observe that  $\max_{j\in [1,r+1]}\big\{1,\;\frac{\varphi(q_j)}{p-1}\deg f_j\big\}=\max_{j\in [1,r+1]}\big\{\varphi(q_j)\big\}=\varphi(q)=(p-1)\frac{q}{p}$ and
\begin{align*}\ell=|S|&=m(p-1)\frac{q}{p}+
\Sum{j=1}{r}(q_j-1)+(k-d-m+1)q\\
&=m\max_{j\in [1,r+1]}\big\{1,\;\frac{\varphi(q_j)}{p-1}\deg f_j\big\}+\Sum{j=1}{r}\frac{q_j-1}{p-1}\deg f_j+\frac{(k-d-m+1)q-1}{p-1}\deg f_{r+1}+1.\end{align*} Thus, for each $i\in [0,k-d-m]$, we can apply Theorem  \ref{thm-axkatz-gen}, with $m$ taken to be $m+1$,  taking $\mathcal I_j=\mathcal I$ for all $j$, and using the polynomials $f_1,\ldots,f_r,f_{r+1}$, weights $\underbrace{w_0,\ldots,w_0}_r,w_i$, and prime powers $q_1,\ldots,q_r,q$.
As a result, letting $$V=\{\textbf a\in \mathcal I^\ell:\; f_j(\textbf a)\equiv 0\mod q_j\;\mbox{ for all $j\in [1,r+1]$}\},$$
it follows that the weighted size of $V$ is congruent to $0$ modulo $p^{m+1}$, for each $i\in [0,k-d-m]$. Let  us next describe what this size equals.

Let $$N_j:=N_{jq}(S)\quad\mbox{ for $j\in [0,k]$}.$$ Let $i\in [0,k-d-m]$ be arbitrary. Associate to each $\textbf a\in \mathcal I^\ell$ the subsequence $S_\textbf a=\prod_{j\in I_\textbf a}^\bullet g_j$, where $I_\textbf a\subseteq [1,\ell]$ consists of all $j\in [1,\ell]$ for which the $j$-th coordinate of $\textbf a$ is nonzero modulo $p$. In view of \eqref{fermat-inproof}, the conditions $f_j(\textbf a)\equiv 0\mod q_j$, for $j\in[1,r]$, restrict to tuples $\textbf a\in \mathcal I^\ell$ for which the associated sequence $S_\textbf a$ is zero-sum. Likewise, the additional condition $f_{r+1}(\textbf a)\equiv 0\mod q$ further restricts to tuples $\textbf a\in \mathcal I^\ell$ whose associated sequence $S_\textbf a$ has length $|S_\textbf a|=|I_\textbf a|\equiv 0\mod q$. This means that  the tuples $\textbf a\in V$ are precisely those whose associated sequence $S_\textbf a$ is a zero-sum subsequence of length $|S_\textbf a|\equiv 0\mod q$, meaning $|S_\textbf a|=jq$ for some $j\in[0,k]$ (in view of \eqref{k-est}).
Moreover, each zero-sum subsequence of length $jq$ is associated to exactly $(p-1)^{jq}$ tuples $\textbf a\in \mathcal I^\ell$, and the weighted size of each such tuple is $w_i(j)\equiv j^i\mod p^{m+1}$ (in view of \eqref{fermat-inproof}). As a result, for $i\in [0,k-d-m]$, the weighted size of $V$ equals $\Sum{j=0}{k}j^i(p-1)^{jq}N_j\mod p^{m+1}$, meaning the conclusion of Theorem \ref{thm-axkatz-gen} is that
$$(p-1)^{q}N_1+(p-1)^{2q}N_2+\ldots +(p-1)^{jq}N_j+\ldots+(p-1)^{kq}N_k\equiv -N_0=-1 \mod p^{m+1}$$ and $$(p-1)^{q}N_1+2^i(p-1)^{2q}N_2+\ldots +j^i(p-1)^{jq}N_j+\ldots+k^i(p-1)^{kq}N_k\equiv 0 \mod p^{m+1},$$ for every  $i\in [1,k-d-m]$.

Assuming by contradiction that $S$ has no zero-sum subsequence of length $kq$ with $k\in X$, it follows that $N_j=0$ for all $j\in X$. This leaves us with a system of $k-d-m+1$ linear equations, one for each $i\in [0,k-d-m]$,  in the $k-d-m$ variables $N_j$, where $j\in [1,k]\setminus X$, over the ring $R=\Z/p^{m+1}\Z$. We proceed to show this system is inconsistent, which will complete the proof.

Let $A'$ be $(k-d-m+1)\times (k-d-m)$ matrix, with rows indexed by $i\in [0,k-d-m]$, columns indexed by $j\in [1,k]\setminus X$, and $(i,j)$-th entry equal to $j^i(p-1)^{jq}$, and let $\textbf y$ be the column vector $[N_j]_{j\in [1,k]\setminus X}$. Then the above system of linear equations
can be written as $A'\textbf y\equiv [-1,0,\ldots,0]\mod p^{m+1}$. To show this system is inconsistent, it suffices to show a nonzero (modulo $p^{m+1}$) multiple  the first row of $A'$ can be written as a linear combination of the remaining rows. To this end, let $A=[j^i(p-1)^{jq}]_{i\in [1,k-d-m],j\in [1,k]\setminus X}$ be the $(k-d-m)\times (k-d-m)$ matrix obtained from $A'$ by removing the first row. We continue by calculating $\det A$. Note that $A$ can be obtained from the matrix $B=[j^i]_{i\in [0,k-d-m-1],j\in [1,k]\setminus X}$ by multiplying each  $j$-th column of $B$ by $j(p-1)^{jq}$. Thus $$\det A=\Big(\prod_{j\in [1,k]\setminus X}j(p-1)^{jq}\Big)\det B=\Big(\prod_{j=1}^{s}x_j(p-1)^{x_jq}\Big)\det B,$$ where we recall that $[1,k]\setminus X=\{x_1,\ldots,x_s\}$ with $x_1<\ldots<x_s$ (by hypothesis). However, note that $B$ is simply a Vandermonde matrix, whose well-known determinant (see \cite{garrett-algtext}) equals $\det B=\prod_{1\leq i<j\leq s}(x_j-x_i)$. It follows that
$$\det A=\Big(\prod_{j=1}^{s}x_j(p-1)^{x_jq}\Big)\Big(\prod_{1\leq i<j\leq s}(x_j-x_i)\Big)\not\equiv0\mod p^{m+1},$$ with this determinant being nonzero by hypothesis. In consequence,  the rows of $A$ are linearly independent over $\Q$, meaning there is some $\Q$-linear combination of the rows of $A$ equal to the first row in $A'$.
Moreover, since the entries of $A'$ are integers, Cramer's Rule (see \cite{garrett-algtext}) ensures that each coefficient in this linear combination has it denominator dividing $\det A$. By clearing denominators, it then follows that there is a $\Z$-linear combination of the rows of $A$ equal to  the first row of $A'$ multiplied by the integer  $\det A\not\equiv 0\mod p^{m+1}$. Reducing modulo $p^{m+1}$, we obtain a linear combination of the rows of $A$ equal to a nonzero (modulo $p^{m+1}$) multiple of the first row of $A'$, which shows that the system of linear equations is inconsistent, completing the proof as noted earlier.
\end{proof}

The following is the main step in the proof of Theorem \ref{thm-sk-gen}.

\begin{proposition}\label{prop-large-d}
Let $G$ be a finite abelian $p$-group with exponent $q$,  let $d=\left \lceil \frac{\mathsf D^*(G)}{q}\right\rceil$, and let $k$ be an integer such that $\frac{d(d-1)}{2}+1\leq k\leq p.$
Then $$\mathsf s_{kq}(G)\leq kq+\mathsf D^*(G)-1.$$
\end{proposition}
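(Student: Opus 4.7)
My plan is to argue by contradiction. Suppose $S$ is a sequence of $kq + \mathsf D^*(G) - 1$ terms from $G$ with no zero-sum subsequence of length $kq$, and derive a contradiction through iterated applications of Theorem \ref{thm-alg-expconsequence}, constructing a collection of disjoint zero-sum subsequences of $S$ whose lengths eventually combine to exactly $kq$.

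The first and central choice is the subset $X_0 = \{k - \binom{i}{2} : i = 1, \ldots, d\} = \{k,\; k-1,\; k-3,\; k-6,\; \ldots,\; k - d(d-1)/2\}$. This has $|X_0| = d$ distinct positive integers, with $\max X_0 = k$ and $\min X_0 = k - \binom{d}{2} \geq 1$ by the hypothesis $k > d(d-1)/2$. The complement $[1,k] \setminus X_0$ consists of positive integers at most $k \leq p$, and $p$ cannot lie in the complement: if $k < p$ then every complement element is strictly less than $p$, while if $k = p$ then $p \in X_0$ itself. Hence every factor in the Vandermonde-type product of \eqref{det-hyp} is coprime to $p$, and the determinant condition holds with $m = 0$. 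Theorem \ref{thm-alg-expconsequence} then yields $\mathsf s_{X_0 \cdot q}(G) \leq (k - d + 1)q + \mathsf D^*(G) - 1 \leq |S|$, so $S$ contains a zero-sum subsequence $T_1$ of length $j_1 q$ with $j_1 \in X_0$. By the contradiction hypothesis $j_1 \neq k$, hence $j_1 = k - \binom{i_1}{2}$ for some $i_1 \in [2,d]$. Setting $a_1 = \binom{i_1}{2} \in \{1,3,6,\ldots,d(d-1)/2\}$, the residual $S_1 := T_1^{[-1]} \bdot S$ has length $a_1 q + \mathsf D^*(G) - 1$ and can have no zero-sum subsequence of length $a_1 q$, since concatenating such a subsequence with $T_1$ would produce one of length $kq$ in $S$.

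The plan then is to iterate: apply Theorem \ref{thm-alg-expconsequence} to $S_1$ with a new subset $X_1$ of size $d$ designed around the smaller target $a_1$, structured so that the theorem either returns $j_2 = a_1$ (yielding the desired contradiction by concatenation with $T_1$) or else returns a strictly smaller triangular target $a_2 = \binom{i_2}{2}$ with $i_2 < i_1$. Since the natural triangular arrangement $\{a_1 - \binom{i'}{2}\}$ does not by itself provide $d$ positive integers when $a_1 < d(d-1)/2 + 1$, the set $X_1$ must be supplemented with additional elements chosen so that (i) the determinant condition \eqref{det-hyp} continues to hold mod $p$, and (ii) the supplementary lengths either exceed what a zero-sum subsequence of $S_1$ can attain, or admit further reductions that themselves lead to a zero-sum subsequence of $S$ of length $kq$. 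Each iteration strictly descends through the triangular numbers $i_1 > i_2 > \cdots$, so the process terminates in at most $d - 1$ steps at $a = 0$, yielding the desired contradiction.

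The main obstacle is the precise construction of the iterated subsets $X_j$, so that at every step $|X_j| \geq d$, the determinant condition \eqref{det-hyp} remains valid modulo $p$, and the returned length is forced into the triangular-descent pattern. The hypothesis $k > d(d-1)/2$ is exactly the threshold needed for $\min X_0 \geq 1$ at the initial step, while the constraint $k \leq p$ guarantees that all Vandermonde factors arising in the complements of $X_j$ remain coprime to $p$. Handling the recursive step requires augmenting the natural triangular configuration with judiciously chosen extra lengths and combining this with a careful bookkeeping of the accumulated subsequences $T_1,T_2,\ldots$ to ensure that at termination their union forms a zero-sum subsequence of $S$ of length exactly $kq$.
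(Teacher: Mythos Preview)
Your first step is correct: with $X_0 = \{k - \binom{i}{2} : i \in [1,d]\}$ you obtain a zero-sum $T_1$ of length $(k-a_1)q$ with $a_1 = \binom{i_1}{2}$ for some $i_1 \in [2,d]$. The difficulty you flag in the recursive step, however, is not merely a technical detail---it is a genuine obstruction that your plan does not overcome. At stage~$j$ the residual $S_j$ has length $a_jq + \mathsf D^*(G)-1 = (a_j+d)q - r$, so Theorem~\ref{thm-alg-expconsequence} (with $m=0$) requires $\max X_j \le a_j + |X_j| - 1$. Your natural set $\{a_j - \binom{i'}{2}: 1 \le i' \le i_j-1\}$ has only $i_j-1 < d$ elements, so you must add at least $d-i_j+1$ supplementary points. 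If a supplementary point $x$ is to be ``unattainable'' in $S_j$, you need $xq > |S_j|$, i.e.\ $x \ge a_j + d$; but then $\max X_j \ge a_j+d$ forces $|X_j| \ge d+1$, and a counting check shows you can never add enough unattainable points without violating $\max X_j - |X_j| + 1 \le a_j$. Hence your supplementary points must lie in $[1,a_j+d-1]$ and \emph{can} be returned by the theorem---but if the returned length is one of these, you have no mechanism to continue the triangular descent, and no way to combine such a $T_{j+1}$ with $T_1,\ldots,T_j$ to hit $kq$ exactly. The phrase ``admit further reductions'' is where the argument is missing, not where it is merely unwritten.

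The paper sidesteps this by abandoning the single-chain descent. Instead it proves (Claim~A) that $S$ contains \emph{disjoint} zero-sum subsequences $T_1,\ldots,T_{d-1}$ with $|T_i|=iq$ for every $i\in[1,d-1]$, built up one $i$ at a time using sets of the form $X = ([1,d-1]\setminus Y)\cup\{k-y,\,k-y+y_1,\ldots,k-y+y_s\}$, where $Y=\{y_1,\ldots,y_s\}$ records the lengths already found. The point is that possessing \emph{all} of $q,2q,\ldots,(d-1)q$ as disjoint zero-sum lengths makes every integer in $[0,\binom{d}{2}]\cdot q$ available as a sum, so a final application with $X=[k-\binom{d}{2},\,k-\binom{d}{2}+d-1]$ finishes immediately. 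Your triangular scheme tries to reach $kq$ along one path and cannot absorb an off-path return; the paper's scheme collects enough building blocks that any return in the final step can be completed.
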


\begin{proof} If $q=1$, then $G$ is trivial with $\mathsf s_{kq}(G)=kq=kq+\mathsf D^*(G)-1$, as desired. Therefore we can assume $q>1$.
Let $r\in [1,q]$ be the integer such that $d=\frac{\mathsf D^*(G)+r-1}{q}$. Note that  $d\geq 1$.
Assume by contradiction that $S$ is a sequence of terms from $G$ with $$0\notin \Sigma_{kq}(S)\quad\und\quad |S|=kq+\mathsf D^*(G)-1=(k+d)q-r.$$

\textbf{Claim A:} There are disjoint subsequences $T_1\bdot\ldots\bdot T_{d-1}\mid S$ such that each $T_i$ is zero-sum with $|T_i|=iq$, for every $i\in [1,d-1]$.

\begin{proof}
Let $Y\subseteq [1,d-1]$ be a maximal subset (possibly empty) such that there are disjoint subsequences $\prod^\bullet_{i\in Y}T_i\mid S$ with each  $T_i$ is zero-sum  and $|T_i|=iq$, for every $i\in Y$. To establish the claim, we need to show $Y=[1,d-1]$. If $d=1$, then the claim is trivial taking $Y=\emptyset$, so we can assume $d\geq 2$.

We begin by showing $|Y|\geq 1$. To this end, let $X=[1,d-1]\cup \{k\}$. In view of $k\geq \frac{d(d-1)}{2}+1\geq d\geq 1$, we have $X\subseteq \mathbb N$ and  $|X|=d$. In view of $k\leq p$, we have $[1,\max X]\setminus X=[d,k-1]\subseteq [d,p-1]$. Thus, since $|S|=(k+d)q-r\geq (k+1)q-r$ (as $d\geq 1$), we can apply Theorem \ref{thm-alg-expconsequence} with $X=[1,d-1]\cup\{k\}$ and $m=0$ to conclude that there is some zero-sum subsequence $T\mid S$ with $|T|\in ([1,d-1]\cup \{k\})\cdot q$. Since $0\notin \Sigma_{kq}(S)$, it thus follows that $|T|=iq$ for some $i\in [1,d-1]$, and taking $T_i=T$ and $Y=\{i\}$ now shows that $|Y|\geq 1$. The claim is now complete unless $d\geq 3$.

We continue by showing that $|Y|\geq 2$. If this fails, then we have $Y=\{y_1\}$ for some $y_1\in [1,d-1]$, and there is a zero-sum subsequence $T_1\mid S$ with $|T_1|=y_1q$. Since $0\notin \Sigma_{kq}(S)$, we have
\be\label{A-1}0\notin \Sigma_{\{(k-y_1),k\}\cdot q}(T_1^{[-1]}\bdot S).\ee Let $X=\big([1,d-1]\setminus\{y_1\}\big)\cup \{k-y_1\}\cup \{k\}$. Since $k\geq \frac{d(d-1)}{2}+1\geq 2(d-1)$  and $y_1\in [1,d-1]$, we have $X\subseteq \mathbb N$ with $|X|=d$. Since $k\leq p$, we have $[1,\max X]\setminus X\subseteq [1,k-1]\subseteq [1,p-1]$. Since $y_1\leq d-1$, we have $|T_1^{[-1]}\bdot S|=(k-y_1+d)q-r\geq (k+1)q-r$. As a result, we can apply Theorem \ref{thm-alg-expconsequence} to $T_1^{[-1]}\bdot S$ with $X=\big([1,d-1]\setminus\{y_1\}\big)\cup \{k-y_1\}\cup \{k\}$ and $m=0$ to find a zero-sum subsequence $T_2\mid T_1^{[-1]}\bdot S$ with $|T_2|=y_2q$ for some $y_2\in [1,d-1]\setminus \{y_1\}$ (in view of \eqref{A-1}). But now the set $\{y_1, y_2\}$ can be taken for $Y$, showing that  $|Y|\geq 2$. The claim is now complete unless $d\geq 4$.

In view of the our prior work, let $s:=|Y|\geq 2$, let $Y=\{y_1,\ldots,y_s\}$, and let $T_1\bdot\ldots\bdot T_s\mid S$ with each $T_i$ a zero-sum subsequence of length $|T_i|=y_iq$ with $y_i\in [1,d-1]$, for every $i\in [1,s]$. Assume by contradiction that $2\leq s\leq d-2$. Let $y=y_1+\ldots+y_s$ and let $$\max( [1,d-1]\setminus Y)=d-s_0,\quad\mbox{ where $s_0\in [1,s+1]$}.$$  Observe that \begin{align}\label{Abus} y&\leq \Sum{i=1}{s+1}(d-i)-(d-s_0)=\frac{s(2d-s-3)}{2}+s_0-1\leq \frac{d(d-1)}{2}-1\leq k-2,\end{align} with the final inequality holding by hypothesis.
Let $T^*=y_1\bdot\ldots\bdot y_s$, which is a sequence of terms from $\Z$.
Since $0\notin \Sigma_{kq}( S)$, we have \be\label{Bbus}0\notin \Sigma_{(k-t)q}\big((T_1\bdot\ldots\bdot T_s)^{[-1]}\bdot S\big), \quad\mbox{for every $t\in \Sigma(T^*)\cap [1,k-1]$}.\ee
Since $s\geq 2$, we have  $$y\in \Sigma(T^*)\quad\und\quad y-y_i=y_1+\ldots+y_{i-1}+y_{i+1}+\ldots+y_s\in\Sigma(T^*),\quad\mbox{ for every $i\in [1,s]$}.$$
Hence, in view of \eqref{Abus} and $y_1,\ldots,y_s\geq 1$, it follows that $y,y-y_1,\ldots,y-y_s\in \Sigma(T^*)\cap [1,k-1]$ are distinct elements. Thus \eqref{Bbus} implies that \be\label{CCbus}0\notin \Sigma_{\{(k-y),(k-y+y_1),\ldots,(k-y+y_s)\}\cdot q}\big((T_1\bdot\ldots\bdot T_s)^{[-1]}\bdot S\big).\ee

Now let $X=\big([1,d-1]\setminus\{y_1,\ldots,y_s\}\big)\cup  \{k-y,\, k-y+y_1,\,\ldots,\, k-y+y_s\}$. By definition of $s_0$, we have $\max \big([1,d-1]\setminus\{y_1,\ldots,y_s\}\big)=d-s_0$. If $k-y\leq d-s_0$, then \eqref{Abus} and  $s\leq d-2$ yield  $$k\leq d-s_0+y\leq d+\frac{s(2d-s-3)}{2}-1\leq \frac{d(d-1)}{2},$$   contrary to  hypothesis. Therefore, we must instead have $k-y> d-s_0$, which ensures that
\begin{align*}\max \big([1,d-1]\setminus\{y_1,\ldots,y_s\}\big)&<\min\Big(\{k-y,\, k-y+y_1,\,\ldots,\, k-y+y_s\}\Big)\quad \und\\
&|X|=d.\end{align*}
In view of $k\leq p$, we have $[1,\max X]\setminus X\subseteq [1,k-2]\subseteq [1,p-2]$. We also have $|(T_1\bdot\ldots\bdot T_s)^{[-1]}\bdot S|=|S|-yq=(k-y+d)q-r$. As  a result, in view of $y_1,\ldots,y_s\in [1,d-1]$, it follows that we can apply Theorem \ref{thm-alg-expconsequence}  using $m=0$ and $$X=\big([1,d-1]\setminus\{y_1,\ldots,y_s\}\big)\cup  \{k-y,\, k-y+y_1,\,\ldots,\, k-y+y_s\}$$  to conclude in view of \eqref{CCbus} that there is a zero-sum subsequence $T_{s+1}\mid (T_1\bdot\ldots\bdot T_s)^{[-1]}\bdot S$ with $|T_{s+1}|=y_{s+1}q$ for some $y_{s+1}\in [1,d-1]\setminus Y=[1,d-1]\setminus \{y_1,\ldots,y_s\}$. But now $\{y_1,\ldots,y_s,y_{s+1}\}$ contradicts the maximality of $Y$, completing the proof of the claim.
\end{proof}

Let $y=\frac{d(d-1)}{2}=\Summ{i\in [1,d-1]}i$, and let $X=[k-y,k-y+d-1]$. Since $k\geq \frac{d(d-1)}{2}+1$ by hypothesis, we have $X\subseteq \mathbb N$ and $|X|=d$. Since $k\leq p$, we have $[1,\max X]\setminus X=[1,k-y-1]\subseteq [1,p-1]$. In view of Claim A, we have $|(T_1\bdot\ldots\bdot T_{d-1})^{[-1]}\bdot S|=|S|-yq=(k-y+d)q-r$. As a result, we can apply Theorem \ref{thm-alg-expconsequence} to $(T_1\bdot\ldots\bdot T_{d-1})^{[-1]}\bdot S$ with $X=[k-y,k-y+d-1]$ and $m=0$ to conclude that
\be\label{tagyourit}0\in \Sigma_{[(k-y),k-y+d-1]\cdot q}\big((T_1\bdot\ldots\bdot T_{d-1})^{[-1]}\bdot S\big).\ee In view of Claim A, we have $0\in \Sigma_{tq}(T_1\bdot\ldots\bdot T_{d-1})$ for every $t\in [0,y]$, which combined with \eqref{tagyourit} implies that $0\in \Sigma_{kq}(S)$, contrary to assumption, completing  the proof.
\end{proof}

Next, we handle the main step in the proof of Theorem \ref{thm-sk-spec}.

\begin{proposition}\label{prop-small-d}
Let $G$ be a finite abelian $p$-group with exponent $q$,  let $d=\left \lceil \frac{\mathsf D^*(G)}{q}\right\rceil$. Suppose  $d\leq 4$ and $k$ is an integer with  $d\leq k\leq p$.  Then $$\mathsf s_{kq}(G)\leq kq+\mathsf D^*(G)-1.$$
\end{proposition}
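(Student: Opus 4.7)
The plan is to reduce to cases not already covered by Proposition~\ref{prop-large-d}. Since that proposition applies whenever $k \geq \frac{d(d-1)}{2}+1$, and since $d \leq 2$ forces $\frac{d(d-1)}{2}+1 \leq d$, the only cases left under the hypotheses $d \leq 4$ and $d \leq k \leq p$ are $(d,k)\in\{(3,3),(4,4),(4,5),(4,6)\}$. For each of these I would assume by contradiction a sequence $S$ of terms from $G$ with $|S|=kq+\mathsf{D}^*(G)-1=(k+d)q-r$ and $0 \notin \Sigma_{kq}(S)$, and work to derive a contradiction.

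The key combinatorial observation is that in each remaining case $k$ admits a decomposition into distinct parts from $[1,d-1]$, namely $3=1+2$, $4=1+3$, $5=2+3$, and $6=1+2+3$. Mirroring Claim A inside the proof of Proposition~\ref{prop-large-d}, my strategy is to iteratively apply Theorem~\ref{thm-alg-expconsequence} to construct disjoint zero-sum subsequences $T_1\bdot\ldots\bdot T_s \mid S$ with lengths $|T_i|/q$ realizing this decomposition; the concatenation $T_1\bdot\ldots\bdot T_s$ is then a zero-sum subsequence of $S$ of length $kq$, contradicting the assumption. At each stage I apply Theorem~\ref{thm-alg-expconsequence} to the current complement $S\bdot(T_1\bdot\ldots\bdot T_j)^{[-1]}$ with an $X$ of size $d$ chosen so that \eqref{det-hyp} is routinely satisfied (the products involved are small integers, with only finitely many prime exceptions treated separately). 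Each branching either produces a zero-sum of exactly the length needed to complete the target decomposition (giving immediate contradiction by concatenation), or yields an intermediate zero-sum permitting the iteration to continue.

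The hard part will be the ``stuck'' sub-cases in which iteration produces disjoint zero-sum subsequences whose lengths cannot be completed to $kq$, while the remaining sequence is simultaneously too short for further direct application of Theorem~\ref{thm-alg-expconsequence}. The prototype is the $(3,3)$ sub-case where two disjoint length-$2q$ zero-sums $T_1, T_2 \mid S$ are extracted: then $|S\bdot(T_1\bdot T_2)^{[-1]}| = 2q-r$ lies below every bound achievable for $|X|\geq d = 3$. My resolution is to apply Theorem~\ref{thm-alg-expconsequence} (and more sharply the congruences from Proposition~\ref{prop-altsum-q}) to the zero-sum concatenation $T_1\bdot T_2$ itself, which has length $4q \geq 4q-r$. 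Since $T_1\bdot T_2$ is a zero-sum of length $4q$, any zero-sum subsequence of $T_1\bdot T_2$ of length $q$ or $3q$ complements inside $T_1\bdot T_2$ to a zero-sum of length $3q$ in $S$, the desired contradiction.

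The remaining degenerate possibility---that Theorem~\ref{thm-alg-expconsequence} only produces length-$2q$ zero-sums inside $T_1\bdot T_2$---is ruled out by the congruence $N_{q}(T_1\bdot T_2) - N_{2q}(T_1\bdot T_2) + N_{3q}(T_1\bdot T_2) \equiv 2 \mod p$ arising from Proposition~\ref{prop-altsum-q} with $m=0$, $t=1$ (using $N_0(T_1\bdot T_2)=N_{4q}(T_1\bdot T_2)=1$), which under the assumption $N_{q}(T_1\bdot T_2)=N_{3q}(T_1\bdot T_2)=0$ forces $N_{2q}(T_1\bdot T_2) \geq p - 2$ and thereby produces length-$2q$ zero-sums $T' \subseteq T_1\bdot T_2$ distinct from both $T_1$ and $T_2$, whose overlaps with $T_1$ and $T_2$ then yield a length-$q$ zero-sum in $T_1\bdot T_2$ by a short combinatorial argument. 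The three cases $(4,4),(4,5),(4,6)$ will follow the same blueprint with analogous stuck sub-cases and resolutions; small primes where \eqref{det-hyp} fails for the preferred $X$ (notably $p=3$ in case $(3,3)$, where the congruences of Proposition~\ref{prop-altsum-q} applied to $S$ with $m=0$, $t=3$ already give $3(N_4-2N_5)\equiv-1\bmod p$ and hence $0\equiv -1\bmod 3$) are disposed of directly from Proposition~\ref{prop-altsum-q} applied to $S$.
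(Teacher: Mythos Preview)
Your reduction via Proposition~\ref{prop-large-d} to the four pairs $(d,k)\in\{(3,3),(4,4),(4,5),(4,6)\}$ is valid and is a genuine economy compared to the paper, which treats every $d\in\{1,2,3,4\}$ and every $k\in[d,p]$ from scratch. The intended endgame---building disjoint zero-sums whose lengths add to $kq$---is also reasonable in spirit.

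The real gap is your resolution of the ``stuck'' sub-case. In the $(3,3)$ prototype you obtain, from $N_{2q}(T_1\bdot T_2)\equiv -2\pmod p$, a third zero-sum $T'\mid T_1\bdot T_2$ of length $2q$ with $T'\neq T_1,T_2$, and you then assert that ``overlaps with $T_1$ and $T_2$ \ldots yield a length-$q$ zero-sum \ldots by a short combinatorial argument.'' Write $T'=A\bdot B$ with $A\mid T_1$ and $B\mid T_2$; all you know is $|A|+|B|=2q$, $1\le|A|,|B|\le 2q-1$, and $\sigma(A)=-\sigma(B)$. None of this forces $|A|=q$, nor $\sigma(A)=0$, nor does combining $A,B,T_1\setminus A,T_2\setminus B$ in any evident way produce a zero-sum of length exactly~$q$. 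Multiplicity of length-$2q$ zero-sums inside a length-$4q$ zero-sum does not, by any argument I can see, manufacture a length-$q$ zero-sum; this step needs a genuine idea (or a different route), and as written the proof is incomplete. The $(4,4),(4,5),(4,6)$ cases, left to ``the same blueprint,'' inherit the same difficulty.

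For comparison, the paper never tries to assemble $kq$ from a fixed partition. Instead it applies Theorem~\ref{thm-alg-expconsequence} to \emph{long} zero-sum subsequences $T\mid S$ (of length $(k+1)q$, $(k+2)q$, etc.), exploiting that inside a zero-sum $T$ every zero-sum of length $jq$ has a zero-sum complement of length $|T|-jq$. Choosing $X$ to contain both small elements (like $1,2$) and elements near $|T|/q$ then lets a single application rule out several lengths at once, and the argument proceeds by successively excluding $\Sigma_{q}(S)$, $\Sigma_{(k+1)q}(S)$, $\Sigma_{(k+2)q}(S)$, etc., until a final choice of $X$ on $S$ (or a short complement) yields a contradiction. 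That complementation trick is exactly what circumvents your stuck configuration, and importing it would essentially reproduce the paper's proof for these residual $(d,k)$.
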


\begin{proof}If $q=1$, then $G$ is trivial with $\mathsf s_{kq}(G)=kq=kq+\mathsf D^*(G)-1$, as desired. Therefore we can assume $q>1$.
Note that $d\geq 1$. Assume by contradiction that $S$ is a sequence of terms from $G$ with $$0\notin \Sigma_{kq}(S)\quad\und\quad |S|=kq+\mathsf D^*(G)-1=(k+d)q-r,$$  where  $r\in [1,q]$ is the integer such that $d=\frac{\mathsf D^*(G)+r-1}{q}$.

\textbf{Case 1:} $d=1$

Let $X=\{k\}$.
Since $1=d\leq k\leq p$, we have $X\subseteq \mathbb N$ and  $[1,\max X]\setminus X=[1,k-1]\subseteq [1,p-1]$, allowing us to  apply Theorem \ref{thm-alg-expconsequence} using $X=\{k\}$ and $m=0$ to conclude that $\mathsf s_{kq}(G)\leq kq+\mathsf D^*(G)-1$, as desired.

\textbf{Case 2:} $d=2$

Note that $k\geq d=2$.
Suppose  there is a zero-sum subsequence $T\mid S$ with $|T|=q$. Then $0\notin \Sigma_{kq}(S)$ ensures that
$0\notin \Sigma_{\{(k-1),k\}\cdot q}(T^{[-1]}\bdot S)$. Let  $X=\{k-1,k\}$.
  In view of $k\geq 2$, we have $X\subseteq \mathbb N$ and $|X|=2$. In view $k\leq p$, we have $[1,\max X]\setminus X=[1,k-2]\subseteq [1,p-2]$ and $|T^{[-1]}\bdot S|=(k+1)q-r$, allowing us to  apply Theorem \ref{thm-alg-expconsequence} to $T^{[-1]}\bdot S$ using $X=\{k,k-1\}$ and $m=0$ to conclude that $0\in\Sigma_{\{(k-2),k\}\cdot q}(T^{[-1]}\bdot S)$, contradicting that the opposite  was just shown. So we instead conclude that \be\label{0notin}0\notin \Sigma_{\{1,k\}\cdot q}(S).\ee
 Now let $X=\{1,k\}$. In view of $k\geq 2$, we have $X\subseteq \mathbb N$ and $|X|=2$. In view of $k\leq p$, we have $[1,\max X]\setminus X=[2,k-1]\subseteq [1,p-1]$ and $|S|\geq (k+1)q-r$, allowing us to apply Theorem \ref{thm-alg-expconsequence} to $S$ using $X=\{1,k\}$ and $m=0$ to conclude that $0\in \Sigma_{\{1,k\}\cdot q}(S)$, contrary to
\eqref{0notin}.

\textbf{Case 3:} $d=3$

Note that $k\geq d=3$. Suppose there is a zero-sum subsequence $T_1\mid S$ with $|T_1|=q$.  Then $0\notin \Sigma_{kq}(S)$ ensures that
$0\notin \Sigma_{\{(k-1),k\}\cdot q}(T_1^{[-1]}\bdot S)$. Let $X=\{1,k-1,k\}$. In view of $k\geq d=3$, we have $X\subseteq \mathbb N$ with $|X|=3$.  In view of  $k\leq p$, we have $[1,\max X]\setminus X=[2,k-2]\subseteq [2,p-2]$ and $|T_1^{[-1]}\bdot S|=(k+2)q-r$, allowing us to  apply Theorem \ref{thm-alg-expconsequence} using $X=\{1,k-1,k\}$ and $m=0$ to conclude that $0\in \Sigma_{\{1,(k-1),k\}\cdot q}(T_1^{[-1]}\bdot S)$, which in view of $0\notin \Sigma_{\{(k-1),k\}\cdot q}(T_1^{[-1]}\bdot S)$ means there is some zero-sum subsequence $T_2\mid T_1^{[-1]}\bdot S$ with $|T_2|=q$. But now
$0\notin \Sigma_{kq}(S)$  ensures that
$$0\notin \Sigma_{\{(k-2),(k-1),k\}\cdot q}(T_1^{[-1]}\bdot T_2^{[-1]}\bdot  S).$$ Now let $X=\{k-2,k-1,k\}$. Note $X\subseteq \mathbb N$ with $|X|=3=d$ in view of $k\geq d=3$. In view of $k\leq p$, we have $[1,\max X]\setminus X=[1,k-3]\subseteq [1,p-3]$ and $|T_1^{[-1]}\bdot T_2^{[-1]}\bdot  S|=(k+1)q-r$, allowing us to  apply Theorem \ref{thm-alg-expconsequence} using $X=\{k-2,k-1,k\}$ and $m=0$ to conclude that $0\in \Sigma_{\{(k-2),(k-1),k\}\cdot q}(T_1^{[-1]}\bdot T_2^{[-1]}\bdot  S)$, contrary to what was just noted.
So we instead conclude that \be\label{0notinb}0\notin \Sigma_{\{1,k\}\cdot q}(S).\ee

Suppose there is a zero-sum subsequence $T\mid S$ with $|T|=(k+2)q$. Let $X=\{1,k,k+1\}$. Then $X\subseteq \mathbb N$ with $|X|=3$ in view of $k\geq 2$. Since the complement of a zero-sum subsequence in $T$ is also zero-sum, we conclude from \eqref{0notinb} that $0\notin \Sigma_{\{1, k,(k+1)\}\cdot q}(T)$. In view of $k\leq p$, we have
$[1,\max X]\setminus X=[2,k-1]\subseteq [2,p-1]$ and $|T|=(k+2)q\geq (k+2)q-r$, allowing us to  apply Theorem \ref{thm-alg-expconsequence} using $X=\{1,k,k+1\}$ and $m=0$ to conclude that $0\in \Sigma_{\{1,k,(k+1)\}\cdot q}(T)$, contrary to what was just noted. So we instead conclude that \be\label{0notinc}0\notin \Sigma_{\{1,k,(k+2)\}\cdot q}(S).\ee

Now let $X=\{1,k,k+2\}$. Then $|S|=(k+3)q-r$ and $[1,\max X]\setminus X=[2,k-1]\cup \{k+1\}$. We also have  $k\leq p$. As a result, unless $p=k+1$, we can  apply Theorem \ref{thm-alg-expconsequence} using $X=\{1,k,k+2\}$ and $m=0$ to conclude that $0\in \Sigma_{\{1,k,(k+2)\}\cdot q}(S)$, contrary to \eqref{0notinc}. Therefore we must have $p=k+1\geq d+1=4$, whence $k+1=p\geq 5$ as $p$ is prime. In particular, $k\geq 4$.

Now let $X=\{1,2,k\}$. Note that $|X|=3$ in view of $k\geq 3$. In view of $k\leq p$, we have  $[1,\max X]\setminus X=[3,k-1]\subseteq [3,p-1]$ and $|S|=(k+3)q-r$, allowing us to apply Theorem \ref{thm-alg-expconsequence} using $X=\{1,2,k\}$ and $m=0$ to conclude that $0\in \Sigma_{\{1,2,k\}\cdot q}(S)$, which in view of \eqref{0notinc} implies that there is a zero-sum subsequence $T\mid S$ with $|T|=2q$.
But now $0\notin \Sigma_{kq}(S)$ ensures that $0\notin \Sigma_{(k-2)q}(T^{[-1]}\bdot S)$.
Thus \eqref{0notinc} yields \be\label{notind}0\notin \Sigma_{\{1,(k-2),k\}\cdot q}(T^{[-1]}\bdot S).\ee

Now let $X=\{1,k-2,k\}$. In view of $k\geq 4$, we have $X\subseteq \mathbb N$ and  $|X|=3$. In view of $k\leq p$, we have  $[1,\max X]\setminus X\subseteq [2,k-1]\subseteq [2,p-1]$ and $|T^{[-1]}\bdot S|=(k+1)q-r$, allowing us to apply Theorem \ref{thm-alg-expconsequence} using $X=\{1,k-2,k\}$ and $m=0$ to conclude that $0\in \Sigma_{\{1,(k-2),k\}\cdot q}(T^{[-1]}\bdot S)$, contrary to \eqref{notind}.

\textbf{Case 4:} $d=4$.

Note that $k\geq d=4$. We divide the proof into five subcases. Note, since $p$ is prime, that $k=5$ and $p=k+1$ cannot both hold, ensuring all possibilities are covered.

\smallskip

CASE 4.1: $0\notin \Sigma_{\{1,2\}\cdot q}(S)$.

Suppose there is a  zero-sum subsequence $T\mid S$ with $|T|=(k+1)q$. Then, since the complement of zero-sum subsequence of $T$ is also zero-sum, it follows from the subcase hypothesis $0\notin \Sigma_{\{1,2\}\cdot q}(S)$ that  $0\notin \Sigma_{\{1,2,(k-1),k\}\cdot q}(T)$. Let $X=\{1,2,k-1,k\}$. Since $k\geq 4$, we have $X\subseteq \mathbb N$ and $|X|=4$. In view of $k\leq p$, we have $[1,\max X]\setminus X=[3,k-2]\subseteq [3,p-2]$ and $|T|=(k+1)q\geq (k+1)q-r$, allowing us to apply Theorem \ref{thm-alg-expconsequence} to $T$ with $X=\{1,2,k-1,k\}$ and $m=0$ to conclude that $0\in\Sigma_{\{1,2,(k-1),k\}\cdot q}(T)$, contrary to what was just noted.
So we instead conclude that \be\label{vultA}0\notin \Sigma_{\{1,2,k,(k+1)\}\cdot q}(S).\ee
 Now let $X=\{1,2,k,k+1\}$. Since $k\geq 3$, we have $X\subseteq \mathbb N$ and $|X|=4$. In view of $k\leq p$, we have $[1,\max X]\setminus X=[3,k-1]\subseteq [3,p-1]$ and $|S|=(k+4)q-r$. But now Theorem \ref{thm-alg-expconsequence} applied to $S$ with $X=\{1,2,k,k+1\}$ and $m=0$ yields $0\in\Sigma_{\{1,2,k,(k+1)\}\cdot q}(S)$, contrary to \eqref{vultA}.

\smallskip

CASE 4.2: There exists disjoint subsequences $T_1\bdot T_2\mid S$ with $|T_1|=q$, $|T_2|=2q$, and $T_1$ and $T_2$ each zero-sum.

In this case, there are zero-sum subsequences of $T_1\bdot T_2$ having lengths $q$, $2q$ and also $3q$. As a result, since $0\notin \Sigma_{kq}(S)$, we have $$0\notin \Sigma_{\{(k-3),(k-2),(k-1),k\}\cdot q}(T_1^{[-1]}\bdot T_2^{[-1]}\bdot S).$$
Let $X=[k-3,k]$. Since $k\geq d=4$, we have $X\subseteq \mathbb N$ and $|X|=4$. Since $k\leq p$, we have $[1,\max X]\setminus X=[1,k-4]\subseteq [1,p-4]$. Hence, since $|T_1^{[-1]}\bdot T_2^{[-1]}\bdot S|=(k+1)q-r$, we can apply Theorem \ref{thm-alg-expconsequence} to $T_1^{[-1]}\bdot T_2^{[-1]}\bdot S$ with $X=[k-3,k]$ and $m=0$ to conclude that $0\in\Sigma_{\{(k-3),(k-2),(k-1),k\}\cdot q}(T_1^{[-1]}\bdot T_2^{[-1]}\bdot S)$, contrary to what was just noted.

\smallskip

CASE 4.3: $0\in \Sigma_{q}(S)$.

Let $T_1\mid S$ be a zero-sum subsequence with $|T_1|=q$, which exists by subcase hypothesis.
In view of  CASE 4.2 and $0\notin \Sigma_{kq}(S)$, we can assume  \be\label{catch}0\notin \Sigma_{\{2,(k-1),k\}\cdot q}(T_1^{[-1]}\bdot S).\ee

Suppose there is a zero-sum subsequence $T\mid T_1^{[-1]}\bdot S$ with $|T|=(k+1)q$. Then, since the complement of a zero-sum subsequence of $T$ is also zero-sum, it follows from \eqref{catch} that $0\notin\Sigma_{\{1, 2,(k-1),k\}}(T)$. Let $X=\{1,2,k-1,k\}$. Since $k\geq d= 4$, we have $X\subseteq \mathbb N$ and $|X|=4$. Since $p\geq k$, we have $[1,\max X]\setminus X=[3,k-2]\subseteq [3,p-2]$. Hence, since $|T|=(k+1)q\geq (k+1)q-r$, we can apply Theorem \ref{thm-alg-expconsequence} to $T$ with $X=\{1,2,k-1,k\}$ and $m=0$ to conclude that $0\in\Sigma_{\{1,2,(k-1),k\}\cdot q}(T)$, contrary to what was just noted. So we instead conclude that $0\notin \Sigma_{(k+1)q}(T_1^{[-1]}\bdot S)$, which along with \eqref{catch} ensures that  \be\label{catchB}0\notin \Sigma_{\{2,(k-1),k,(k+1)\}\cdot q}(T_1^{[-1]}\bdot S).\ee

Now let $X=\{2,k-1,k,k+1\}$. Since $k\geq d=4$, we have $X\subseteq \mathbb N$ and $|X|=4$. Since $p\geq k$, we have $[1,\max X]\setminus X=\{1\}\cup [3,k-2]\subseteq [1,p-2]$. Hence, since $|T_1^{[-1]}\bdot S|=(k+3)q-r$, we can apply Theorem \ref{thm-alg-expconsequence} to $T_1^{[-1]}\bdot S$ with $X=\{2,k-1,k,k+1\}$ and $m=0$ to conclude that $0\in\Sigma_{\{2,(k-1),k,(k+1)\}\cdot q}(T)$, contrary to  \eqref{catchB}.

\smallskip

CASE 4.4: $p\neq k+1$.

We have $0\notin\Sigma_{kq}(S)$ and can assume $0\notin \Sigma_{q}(S)$ in view of CASE 4.3.

Suppose there is a zero-sum subsequence $T\mid S$ with $|T|=tq$ for some $t\in [k+2,k+3]$. Then, since $0\notin \Sigma_{\{1,k\}\cdot q}(S)$ with the complement of a zero-sum subsequence in $T$ also zero-sum, it follows that \be\nn 0\notin \Sigma_{\{1,(t-k), k,(t-1)\}\cdot q}(T).\ee Let $X=\{1,t-k,k,t-1\}$. Since $k\geq d=4$ and $k+2\leq t\leq k+3<2k$, we have $X\subseteq \mathbb N$ and $|X|=4$. If $t=k+2$, then  $[1,\max X]\setminus X=[3,k-1]$. If $t=k+3$, then $[1,\max X]\setminus X=\{2\}\cup [4,k-1]\cup \{k+1\}$. In either case, since $p\geq k$ with $p\neq k+1$ (by subcase hypothesis), it follows in view of $|T|=tq\geq tq-r$ that we can apply Theorem \ref{thm-alg-expconsequence} to $T$ with $X=\{1,t-k,k,t-1\}$ and $m=0$ to conclude that $0\in\Sigma_{\{1,(t-k),k,(t-1)\}\cdot q}(T)$, contrary to  what was noted above. So we instead conclude that $0\notin \Sigma_{\{(k+2),(k+3)\}\cdot q}(S)$, which along with the already noted fact that $0\notin \Sigma_{\{1,k\}\cdot q}(S)$ means
\be\label{stuffilb}0\notin \Sigma_{\{1,k,(k+2),(k+3)\}\cdot q}(S).\ee

Now let $X=\{1,k,k+2,k+3\}$. Since $k\geq d=4$, we have $X\subseteq \mathbb N$ and $|X|=4$. Since $p\geq k$, we have $[1,\max X]\setminus X=[2,k-1]\cup \{k+1\}$. We also have $p\geq k$ with $p\neq k+1$ by subcase hypothesis. Hence, since $|S|=(k+4)q-r$, we can  apply Theorem \ref{thm-alg-expconsequence} to $S$ with $X=\{1,k,k+2,k+3\}$ and $m=0$ to conclude that $0\in\Sigma_{\{1,k,(k+2),(k+3)\}\cdot q}(S)$, contrary to  \eqref{stuffilb}.

\smallskip

CASE 4.5: $k\neq 5$.

In view of CASES 4.1 and 4.3, we can assume there is a zero-sum subsequence $T_2\mid S$ with $|T_2|=2q$. Then, since $0\notin \Sigma_{kq}(S)$, it follows in view of CASE 4.3 that \be\label{tiltA}0\notin \Sigma_{\{1,(k-2),k\}\cdot q}(T_2^{[-1]}\bdot S).\ee

Suppose there is a zero-sum subsequence $T\mid T_2^{[-1]}\bdot S$ with $|T|=(k+1)q$. Then, since the complement of a zero-sum subsequence in $T$ is also zero-sum, it follows from \eqref{tiltA} that $$0\notin\Sigma_{\{1,3,(k-2),k\}\cdot q}(T).$$ Let $X=\{1,3,k-2,k\}$. Since $k\geq d=4$ and $k\neq 5$ (in view of the subcase hypothesis), we have $X\subseteq \mathbb N$ and $|X|=4$. Since $p\geq k$, we have $[1,\max X]\setminus X\subseteq [2,k-1]\subseteq [2,p-1]$. Hence, since $|T|=(k+1)q\geq (k+1)q-r$, we can apply Theorem \ref{thm-alg-expconsequence} to $T$ with $X=\{1,3,k-2,k\}$ and $m=0$ to conclude that $0\in\Sigma_{\{1,3,(k-2),k\}\cdot q}(T)$, contrary to  what was noted above.
So we can now assume $0\notin \Sigma_{(k+1)q}(T_2^{[-1]}\bdot S)$, which together with \eqref{tiltA} means
\be\label{tiltB}0\notin \Sigma_{\{1,(k-2),k,(k+1)\}\cdot q}(T_2^{[-1]}\bdot S).\ee

Now let $X=\{1,k-2,k,k+1\}$. In view of $k\geq d=4$, we have $X\subseteq \mathbb N$ and $|X|=4$. In view of $p\geq k$, we have $[1,\max X]\setminus X=[2,k-3]\cup \{k-1\}\subseteq [2,p-1]$. Hence, since $|T_2^{[-1]}\bdot S|=(k+2)q-r$, we can apply Theorem \ref{thm-alg-expconsequence} to $T_2^{[-1]}\bdot S$ with $X=\{1,k-2,k,k+1\}$ and $m=0$ to conclude that $0\in\Sigma_{\{1,(k-2),k,(k+1)\}\cdot q}(T)$, contrary to \eqref{tiltB}, which completes the proof.
\end{proof}


The means of transferring Propositions \ref{prop-small-d} and \ref{prop-large-d} into Theorems \ref{thm-sk-spec} and \ref{thm-sk-gen} is the following simple lemma.

\begin{lemma}
\label{lem-transfer}
Let $G$ be a finite abelian $p$-group with exponent $q$,  let $d=\left \lceil \frac{\mathsf D^*(G)}{q}\right\rceil$, and let $k_0\geq 1$. Suppose  $\mathsf s_{kq}(G)\leq kq+\mathsf D^*(G)-1$ for all $k\in[k_0,2k_0-1]$. Then $$\mathsf s_{kq}(G)\leq kq+\mathsf D^*(G)-1 \quad \mbox{ for all $k\geq k_0$}.$$
\end{lemma}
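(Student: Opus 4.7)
The plan is to proceed by strong induction on $k\geq k_0$, using a straightforward ``doubling'' or ``chain extraction'' argument. The base cases $k\in[k_0,2k_0-1]$ are handled directly by hypothesis. For the inductive step, the idea is: given a long sequence, first peel off a zero-sum subsequence of length $k_0q$ (using the smallest instance of the hypothesis), and then apply the inductive hypothesis to the remainder to pick up the missing $(k-k_0)q$ terms. Concatenating the two zero-sum pieces yields the desired zero-sum subsequence of length $kq$.

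In more detail: let $k\geq 2k_0$ and suppose the bound $\mathsf s_{k'q}(G)\leq k'q+\mathsf D^*(G)-1$ holds for every $k'\in[k_0,k-1]$. Let $S$ be any sequence of terms from $G$ with $|S|\geq kq+\mathsf D^*(G)-1$. Since
$$|S|\geq kq+\mathsf D^*(G)-1\geq k_0q+\mathsf D^*(G)-1\geq \mathsf s_{k_0q}(G),$$
by the hypothesis in the case $k'=k_0$, the sequence $S$ contains a zero-sum subsequence $T_1\mid S$ with $|T_1|=k_0q$. Removing it, the residual sequence $S\bdot T_1^{[-1]}$ satisfies
$$|S\bdot T_1^{[-1]}|\geq (k-k_0)q+\mathsf D^*(G)-1.$$
Because $k\geq 2k_0$, we have $k-k_0\in[k_0,k-1]$, so the inductive hypothesis (either as a base case or as an already established instance) yields $\mathsf s_{(k-k_0)q}(G)\leq (k-k_0)q+\mathsf D^*(G)-1$. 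Thus $S\bdot T_1^{[-1]}$ contains a zero-sum subsequence $T_2$ with $|T_2|=(k-k_0)q$, and $T_1\bdot T_2$ is then a zero-sum subsequence of $S$ of length $kq$. This shows $\mathsf s_{kq}(G)\leq kq+\mathsf D^*(G)-1$, completing the induction.

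There is no real obstacle here; the only thing to verify carefully is the arithmetic ensuring $k-k_0\geq k_0$ (so the inductive hypothesis applies) and the length book-keeping after removing $T_1$. Both are immediate from $k\geq 2k_0$.
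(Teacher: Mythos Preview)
Your proof is correct and follows essentially the same approach as the paper. The only cosmetic difference is that the paper avoids induction by writing $k=mk_0+r$ with $r\in[k_0,2k_0-1]$ up front, then peels off $m$ zero-sum blocks of length $k_0q$ in succession before extracting a final block of length $rq$; your strong induction simply unwinds to this same extraction procedure.
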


\begin{proof}
Let $k\geq k_0$ be arbitrary. Write $k=mk_0+r$ with $m\geq 0$ and $r\in[k_0,2k_0-1]$. Let $S$ be a sequence of terms from $G$  with $|S|=kq+\mathsf D^*(G)-1\geq mk_0q+\mathsf D^*(G)-1$. We need to show $0\in\Sigma_{kq}(S)$. By repeated application of the definition of $\mathsf s_{k_0q}(G)\leq k_0q+\mathsf D^*(G)-1$, we can find subsequences $T_1\bdot\ldots\bdot T_m\mid S$ such that each  $T_i$ is zero-sum with $|T_i|=k_0q$, for $i\in[1,m]$. But now $$|(T_1\bdot\ldots\bdot T_m)^{[-1]}\bdot S|=|S|-mk_0q=rq+\mathsf D^*(G)-1,$$ so applying the definition of $\mathsf s_{rq}(G)\leq rq+\mathsf D^*(G)-1$ to $(T_1\bdot\ldots\bdot T_m)^{[-1]}\bdot S$, we find another zero-sum subsequence $T_0\mid (T_1\bdot\ldots\bdot T_m)^{[-1]}\bdot S$ with $|T_0|=r\in[k_0,2k_0-1]$, and now $T=T_0\bdot T_1\bdot\ldots\bdot T_m$ is a zero-sum subsequence of $S$ with $|T|=(mk_0+r)q=kq$, as desired.
\end{proof}

We conclude with the proofs for both results regarding $\mathsf s_{k\exp(G)}(G)$.

\begin{proof}[Proof of Theorem \ref{thm-sk-spec}]
Let $k_0=d$. Since $p\geq 2d-1$, we have $p\geq k$ for every $k\in[k_0,2k_0-1]=[d,2d-1]$. Thus Proposition \ref{prop-small-d} implies that $\mathsf s_{kq}(G)\leq kq+\mathsf D^*(G)-1$ for every $k\in[k_0,2k_0-1]$, and the result now follows by applying Lemma \ref{lem-transfer}.
\end{proof}

\begin{proof}[Proof of Theorem \ref{thm-sk-gen}]
Let $k_0=\frac{d(d-1)}{2}+1$. Since $p\geq d^2-d+1$, we have $p\geq k$ for every $k\in[k_0,2k_0-1]=[\frac{d(d-1)}{2}+1,d^2-d+1]$. Thus Proposition \ref{prop-large-d} implies that $\mathsf s_{kq}(G)\leq kq+\mathsf D^*(G)-1$ for every $k\in[k_0,2k_0-1]$, and the result now follows by applying Lemma \ref{lem-transfer}.
\end{proof}


\begin{thebibliography}{99}

\bibitem{Aichinger-chewarning}  E. Aichinger and J. Moosbauer,    Chevalley-Warning type results on abelian groups, \emph{J. Algebra} 569 (2021), 30--66.

\bibitem{alon-dubiner-org}
 N. Alon and M. Dubiner, Zero-sum sets of prescribed size, in \emph{Combinatorics, Paul Erd\H{o}s is Eighty}, J\'anos Bolyai Math. Soc., Budapest, 1993, 33--50.

 \bibitem{alon-dub-linbound}
N. Alon and M. Dubiner, A lattice point problem and additive number theory, \emph{Combinatorica} 15 (1995), 301--309.

\bibitem{Ax} J. Ax, Zeroes of polynomials over finite fields, \emph{Amer. J. Math.} 86 (1964), 255--261.


\bibitem{clark-gen-che-warning}  I. Baoulina, A. Bishnoi and P.  Clark,  A generalization of the theorems of Chevalley-Warning and Ax-Katz via polynomial substitutions, \emph{Proc. Amer. Math. Soc.} 147 (2019), no. 10, 4107--4122.

\bibitem{bitz-skq-lowerbounds}  J. Bitz, S.  Griffith and  Xiaoyu He, Exponential lower bounds on the generalized Erdős-Ginzburg-Ziv constant, \emph{Discrete Math.} 343 (2020), no. 12, 112083, 4 pp.


\bibitem{brink-chevwarning-restricted}  D. Brink, Chevalley's theorem with restricted variables, \emph{Combinatorica} 31 (2011), no. 1, 127--130.

\bibitem{int-poly-survey} Paul-Jean Cahen and Jean-Luc Chabert, \emph{Integer Valued Polynomials}, Mathematical Surveys and Monographs 48, American Mathematical Society (1997).

\bibitem{cao-ax-katz-gen} Wei Cao, A partial improvement of the Ax–Katz theorem, \emph{J. Number Theory} 132 (2012), 485--494.

\bibitem{cao-sun-ax-katz-variant}  Wei Cao and Qi Sun, Improvements upon the Chevalley-Warning-Ax-Katz-type estimates, \emph{J. Number Theory} 122 (2007), no. 1, 135--141.


\bibitem{castro-ax-katz-var}  F. N. Castro, O. Moreno an I. Rubio, An improvement of a theorem of Carlitz, \emph{J. Pure Appl. Algebra} 224 (2020), no. 5, 106246, 7 pp.

\bibitem{chevalley} C. Chevalley, D\'emonstration d’une hypoth\`ese de M. Artin, \emph{Abh. Math. Sem.
Hamburg} 11 (1936), 73--75.

\bibitem{clark-boundary-chev-warning}  P. Clark, T. Genao and  F. Saia, Chevalley-Warning at the boundary, \emph{ Expo. Math.} 39 (2021), no. 4, 604--623.

\bibitem{clark-warning-restr}  P. Clark, A. Forrow and J. R.  Schmitt,  Warning's second theorem with restricted variables, \emph{Combinatorica} 37 (2017), no. 3, 397--417.

\bibitem{Dickson-fleckref} L. E. Dickson, \emph{History of the Theory of Numbers}, Vol. I, AMS Chelsea Publ.,
1999.


\bibitem{deboas-dav-pgroup} P. van Emde Boas and D. Kruyswijk, A Combinatorial Problem on Finite Abelian Groups III, \emph{Math. Centrum Amsterdam Afd. Zuivere Wisk} (1969), ZW-009.

\bibitem{Fleck} A. Fleck, Sitzungs, \emph{Berlin Math. Gesell.} 13 (1913--1914), 2--6.


\bibitem{Gao-skq-basicell} Weidong Gao, On zero-sum subsequences of restricted size II, \emph{Discrete Math.} 271 (2003),
no. 1--3, 51--59.

\bibitem{gao-han-skq-specialcase-var} Weidong Gao,  Dongchun Han, Jiangtao Peng and Fang  Sun, On zero-sum subsequences of length $k\exp(G)$, \emph{J. Combin. Theory Ser. A} 125 (2014), 240--253.

\bibitem{Gao-inverse-skq} Weidong Gao, Siao Hong and  Jiangtao Peng, On zero-sum subsequences of length $k\exp(G)$ II, \emph{J. Combin. Theory Ser. A} 187 (2022), Paper No. 105563, 34 pp.


\bibitem{Gao-thanga-skq-lowk}  Weidong Gao and  R. Thangadurai, On zero-sum sequences of prescribed length, \emph{Aequationes Math.} 72 (2006), no. 3, 201--212.


\bibitem{garrett-algtext} P. Garrett, \emph{Abstract Algebra}, Chapman \& Hall/CRC (2008).

\bibitem{alfred-book}  A. Geroldinger and F. Halter-Koch, \emph{Non-Unique Factorizations: Algebraic, Combinatorial and Analytic Theory}, Pure and Applied Mathematics 278, Chapman \& Hall/CRC (2006).


\bibitem{G-book} D. J. Grynkiewicz, \emph{Structural Additive Theory}, Developments in Mathematics 30, Springer (2013).

\bibitem{han-skq-rank4}  Dongchun Han and Hanbin Zhang, On zero-sum subsequences of prescribed length, \emph{Int. J. Number Theory} 14 (2018), no. 1, 167--191.

\bibitem{he-skq}   Xiaoyu He, Zero-sum subsequences of length $kq$ over finite abelian $p$-groups, \emph{Discrete Math.} 339 (2016), no. 1, 399--407.

\bibitem{Hou-reduction-arg-ax-katz} Xiang-Dong Hou, A note on the proof of a theorem of Katz, \emph{Finite Fields Appl.} 11
(2005), 316--319.

\bibitem{Katz} N. Katz, On a theorem of Ax, \emph{Amer. J. Math.} 93 (1971), 485--499.


\bibitem{kemnitz-originalpaper} A. Kemnitz, On a lattice point problem, \emph{Ars Combinatoria} 16b (1983), 151--160.

\bibitem{kubertin-skq} S. Kubertin,  Zero-sums of length $kq$ in $\Z_q^d$, \emph{ Acta Arith.} 116 (2005), no. 2, 145--152.

\bibitem{moreno-moreno} O. Moreno and C. J. Moreno, Improvements of the Chevalley-Warning and the
Ax-Katz theorem, \emph{Amer. J. Math.} 117 (1995), 241--244.


\bibitem{moreno-shum-chev-warinng-var}  O. Moreno, K.  Shum, F. N. Castro, V. P. Kumar,  Tight bounds for Chevalley-Warning-Ax-Katz type estimates, with improved applications, \emph{Proc. London Math. Soc.} (3) 88 (2004), no. 3, 545--564.


\bibitem{nat-book} M. Nathanson, \emph{Additive Number Theory: Inverse Problems and the Geometry of Sumsets}, Graduate Textbooks in Mathematics 165, Springer (1991).

\bibitem{niven-zuck-mont-nr-text} I. Niven, H. Zuckerman and H. Montgomery, \emph{An Introduction to the Theory of Numbers, 5th edition}, John Wiley \& Sons, Inc. (1991).


\bibitem{olson-dav-pgroups} J. E. Olson, A Combinatorial Problem on Finite Abelian Groups I, \emph{J. Number Theory} 1 (1969), 8--10.

\bibitem{reiher-kemnitz-conj} C. Reiher,  On Kemnitz' conjecture concerning lattice-points in the plane, \emph{Ramanujan J.} 13 (2007), no. 1--3, 333--337.

\bibitem{ronyai-kemnitz} L. R\'onyai,
On a conjecture of Kemnitz,
\emph{Combinatorica} 20 (2000), no. 4, 569--573.


\bibitem{sav-chev-kemnitz-fiore}
S. Savchev and F. Chen, Note
Kemnitz’ conjecture revisited, \emph{Discrete Math.} 297 (2005), 196--201.

\bibitem{Sun-Ax-Katz-Wilson} Zhi-Wei Sun, Extensions of Wilson's lemma and the Ax-Katz Theorem, unpublished (2006),  arXiv:math.NT/0608560.

\bibitem{Sun-Wan-elem-weight-weis-fleck} Zhi-Wei Sun and Daqing Wan, Lucas type congruences for cyclotomic $\Psi$-coefficients, \emph{Int. J. Number Theory} 4 (2008), no. 2, 155--170.

\bibitem{Tao-Vu-book} T. Tao and V. Vu, \emph{Additive Combinatorics}, Cambridge Studies in Advanced Mathematics 105, Cambridge University Press (2010).

\bibitem{Wan-weighted-weisman-fleck} Daqing Wan, Combinatorial congruences and $\Psi$-operators, \emph{Finite Fields Appl.} (2006), no. 4, 693--703.

\bibitem{Wan-altproof-Ax-Katz} Daqing Wan, An elementary proof of a theorem of Katz, \emph{Amer. J. Math.} 111 (1989),
1--8.

\bibitem{wan-altproof-Ax-katzII} Daqing Wan, A Chevalley-Warning approach to  $p$-adic estimates of character sums,
\emph{Proc. Amer. Math. Soc.} 123 (1995), 45--54.


\bibitem{warning} E. Warning, Bemerkung zur vorstehenden Arbeit von Herrn Chevalley, \emph{Abh.
Math. Sem. Hamburg} 11 (1936), 76--83.

\bibitem{weisman} C. S. Weisman, Some congruences for binomial coefficients, \emph{Michigan Math. J.}
24 (1977), 141--151

\bibitem{Wilson-ax-katz} R. Wilson,
A lemma on polynomials modulo $p^m$ and applications to coding theory,
\emph{Discrete Math.} 306 (2006), no. 23, 3154--3165.



\end{thebibliography}
\end{document}